\documentclass[11pt]{amsart}
\usepackage{amsmath}
\usepackage{amssymb}
\usepackage{amsthm}
\usepackage{amscd}
\usepackage{amsxtra}
\usepackage{verbatim}
\usepackage{xcolor}
\usepackage{color}
\usepackage{enumerate}
\usepackage{mathrsfs}

\allowdisplaybreaks

\numberwithin{equation}{section}

\definecolor{dblue}{rgb}{0,0,0.45}
\definecolor{red}{rgb}{0.7,0,0}

 \RequirePackage{geometry}
 \geometry{twoside,
 paperwidth=210mm,
 paperheight=297mm,
 textheight=622pt,
 textwidth=468pt,
 centering,
 headheight=50pt,
 headsep=12pt,
 footskip=18pt,
 footnotesep=24pt plus 2pt minus 12pt,
 columnsep=2pc
 }

\newtheorem{theorem}{Theorem}[section]
\newtheorem{lemma}[theorem]{Lemma}
\newtheorem*{lemma*}{Lemma}

\newtheorem{proposition}[theorem]{Proposition}

\theoremstyle{definition}

\newtheorem{remark}[theorem]{Remark}

\theoremstyle{remark}

\newcommand{\N}{{\mathbb N}}
\newcommand{\R}{{\mathbb R}}

\begin{document}

\title{Conditions for Boundedness into Hardy spaces}

\author[Grafakos]{Loukas Grafakos}
\address{Department of Mathematics, University of Missouri, Columbia, MO 65211}
\email{grafakosl@missouri.edu}

\author[Nakamura]{Shohei Nakamura}
\address{Department of Mathematical Science and Information Science, Tokyo Metropolitan University, 1-1 Minami-Ohsawa, Hachioji, Tokyo, 192-0397, Japan}
\email{pokopoko9131@icloud.com}

\author[Nguyen]{Hanh Van Nguyen}
\address{Department of Mathematics, University of Alabama, Tuscaloosa, AL 35487}
%\address{Department of Mathematics, University of Missouri, Columbia, MO 65211}
%\address{Department of Mathematics, Hue College of Education, Hue, Vietnam}
\email{hvnguyen@ua.edu}

\author[Sawano]{Yoshihiro Sawano}
\address{Department of Mathematical Science and Information Science, Tokyo Metropolitan University, 1-1 Minami-Ohsawa, Hachioji, Tokyo, 192-0397, Japan}
\email{yoshihiro-sawano@celery.ocn.ne.jp}

\thanks{The first author would like to thank the Simons Foundation.}
\thanks{MSC 42B15, 42B30}%42B30 duplicated.
%Check here: http://www.ams.org/msc/msc2010.html?t=42B37

\begin{abstract}
We obtain boundedness from a product of Lebesgue or Hardy spaces into
Hardy spaces under suitable cancellation conditions for a large class of multilinear operators that includes the
Coifman-Meyer class, sums of products of linear Calder\'{o}n-Zygmund operators and combinations of these two types. 
\end{abstract}

\maketitle

\tableofcontents

%=============================================================%
\section{Introduction}
%=============================================================%

In this work, we obtain boundedness for 
multilinear singular operators of various types 
from products of Lebesgue or Hardy spaces into
Hardy spaces, under suitable cancellation conditions. 
This particular line of investigation 
 was initiated in the work of Coifman, Lions, Meyer and Semmes \cite{CLMS} 
who showed that certain bilinear operators with vanishing integral map $L^q\times L^{q'} $ 
into the Hardy space $H^1$ for $1<q<\infty$ with $q'=q/( q-1)$. 
This result was extended by Dobyinksi \cite{Do} for   Coifman-Meyer multiplier operators and by 
 Coifman and Grafakos \cite{CG} for finite sums of products of Calder\'{o}n-Zygmund operators. 
In \cite{CG} boundedness was extended to
 $H^{p_1}\times H^{p_2}\to H^p$ for the entire range 
 $0<p_1,p_2,p<\infty$ and $1/p=1/p_1+1/p_2$, under the necessary cancellation 
 conditions. 
 
 Additional proofs of these results were provided by Grafakos and Li \cite{GLLXW00}, Hu and Meng \cite{HuMe12}, and Huang and Liu \cite{HuangLiu13}. 
All the aforementioned accounts on this topic are based on different approaches and address two classes of 
 operators but \cite{CG}, \cite{HuMe12}, and \cite{HuangLiu13} seem to contain flaws in their proofs; 
in fact, as of this writing, only the approach in \cite{GLLXW00} stands, 
which deals with the case of finite sums of products of Calder\'{o}n-Zygmund operators. 
In this work we revisit this line of investigation 
via a new method based on $(p,\infty)$-atomic decompositions. 
Our approach is powerful enough to encompass many types of multilinear operators 
that include all the previously studied (Coifman-Meyer type and finite sums of products of Calder\'on-Zygmund operators) as well as mixed types. An alternative approach to Hardy space estimates for bilinear operators has    appeared in the recent work of
Hart and Lu \cite{HartLu}.

Recall that the Hardy space $H^p$ with $0<p<\infty$
is given as the space of all tempered distributions $f$ for which
\[
\|f\|_{H^p}=\big\|\sup_{t>0}|e^{t\Delta}f|\big\|_{L^p}
\]
is finite,
where $e^{t\Delta}$ denotes the heat semigroup
for $0<p \le \infty$.
Note that $H^p$ and $L^p$ are isomorphic with norm equivalence
when $1<p \le \infty$.

In this work we study the boundedness into $H^p$ of the following three types of operators:
\begin{itemize}
\item
multilinear singular integral operators of Coifman-Meyer type;
\item
sums of $m$-fold products of linear Calder\'{o}n-Zygmund singular integrals;
\item
multilinear singular integrals of mixed type (i.e., combinations of the previous two types).
\end{itemize}

Let $m,n$ be positive integers. 
For a bounded function $\sigma$ on $({\mathbb R}^n)^m$ 
we consider the multilinear operator 
\[
{\mathcal T}_\sigma(f_1,\ldots,f_m)(x)
=
\int_{({\mathbb R}^n)^m}
\sigma(\xi_1,\ldots,\xi_m)
\widehat{f_1}(\xi_1)\cdots\widehat{f_m}(\xi_m)
e^{2\pi  i x\cdot (\xi_1+\cdots+\xi_m)}
\,d\xi_1\cdots\,d\xi_m
\quad (x \in {\mathbb R}^n)
\]
for $f_1,\ldots,f_m \in {\mathscr S}$. 
Here $\mathscr S$ is the space of Schwartz functions 
and 
$\widehat f (\xi) = \int_{\mathbb R^n} f(x) e^{-2\pi i x\cdot \xi}dx$ 
is the Fourier transform of a given 
Schwartz function $f$ on $\mathbb R^n$. The space of tempered distributions is denoted by ${\mathscr S}'$.

Certain conditions on $\sigma$ imply that 
${\mathcal T}_\sigma$
extends to a bounded linear operator
from
$L^{p_1} \times \cdots \times L^{p_m}$
to
$L^p$
as long as 
$1<p_1,\ldots,p_m \le \infty$
and $0<p<\infty$
satisfies
\begin{equation}\label{Holder}
\frac{1}{p} = \frac{1}{p_1}+\cdots+\frac{1}{p_m}.
\end{equation}

Such a condition is the following by Coifman-Meyer (modeled after the classical Mihlin linear multiplier condition) 
\begin{equation}\label{CMcond}
|\partial^\alpha \sigma(\xi_1,\ldots,\xi_m)|
\lesssim
(|\xi_1|+\cdots+|\xi_m|)^{-|\alpha|},
\quad (\xi_1,\ldots,\xi_m) \in ({\mathbb R}^n)^m \setminus \{0\}
\end{equation}
for $\alpha \in ({\mathbb N}_0{}^n)^m$ satisfying 
$|\alpha| \le M$ for some large $M$.
Such operators are  called $m$-linear Calder\'{o}n-Zygmund operators and there is a rich theory for them analogous to the linear one.

An $m$-linear Calder\'{o}n-Zygmund operator associated 
with a Calder\'{o}n-Zygmund kernel $K$ on $\mathbb{R}^{mn}$ is defined by
\begin{equation}\label{eq.CalZygOPT}
 {\mathcal T}_\sigma(f_1,\ldots,f_m)(x) 
= \int_{({\mathbb R}^n)^m} K(x-y_1,\ldots,x-y_m)
f_1(y_1)\cdots f_m(y_m)\; dy_1\cdots dy_m,
 \end{equation}
where $\sigma$ is the distributional Fourier transform 
of $K$ on $({\mathbb R}^n)^m$
that satisfies (\ref{CMcond}). 
When $m=1$, these operators reduce to classical 
Calder\'{o}n-Zygmund singular integral operators. 

An $m$-linear operator of {\it product type}
on $\mathbb{R}^{mn}$ is defined by
\begin{equation}\label{eq.CalZygOPT-2}
\sum_{\rho=1}^T T_{\sigma_1^\rho}(f_1)(x)\cdots T_{\sigma_m^\rho}(f_m)(x)
\quad (x \in {\mathbb R}^n),
\end{equation}
where the $T_{\sigma_j^\rho}$'s are linear Calder\'{o}n-Zygmund operators 
associated with the multipliers $\sigma_j^\rho$. 
In terms of kernels these operators can be expressed as
\[
 {\mathcal T}_\sigma(f_1,\ldots,f_m)(x) =
\sum_{\rho=1}^T \prod_{j=1}^m \int_{{\mathbb R}^n} K^\rho_{\sigma_j}(x-y_j)
 f_j(y_j) dy_j, 
\]
where $K^\rho_1,\ldots,K^\rho_m$
are the Calder\'{o}n-Zygmund kernels
of the operator
$T^\rho_{\sigma_1},\ldots,T^\rho_{\sigma_m}$,
respectively
for $\rho=1,\ldots,T$.

In this work we also consider operators of {\it mixed type}, i.e., of the form
\begin{equation}\label{eq.CalZygOPT-3}
{\mathcal T}_\sigma(f_1,\ldots,f_m)(x)
=
\sum_{\rho=1}^T
\sum_{\substack{I_1^\rho,\ldots,I_{G(\rho)}^\rho
}}
\prod_{g=1}^{G(\rho)}
T_{\sigma_{I_g^\rho}}(\{f_l\}_{l \in I_g^{{\rho}} })(x),
\end{equation}
where for each $\rho=1,\ldots,T$, $I^\rho_1, \cdots ,I^\rho_{G(\rho)}$ is a partition of 
$\{1,\ldots,m\}$ 
and each $T_{\sigma_{I_g^\rho}}$ is an $| I_g^\rho|$-linear Coifman-Meyer multiplier operator.
We write $I^\rho_1+ \cdots + I^\rho_{G(\rho)}=\{1,\ldots,m\}$ to denote such partitions. 
 
In this work, we study operators of the form \eqref{eq.CalZygOPT}, \eqref{eq.CalZygOPT-2}, and 
\eqref{eq.CalZygOPT-3}. 
We will be working with indices in the following range 
\[
0<p_1,\ldots,p_m \le \infty, \quad
0<p< \infty
\] 
that satisfy (\ref{Holder}). Throughout this paper 
we reserve the letter $s$ to denote the following index:
\begin{equation}\label{eq:s}
s=[n(1/p-1)]_+
\end{equation}
and we fix $N\gg s$   a sufficiently large integer, say $N=m(n+1+2s)$.

We recall that a $(p,\infty)$-atom is an $L^\infty$-function
$a$ that satisfies $|a|\le \chi_Q$, 
where $Q$ is a cube on $\mathbb R^n$ with sides parallel to the axes and 
\[
\int_{{\mathbb R}^n}x^\alpha a(x)\,dx=0
\]
for all $\alpha$ with $|\alpha| \le N$.
By convention, when $p=\infty$, $a$ is called a $(\infty,\infty)$-atom 
 if    $Q= \mathbb{R}^n$ 
 and $\|a\|_{L^{\infty}} \le 1.$ No cancellations are required for $(\infty,\infty)$-atoms.

Our main results are as follows:
\begin{theorem}\label{ThmMain}
Let ${\mathcal T}_\sigma$ be the operator defined in \eqref{eq.CalZygOPT}  
and assume that it satisfies \eqref{CMcond}. Let 
$0<p_1,\ldots,p_m \le \infty$  and $0<p<\infty$ satisfy $(\ref{Holder})$.
 Assume that 
 \begin{equation}\label{eq.TmCan}
 \int_{{\mathbb R}^n} x^{\alpha}{\mathcal T}_\sigma(a_1,\ldots,a_m)(x)\; dx=0,
 \end{equation}
 for all $|\alpha|\le s$ and 
all $(p_l,\infty)$-atoms $a_l$.
Then ${\mathcal T}_\sigma$ can be extended to a bounded map from $H^{p_1}\times\cdots\times H^{p_m}$ to $H^p$. 
\end{theorem}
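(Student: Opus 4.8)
The plan is to reduce the claimed multilinear boundedness $H^{p_1}\times\cdots\times H^{p_m}\to H^p$ to a single atomic estimate and then to control that estimate by a local/global splitting. Since $\mathcal T_\sigma$ is already bounded on products of Lebesgue spaces (by the Coifman--Meyer theorem, given \eqref{CMcond}), the only new content is the gain of the $H^p$ target in place of $L^p$, which is exactly what the cancellation hypothesis \eqref{eq.TmCan} is designed to exploit. First I would recall that $H^{p_l}$ functions admit $(p_l,\infty)$-atomic decompositions $f_l=\sum_{k}\lambda_k^{(l)}a_k^{(l)}$ converging in $\mathscr S'$ and in $H^{p_l}$, with $\sum_k|\lambda_k^{(l)}|^{p_l}\lesssim\|f_l\|_{H^{p_l}}^{p_l}$ (with the obvious modification when $p_l=\infty$, where one simply takes $f_l\in L^\infty$ with norm $\le\|f_l\|_\infty$). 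By multilinearity and the $L^{p_1}\times\cdots\times L^{p_m}\to L^p$ bound one checks that $\mathcal T_\sigma(f_1,\ldots,f_m)=\sum_{k_1,\ldots,k_m}\big(\prod_l\lambda_{k_l}^{(l)}\big)\,\mathcal T_\sigma(a_{k_1}^{(1)},\ldots,a_{k_m}^{(m)})$ in $\mathscr S'$, and then the quasi-triangle inequality in $H^p$ (i.e. $\|\sum g_j\|_{H^p}^{\min(p,1)}\le\sum\|g_j\|_{H^p}^{\min(p,1)}$) together with $\sum_{k_1,\ldots,k_m}\prod_l|\lambda_{k_l}^{(l)}|^{\min(p,1)}\le\prod_l\big(\sum_{k_l}|\lambda_{k_l}^{(l)}|^{p_l}\big)^{\min(p,1)/p_l}$ via Hölder reduces everything to the uniform estimate
\begin{equation}\label{eq.keyAtomEst}
\big\|\mathcal T_\sigma(a_1,\ldots,a_m)\big\|_{H^p}\lesssim \prod_{l=1}^m |Q_l|^{1/p_l}\,|Q_l|^{-1/p_l}=1
\end{equation}
for all tuples of $(p_l,\infty)$-atoms $a_l$, with constant independent of the atoms — more precisely, with the right normalization, $\|\mathcal T_\sigma(a_1,\ldots,a_m)\|_{H^p}\lesssim 1$ when the $a_l$ are genuine atoms (and with an extra bookkeeping factor when some $Q_l=\mathbb R^n$ in the $p_l=\infty$ case).

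The heart of the matter is \eqref{eq.keyAtomEst}. Here I would exploit the $(p,\infty)$ side: by the deep fact that for $0<p\le 1$ the $H^p$ norm is equivalent to the $(p,\infty)$-atomic norm (Taibleson--Weiss, or the general theory already invoked for the hypothesis to make sense), it suffices to show that $\mathcal T_\sigma(a_1,\ldots,a_m)$, which lies in $L^p$ with $\|\mathcal T_\sigma(a_1,\ldots,a_m)\|_{L^p}\lesssim 1$ and has vanishing moments up to order $s$ by \eqref{eq.TmCan}, can be written as $\sum_j\mu_j b_j$ with $b_j$ being $(p,\infty)$-atoms and $\sum_j|\mu_j|^p\lesssim 1$. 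To produce this decomposition I would localize: let $Q^*$ be the smallest cube centered at (a suitable common reference point) containing, say, $8\sqrt n$ times the largest of the cubes $Q_1,\ldots,Q_m$ (reducing first, by translation and dilation invariance of all the estimates, to the case where all atoms are supported near the origin at comparable scales — or, if the scales differ wildly, handling that by the usual nested-scale argument). On $3Q^*$ one controls $\mathcal T_\sigma(a_1,\ldots,a_m)$ directly in $L^\infty$ (or $L^q$ for a convenient $q>1$) using the $L^{q_1}\times\cdots\times L^{q_m}\to L^q$ boundedness of Coifman--Meyer operators and the size bounds $\|a_l\|_{L^{q_l}}\lesssim|Q_l|^{1/q_l-1/p_l}$; this piece, times $\chi_{3Q^*}$, after subtracting off a polynomial of degree $\le s$ to restore the cancellation (legitimate because the whole function has vanishing moments up to order $s$ and the tail piece will too), gives a fixed multiple of one $(p,\infty)$-atom. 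Away from $3Q^*$ one uses the kernel representation \eqref{eq.CalZygOPT} together with the moment cancellation of each atom $a_l$ (up to order $N\gg s$) to gain decay: a multiple Taylor expansion of $K(x-y_1,\ldots,x-y_m)$ in the $y_l$ variables around the centers of the $Q_l$ produces, for $|x|$ large, a bound like $|\mathcal T_\sigma(a_1,\ldots,a_m)(x)|\lesssim \prod_l|Q_l|^{(N+1)/n+\cdots}\,|x|^{-mn-(N+1)m-\cdots}$, which (since $N$ was chosen $\gg s$) is far better than the $|x|^{-n/p-s-\varepsilon}$ decay that a single $(p,\infty)$-atom would exhibit, so the tail is an absolutely convergent sum — actually an integral — of rescaled atoms with $\ell^p$-summable coefficients via the standard annular decomposition $\mathbb R^n\setminus 3Q^*=\bigcup_{\nu\ge1}(2^{\nu+1}Q^*\setminus 2^\nu Q^*)$ and on each annulus a further Whitney-type decomposition into cubes, subtracting polynomials to enforce the $N$-th order vanishing moments of each small atom.

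The main obstacle, I expect, is the tail estimate when the atoms have very different scales (say $|Q_1|$ much smaller than $|Q_{j_0}|$), because then ``centering'' the Taylor expansion is ambiguous and the naive bound degrades; the resolution is to Taylor-expand only in those variables $y_l$ for which $Q_l$ is small compared to the distance to the evaluation point $x$, and to absorb the remaining variables into an $L^\infty$ or weak-type bound using the Coifman--Meyer kernel size estimate $|K(u_1,\ldots,u_m)|\lesssim(|u_1|+\cdots+|u_m|)^{-mn}$ — this is where the largeness of $N=m(n+1+2s)$ is used, since one needs enough spare decay from each ``small-scale'' variable to beat $mn$ plus the $n/p+s$ needed for $\ell^p$ summability even in the worst distribution of scales. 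A second, more bookkeeping-type obstacle is making the polynomial-subtraction step rigorous: each time one localizes a piece and subtracts its degree-$\le s$ Taylor polynomial one must check that the subtracted polynomials, summed over all pieces, reassemble to zero (so that the decomposition genuinely represents $\mathcal T_\sigma(a_1,\ldots,a_m)$), which follows from \eqref{eq.TmCan} but requires care with the convergence of the rearranged series in $\mathscr S'$. Once these are handled, collecting the local atom and the countable family of tail atoms yields \eqref{eq.keyAtomEst}, and hence the theorem.
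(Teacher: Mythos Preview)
Your reduction to a single-tuple atom estimate contains a genuine gap: the inequality you invoke,
\[
\sum_{k_1,\ldots,k_m}\prod_{l=1}^m|\lambda_{k_l}^{(l)}|^{\min(p,1)}\le\prod_{l=1}^m\Big(\sum_{k_l}|\lambda_{k_l}^{(l)}|^{p_l}\Big)^{\min(p,1)/p_l},
\]
is false. Take the main case $p\le 1$, so $\min(p,1)=p$. The left side factors as $\prod_l\|\lambda^{(l)}\|_{\ell^p}^p$, so your bound amounts to $\|\lambda^{(l)}\|_{\ell^p}\le\|\lambda^{(l)}\|_{\ell^{p_l}}$ for each $l$. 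But $1/p=\sum_l 1/p_l$ forces $p\le p_l$, and the $\ell^p$--$\ell^{p_l}$ embedding then goes the \emph{other} way. Concretely, with $m=2$, $p_1=p_2=1$, $p=1/2$, and $\lambda_k^{(l)}=1$ for $1\le k\le N$, the left side is $N^2$ and the right side is $N$. Hence a uniform bound $\|\mathcal T_\sigma(a_1,\ldots,a_m)\|_{H^p}\lesssim 1$ on individual tuples does not sum to the desired $H^{p_1}\times\cdots\times H^{p_m}\to H^p$ estimate. This is precisely the type of flaw the paper's introduction attributes to several earlier approaches.

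The paper avoids this by never separating the sum over $\vec k$. It uses an atomic decomposition in which the control on the coefficients is the functional inequality $\big\|\sum_{k}\lambda_{l,k}\chi_{Q_{l,k}}\big\|_{L^{p_l}}\lesssim\|f_l\|_{H^{p_l}}$ (Theorem~\ref{th-molecule}), and then proves directly the summed estimate
\[
\Big\|\sum_{\vec k}\Big(\prod_l\lambda_{l,k_l}\Big)M_\phi\circ\mathcal T_\sigma(a_{1,k_1},\ldots,a_{m,k_m})\Big\|_{L^p}\lesssim\prod_l\Big\|\sum_{k_l}\lambda_{l,k_l}\chi_{Q_{l,k_l}}\Big\|_{L^{p_l}}
\]
(Proposition~\ref{LM.Key-31}). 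The mechanism is a \emph{pointwise} bound on $M_\phi\circ\mathcal T_\sigma(a_1,\ldots,a_m)(x)$ by products of powers of $M\chi_{Q_l}(x)$ (Lemmas~\ref{lem:160726-51} and~\ref{lm.3A10}); H\"older's inequality is then applied at the level of functions in $L^{p_1}\times\cdots\times L^{p_m}\to L^p$, and each resulting sum $\sum_{k_l}\lambda_{l,k_l}(M\chi_{Q_{l,k_l}})^\gamma$ is handled by the Fefferman--Stein vector-valued maximal inequality (Lemma~\ref{lm.2B00}). Your kernel/cancellation analysis for a fixed $m$-tuple of atoms is broadly correct and close in spirit to Lemmas~\ref{lm.3A1} and~\ref{lm.3A10}, but it has to feed into this pointwise--then--sum framework, not into an $H^p$-norm bound on a single term followed by a quasi-triangle inequality.
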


\begin{theorem}\label{ThmMain-2}
Let ${\mathcal T}_\sigma$ be the operator defined in \eqref{eq.CalZygOPT-2},
$0<p_1,\ldots,p_m< \infty$,
and
$0<p<\infty$ satisfies $(\ref{Holder})$, where each $\sigma_j^\rho$ satisfies \eqref{CMcond} with $m=1$.
 Assume that
$(\ref{eq.TmCan})$ holds
 for all $|\alpha|\le s$. Then ${\mathcal T}_\sigma$ can be extended to a bounded map from $H^{p_1}\times\cdots\times H^{p_m}$ to $H^p$.   
\end{theorem}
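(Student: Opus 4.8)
The plan is to reduce the desired bound to an estimate on atoms, to control each linear factor $T_{\sigma_j^\rho}a_j$ by a size bound and a high-order decay bound, and then to assemble the $m$-fold product into an $H^p$ atomic decomposition with summable coefficients, invoking \eqref{eq.TmCan} to supply the cancellation that upgrades the assembled bumps to genuine $H^p$-atoms. Concretely, by the finite atomic decomposition for $H^{p_j}$ (the finite $(p_j,\infty)$-atomic quasi-norm being equivalent to $\|\cdot\|_{H^{p_j}}$ on a dense class), the $m$-linearity of $\mathcal T_\sigma$, and the quasi-triangle inequality in $H^p$, it suffices to prove
\[
\big\|\mathcal T_\sigma(a_1,\ldots,a_m)\big\|_{H^p}\lesssim\prod_{j=1}^m\|a_j\|_{H^{p_j}}
\]
with a constant independent of the $(p_j,\infty)$-atoms $a_j$ (supported in cubes $Q_j$), and then to sum. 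If $p>1$ then necessarily all $p_j>1$, so $H^p=L^p$ and $H^{p_j}=L^{p_j}$, and the estimate follows at once from H\"older's inequality and the $L^{p_j}$-boundedness of the linear Calder\'on--Zygmund operators $T_{\sigma_j^\rho}$; hence the substance lies in the range $p\le 1$, where one must produce a decomposition $\mathcal T_\sigma(a_1,\ldots,a_m)=\sum_k\mu_k b_k$ into $L^2$-normalized $(p,2)$-atoms $b_k$ with $\sum_k|\mu_k|^p\lesssim\prod_j\|a_j\|_{H^{p_j}}^p$. A structural point to keep in mind is that the individual products $\prod_j T_{\sigma_j^\rho}a_j$ cannot be treated separately in this range, since only the full sum over $\rho$ carries the cancellation \eqref{eq.TmCan}.

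For the linear factors, fix $\rho$ and $j$. Since $\sigma_j^\rho$ satisfies \eqref{CMcond} with $m=1$ up to a large order, its convolution kernel $K_j^\rho$ is a Calder\'on--Zygmund kernel obeying $|\partial^\beta K_j^\rho(y)|\lesssim|y|^{-n-|\beta|}$ for $|\beta|\le N$, and $T_{\sigma_j^\rho}$ is bounded on $L^2$. Writing $c_j$ and $\ell_j$ for the center and side-length of $Q_j$, one gets the $L^2$ bound $\|T_{\sigma_j^\rho}a_j\|_{L^2}\lesssim\|a_j\|_{L^2}\lesssim\ell_j^{\,n/2}\|a_j\|_{L^\infty}$, and, using the vanishing of all moments of $a_j$ up to order $N$ together with the Taylor expansion of $K_j^\rho$ about $c_j$, the decay bound
\[
\big|T_{\sigma_j^\rho}a_j(x)\big|\lesssim\frac{\ell_j^{\,n+N+1}}{|x-c_j|^{\,n+N+1}}\,\|a_j\|_{L^\infty},\qquad x\notin 2Q_j .
\]
Thus each $T_{\sigma_j^\rho}a_j$ behaves like an $L^2$-molecule centered at $Q_j$ with decay exponent $n+N+1$, which by the choice of $N$ is far larger than any exponent the argument requires.

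Set $F=\mathcal T_\sigma(a_1,\ldots,a_m)$. Outside $\bigcup_j 2Q_j$, $F$ is a sum of $m$-fold products of the decaying factors above, hence is majorized by a rapidly decaying tail; inside $\bigcup_j 2Q_j$ one covers by a bounded-overlap family of cubes on each of which $F$ has an $L^2$ bound obtained by multiplying the molecular bounds of the $m$ factors, the factor attached to the ``local'' cube contributing its $L^2$ norm and the others their decay at the appropriate distance. Grouping these pieces scale by scale produces bumps $\widetilde b_k$ supported in dilates of cubes $R_k$, of bounded overlap, with $\widetilde b_k=\mu_k b_k$ where $b_k$ is $L^2$-normalized (that is, $\|b_k\|_{L^2}\le|R_k|^{1/2-1/p}$) and where, because $1/p=\sum_j 1/p_j$, the exponent count built into this identity arranges that everything beyond the baseline matching of normalizations is absorbed into $\mu_k$ in the form of the decay factors $\big(\ell_j/(\ell_j+\mathrm{dist}(Q_j,R_k))\big)^{n+N+1}$. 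Such bumps become $(p,2)$-atoms only once they are given vanishing moments up to order $s=[n(1/p-1)]_+$, which individually they do not have; this is exactly and only where \eqref{eq.TmCan} is used. Since $\sum_k\widetilde b_k=F$ and $F$ has all moments up to order $s$ vanishing by \eqref{eq.TmCan}, one can perform the usual redistribution — subtracting from the bump on the largest relevant cube a polynomial-times-bump correction that carries off the net moments — the correction being controllable because its coefficients are moments of the distant and small bumps, which are tiny by the decay estimate. One then verifies that, after this redistribution, $\sum_k|\mu_k|^p\lesssim\prod_j\|a_j\|_{H^{p_j}}^p$.

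The main obstacle is the assembly step: organizing the atomic decomposition of $F$ so that $\sum_k|\mu_k|^p\lesssim\prod_j\|a_j\|_{H^{p_j}}^p$ \emph{uniformly over all relative scales and positions of $Q_1,\ldots,Q_m$}, while simultaneously tracking how the global cancellation \eqref{eq.TmCan} is inherited by the pieces of the decomposition. The dangerous configurations are those in which the cubes $Q_j$ have wildly different side-lengths or are far apart; what forces the multi-scale sum over $k$ to converge against the loss coming from the exponent $1/p$ is precisely the large number $N$ of vanishing moments of the $(p_j,\infty)$-atoms — the explicit value $N=m(n+1+2s)$ being calibrated so that the $m$-fold product of decays beats the summation — and the delicate point is to verify that the correction forced by \eqref{eq.TmCan} does not destroy this convergence. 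This is reportedly the step at which several earlier treatments of such results contain gaps.
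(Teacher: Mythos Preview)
Your reduction in the first paragraph is incorrect, and this is the decisive gap. You claim that by the finite atomic decomposition, multilinearity, and the $H^p$ quasi-triangle inequality, it suffices to prove a uniform bound on a single $m$-tuple of atoms. But after applying the $p$-power triangle inequality (valid for $p\le 1$) you obtain
\[
\big\|\mathcal T_\sigma(f_1,\ldots,f_m)\big\|_{H^p}^p
\le C\prod_{j=1}^m\Big(\sum_{k_j}\lambda_{j,k_j}^{\,p}\,\|a_{j,k_j}\|_{H^{p_j}}^{\,p}\Big),
\]
and to finish you would need $\big(\sum_{k_j}\lambda_{j,k_j}^{\,p}\,\|a_{j,k_j}\|_{H^{p_j}}^{\,p}\big)^{1/p}\lesssim\|f_j\|_{H^{p_j}}$ for each $j$. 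The atomic decomposition only gives the $\ell^{p_j}$ control $\sum_{k_j}\lambda_{j,k_j}^{\,p_j}|Q_{j,k_j}|\lesssim\|f_j\|_{H^{p_j}}^{p_j}$ (equivalently, in classical normalization, $\sum_{k_j}\mu_{j,k_j}^{\,p_j}\lesssim\|f_j\|_{H^{p_j}}^{p_j}$). Since $1/p=\sum_j 1/p_j$ forces $p<p_j$ for every $j$ once $m\ge 2$, you are asking for $\ell^{p_j}\hookrightarrow\ell^{p}$, which fails. The finite-atomic norm equivalence you invoke does not rescue this: it gives equivalence at the exponent $p_j$, not at the strictly smaller exponent $p$. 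This is precisely the pitfall that led to the flawed earlier proofs the paper alludes to.

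The paper circumvents this by \emph{never} reducing to a single tuple of atoms. Instead it estimates the $L^p$ norm of the \emph{entire} sum
\[
\Big\|\sum_{k_1,\ldots,k_m}\Big(\prod_{l}\lambda_{l,k_l}\Big)M_\phi\circ\mathcal T_\sigma(a_{1,k_1},\ldots,a_{m,k_m})\Big\|_{L^p}
\lesssim
\prod_{l}\Big\|\sum_{k_l}\lambda_{l,k_l}\chi_{Q_{l,k_l}}\Big\|_{L^{p_l}}
\]
directly (Proposition~\ref{LM.Key-31}). For the product type this is achieved through pointwise bounds of the form in Lemma~\ref{lm.4B02},
\[
M_\phi\circ\mathcal T_\sigma(a_1,\ldots,a_m)(x)
\lesssim
\prod_{l}\big(M\chi_{Q_l}(x)\big)^{\frac{n+N+1}{mn}}\big(1+M^{(m)}\circ T_{\sigma_l}(a_l)(x)\big)+\text{(infimum term)},
\]
followed by H\"older's inequality at the level of $L^{p_l}$, the Fefferman--Stein vector-valued maximal inequality, and the summation Lemmas~\ref{lm.2A00} and~\ref{lm-161031-1}. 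The cancellation \eqref{eq.TmCan} is used pointwise, inside the proof of the maximal-function bound (when subtracting the Taylor polynomial of $\phi_t$), not as a global moment correction on an atomic decomposition of $\mathcal T_\sigma(a_1,\ldots,a_m)$.

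Even setting aside the summation issue, your ``assembly'' paragraph is only a narrative: the construction of the bumps $\widetilde b_k$, the precise moment-redistribution, and the verification that the correction terms remain summable with the right exponents are exactly the steps that have gone wrong in previous treatments, and you have not supplied them.
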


\begin{theorem}\label{ThmMain-3}
Let ${\mathcal T}_\sigma$ be the operator defined in \eqref{eq.CalZygOPT-3},
$0<p_1,\ldots,p_m \le \infty$,
and
$0<p<\infty$ satisfies $(\ref{Holder})$. 
Suppose that each $\sigma_{I_g^\rho}$ satisfies \eqref{CMcond} with $m=|I_g^\rho|$.
Assume that
$(\ref{eq.TmCan})$ holds
 for all $|\alpha|\le s$ and that
\begin{equation}\label{160902-1}
\sup_{\rho\, =1,\ldots,T}\sup_{I^\rho_1+ \cdots + I^\rho_{G(\rho)}=\{1,\ldots,m\}}
\inf_{l \in I_g^t}p_l<\infty. 
\end{equation}
Then ${\mathcal T}_\sigma$ can be extended to a bounded map 
from $H^{p_1}\times\cdots\times H^{p_m}$ to $H^p$.
\end{theorem}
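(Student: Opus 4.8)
The plan is to reduce Theorem~\ref{ThmMain-3} to the two preceding theorems by decomposing the mixed-type operator into pieces each of which is either of pure product type or of pure Coifman--Meyer type, and then to handle the interaction between the two mechanisms by an atomic argument. More precisely, fix $\rho$ and a partition $I_1^\rho+\cdots+I_{G(\rho)}^\rho=\{1,\dots,m\}$. The summand $\prod_{g=1}^{G(\rho)}T_{\sigma_{I_g^\rho}}(\{f_l\}_{l\in I_g^\rho})$ is literally a $G(\rho)$-fold product of Coifman--Meyer operators acting on disjoint groups of inputs, so it falls under the scheme of \eqref{eq.CalZygOPT-2} except that the factors are multilinear rather than linear. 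The first step is therefore to observe that the proof strategy behind Theorem~\ref{ThmMain-2} (the $(p,\infty)$-atomic decomposition method announced in the introduction) does not actually use linearity of the factors, only that each factor is an $|I_g^\rho|$-linear Calder\'on--Zygmund operator with kernel enjoying the standard size and smoothness bounds coming from \eqref{CMcond} with $m=|I_g^\rho|$; and that the relevant intermediate Lebesgue exponents $q_g^\rho$, defined by $1/q_g^\rho=\sum_{l\in I_g^\rho}1/p_l$, land in a usable range. This is exactly where hypothesis \eqref{160902-1} enters: it guarantees $\inf_{l\in I_g^\rho}p_l<\infty$ for every group, hence $q_g^\rho<\infty$, so that each factor $T_{\sigma_{I_g^\rho}}$ maps $\prod_{l\in I_g^\rho}H^{p_l}$ (or $L^{p_l}$) boundedly into $L^{q_g^\rho}$ with $q_g^\rho$ finite, which is what the product-type argument needs in order to multiply the factors together via H\"older's inequality and arrive at an $L^p$ (or weak-type) bound with $1/p=\sum_g 1/q_g^\rho$.

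Granting this, the core of the proof is the $H^p$ estimate, and here I would follow the template used for Theorems~\ref{ThmMain} and \ref{ThmMain-2}. One tests $\mathcal T_\sigma(a_1,\dots,a_m)$ against $(p_l,\infty)$-atoms $a_l$ supported in cubes $Q_l$, and one must show the output belongs to $H^p$ with norm controlled uniformly. Split the output into a \emph{local} part, where all dilated cubes $c Q_l$ overlap a common region, and a \emph{global} part. For the global part one exploits the high vanishing moments of the atoms (up to order $N=m(n+1+2s)$) together with the kernel regularity of each $T_{\sigma_{I_g^\rho}}$ to extract decay; since the $T_{\sigma_{I_g^\rho}}$'s act on disjoint variable groups, the decay estimates for the several factors simply multiply, and summing the resulting geometric series over the dyadic scales gives an $L^1$-type (or $L^r$ for small $r$) bound on the global piece that is summable to the $H^p$ norm. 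For the local part, one uses the cancellation hypothesis \eqref{eq.TmCan}: after subtracting a suitable polynomial one sees that the local piece, suitably normalized, is itself (a fixed multiple of) a $(p,\infty)$-atom up to an acceptable error, whence it lies in $H^p$ with the correct norm. The passage from atoms to general $H^{p_l}$ inputs is then the standard density/continuity argument that is already invoked in the proofs of the earlier theorems.

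I expect the main obstacle to be the bookkeeping of the \emph{mixed} decay estimate in the global part, specifically making sure that the vanishing-moment gain from an atom $a_l$ is correctly transmitted through whichever factor $T_{\sigma_{I_g^\rho}}$ happens to contain the index $l$, while simultaneously the \emph{other} factors (which do not see $a_l$'s cancellation) are controlled only by size bounds and therefore contribute merely $L^{q_{g'}^\rho}$ norms. One must verify that the product of ``one good factor times several merely-bounded factors,'' summed over which index carries the cancellation and over all the partitions and all $\rho$, still yields enough decay in the ratio of the cube sidelengths; this is where the choice $N\gg s$ and the finiteness condition \eqref{160902-1} must be used in tandem, and it is the step most likely to require care rather than being routine. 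A secondary technical point is handling the case $p_l=\infty$ for some inputs: then $a_l$ is an $(\infty,\infty)$-atom with no cancellation, so the corresponding variable group must always be treated by size estimates only, and one must check that \eqref{160902-1} still forces at least one finite exponent in each group so that the H\"older bookkeeping closes; once that is in place the remainder of the argument is a direct adaptation of the proofs of Theorems~\ref{ThmMain} and \ref{ThmMain-2}.
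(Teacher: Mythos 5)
Your overall strategy---atomic decomposition, exploiting that the factors $T_{\sigma_{I_g^\rho}}$ act on disjoint variable groups, H\"older's inequality with the exponents $q_g^\rho$ given by $1/q_g^\rho=\sum_{l\in I_g^\rho}1/p_l$, and using \eqref{160902-1} to keep each $q_g^\rho$ finite---agrees with the paper's. But there is a genuine gap in how you close the argument, and it sits exactly at the point the introduction warns about. You propose to show that the \emph{local} part of $\mathcal T_\sigma(a_1,\dots,a_m)$ is, after normalization, essentially a $(p,\infty)$-atom, ``whence it lies in $H^p$ with the correct norm,'' and then to pass to general inputs by a density argument. For $p\le 1$ this does not suffice: one must sum the outputs over the full multi-indexed family $\vec{k}=(k_1,\dots,k_m)$ of atoms with coefficients $\prod_l\lambda_{l,k_l}$, and the $p$-triangle inequality in $H^p$ only yields $\sum_{\vec{k}}\bigl(\prod_l\lambda_{l,k_l}\bigr)^p\|\mathcal T_\sigma(a_{1,k_1},\dots)\|_{H^p}^p$, an $\ell^p$-type quantity that is \emph{not} controlled by $\prod_l\|f_l\|_{H^{p_l}}^p$ under the decomposition of Theorem \ref{th-molecule} (which controls $\|\sum_{k_l}\lambda_{l,k_l}\chi_{Q_{l,k_l}}\|_{L^{p_l}}$, a strictly weaker quantity than $(\sum_{k_l}\lambda_{l,k_l}^p|Q_{l,k_l}|)^{1/p}$). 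Moreover, uniform $H^p$ bounds on $m$-tuples of atoms do not by themselves imply boundedness of the extension; this is one of the flaws in the earlier literature that the paper is written to repair.

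What the paper actually proves is the single estimate \eqref{eq.PWEST} on the $L^p$ norm of the \emph{entire sum} $\sum_{\vec{k}}\bigl(\prod_l\lambda_{l,k_l}\bigr)M_\phi\circ\mathcal T_\sigma(a_{1,k_1},\dots,a_{m,k_m})$. For the mixed type this rests on pointwise bounds (Lemmas \ref{lm.5A00}--\ref{lm.5C06}) of the form
\[
M_\phi\circ\mathcal T_\sigma(a_{1,k_1},\dots,a_{m,k_m})(x)\lesssim \prod_{g=1}^G b_{g,\vec{k}(g)}(x)+M\chi_{R_{\vec{k}}^*}(x)^{\frac{n+s+1}{n}}\prod_{g=1}^G\inf_{z\in R_{\vec{k}}^*}b_{g,\vec{k}(g)}(z),
\]
where each $b_{g,\vec{k}(g)}$ is built from powers of $M\chi_{Q_{l,k_l}}$ and from $M^{(G)}\circ T_{\sigma_{I_g}}(\{a_{l,k_l}\})$; the cancellation \eqref{eq.TmCan} enters through a Taylor expansion of $\phi_t$ at $c_1$ when $c_1\in B(x,100n^2t)$, and the summation over $\vec{k}$ is then carried out with the Fefferman--Stein vector-valued machinery (Lemmas \ref{lm.2B00}, \ref{lm.2A00}, \ref{lm-161031-1}) together with \eqref{crucial77}. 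This summation is the heart of the proof, and your proposal treats it as routine. Your global-part sketch (moments of $a_l$ transmitted through whichever factor contains $l$, the other factors controlled by size only) is consistent with Lemmas \ref{lm-161112-1} and \ref{lm.5A02}, and your reading of hypothesis \eqref{160902-1} is correct; but without replacing the ``local piece is an atom'' step by the pointwise maximal-function estimates and the vector-valued summation, the argument does not close.
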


\begin{remark}  
\noindent (1)
In Theorem \ref{ThmMain-2}, 
we exclude the case $p_l=\infty$ for all 
$l=1,\ldots, m$. 
In fact, one can not expect the mapping property of 
$\mathcal{T}_\sigma$ with \eqref{eq.CalZygOPT-2} if 
$p_l=\infty$ for some $l=1,\ldots,m$.
Similarly, in Theorem \ref{ThmMain-3}, 
we need to assume \eqref{160902-1} 
instead of the exclusion of the case $p_l=\infty$ for some $l=1,\ldots,m$.

 (2) The convergence of the integral in \eqref{eq.TmCan} is a consequence of Lemma \ref{lm.3A1} for all  $x$  
 outside the union of a fixed multiple of the supports of $a_i$, while the function $T(a_1, \dots, a_m)$ is 
 integrable for $x$ inside any compact set.
\end{remark}

A few comments about the notation. For brevity 
we write $d\vec{y}=dy_1\,\cdots\,dy_m$ 
and we use the symbol $C $ to denote a nonessential constant whose value 
may vary at different occurrences. 
For $(k_1,\ldots,k_m) \in {\mathbb Z}^m$,
we write $\vec{k}=(k_1,\ldots,k_m)$.
We use the notation $A\lesssim B$ to indicate that $A\le C\, B$ for some constant $C$. We denote the 
Hardy-Littlewood maximal operator by $M$:
\begin{equation}\label{eq:160918-1}
M f(x)=\sup_{r>0}\frac{1}{r^n}\int_{B(x,r)}|f(y)|\,dy.
\end{equation}
We say that $A\approx B$ if both $A\lesssim B$ and $B\lesssim A$ hold. 
The cardinality of a finite set $J$ is denoted by either $|J|$ or $\sharp J$. 

A cube $Q$ in $\mathbb R^n$ has sides parallel to the axes. 
We denote by $Q^*$ a centered-dilated cube of any cube $Q$ 
with the length scale factor $3\sqrt{n}$; then
\begin{equation}\label{eq:QStar}
Q^*=3\sqrt{n}Q^*, \quad
Q^{**}=9n Q.
\end{equation}

\section{Preliminary and related results}
\label{s:160725-2}

\subsection{Equivalent definitions of Hardy spaces}
We begin this section by recalling Hardy spaces.
Let $\phi \in C^\infty_{\rm c}$ satisfy
\begin{equation}\label{eq:160716-101}
{\rm supp}(\phi)
\subset
\left\{x\in \mathbb{R}^n\ :\ |x|\le 1\right\}
\end{equation} 
and 
\begin{equation}\label{eq:160716-102}
\int_{{\mathbb R}^n} \phi(y)\; dy=1.
\end{equation} 
For $t>0$, we set
$\phi_t(x)=t^{-n}\phi(t^{-1}x)$.
The maximal function $M_\phi$ associated 
with the smooth bump $\phi$ is given by:
\begin{equation}\label{eq:M phi}
M_\phi(f)(x) = \sup_{t>0}\big| (\phi_t*f)(x) \big| 
= \sup_{t>0}\Big| t^{-n}\int_{{\mathbb R}^n} \phi\big(y/t\big)f(x-y)\; dy \Big| 
\end{equation}
for $f\in \mathscr S'(\mathbb R^n)$. 
For $0<p<\infty$, 
the Hardy space $H^{p}$ is
characterized as the space of all tempered distributions $f$ for which $M_\phi(f)\in L^p$; also 
the $H^p$ quasinorm satisfies
\[
\|f\|_{H^p} \approx \|M_\phi(f)\|_{L^p}.
\]

Denote by $\mathscr C^\infty_{\rm c}$ the space of all smooth functions on $\mathbb{R}^n$ with compact support. 
The following density property of Hardy spaces will be useful in the proof of the main theorems. 

\begin{proposition}[{\rm \cite[Chapter III, 5.2(b)]{SteinHA}}]
\label{prop:160726-1}
Let $N \gg s$ be fixed.
Then the following space is dense in $H^p$:
\[
\mathcal{O}_N(\mathbb{R}^n)=
\bigcap_{\alpha \in {\mathbb N}_0^n, |\alpha| \le N}
\left\{f \in \mathscr C^\infty_{\rm c}\,:\,
\int_{{\mathbb R}^n}x^\alpha f(x)\,dx=0\right\},
\]
where $\mathscr C^\infty_{\rm c}$ is the space of all smooth functions with compact supports in $\mathbb{R}^n.$
\end{proposition}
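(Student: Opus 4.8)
The plan is to combine the atomic decomposition of $H^p$ with a mollification that \emph{preserves} high-order vanishing moments. Assume first $0<p\le 1$. It is a standard consequence of the atomic decomposition of $H^p$ that finite linear combinations of $(p,\infty)$-atoms whose moments vanish up to order $N$ are dense in $H^p$: the decomposition can be performed with any prescribed number of vanishing moments $\ge s$, and if $f=\sum_j\lambda_ja_j$ is such a decomposition (converging in $\mathscr S'$) then, since $\|\cdot\|_{H^p}^p$ is subadditive and an atom supported in a cube $Q$ obeys $\|a\|_{H^p}\lesssim|Q|^{1/p}$, the partial sums converge to $f$ in $H^p$. Hence it suffices to approximate in $H^p$ a single $(p,\infty)$-atom $a$, supported in a cube $Q$ and with moments vanishing up to order $N$, by elements of $\mathcal{O}_N(\mathbb{R}^n)$.

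Fix $\psi\in\mathscr C^\infty_{\rm c}$ with ${\rm supp}(\psi)\subset\{|x|\le 1\}$ and $\int_{\mathbb{R}^n}\psi=1$, set $\psi_\epsilon=\epsilon^{-n}\psi(\cdot/\epsilon)$ and $a^\epsilon=a*\psi_\epsilon$. Then $a^\epsilon\in\mathscr C^\infty_{\rm c}$, supported in $2Q$ once $\epsilon$ is smaller than the side length of $Q$, and by Fubini together with the expansion $x^\alpha=\sum_{\beta\le\alpha}\binom{\alpha}{\beta}(x-y)^\beta y^{\alpha-\beta}$ one obtains
\[
\int_{\mathbb{R}^n}x^\alpha a^\epsilon(x)\,dx
=\sum_{\beta\le\alpha}\binom{\alpha}{\beta}\Big(\int_{\mathbb{R}^n}z^\beta\psi_\epsilon(z)\,dz\Big)\Big(\int_{\mathbb{R}^n}y^{\alpha-\beta}a(y)\,dy\Big)=0
\]
for every $|\alpha|\le N$, because $|\alpha-\beta|\le|\alpha|\le N$ and $a$ has vanishing moments up to order $N$; thus $a^\epsilon\in\mathcal{O}_N(\mathbb{R}^n)$. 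For the quantitative convergence, observe that $a-a^\epsilon$ is supported in $2Q$ and still has vanishing moments up to order $N\ge s$, so a fixed multiple of it is an $L^2$-normalized atom with $s$ vanishing moments; the standard atom estimate then gives $\|a-a^\epsilon\|_{H^p}\lesssim|2Q|^{1/p-1/2}\|a-a^\epsilon\|_{L^2}$, which tends to $0$ as $\epsilon\to0$ since $a\in L^2$ and $(\psi_\epsilon)$ is an approximate identity (the cube $Q$ being fixed). Together with the first paragraph and the quasi-triangle inequality this proves the density of $\mathcal{O}_N(\mathbb{R}^n)$ in $H^p$ for $0<p\le 1$.

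For $1<p<\infty$ we use $H^p=L^p$ with norm equivalence and the density of $\mathscr C^\infty_{\rm c}$ in $L^p$. Given $g\in\mathscr C^\infty_{\rm c}$ and $\delta>0$, we remove the finitely many moments $c_\alpha=\int x^\alpha g$, $|\alpha|\le N$, by subtracting a far-away and highly spread-out bump: for $R\ge 1$ write $h_R(x)=R^{-n}\Phi_R((x-Re_1)/R)$ with $\Phi_R\in\mathscr C^\infty_{\rm c}(B(0,1/2))$, and note that the condition $\int x^\alpha h_R=c_\alpha$ amounts to $\int(y+e_1)^\alpha\Phi_R(y)\,dy=c_\alpha R^{-|\alpha|}$. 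Since the moment functional on $\mathscr C^\infty_{\rm c}(B(0,1/2))$ is onto the corresponding finite-dimensional space, a bounded linear right inverse produces $\Phi_R$ with $\|\Phi_R\|_{L^p}\lesssim_{g,N}1$ (the prescribed data being bounded uniformly in $R\ge1$), whence $\|h_R\|_{L^p}\lesssim_{g,N}R^{n(1/p-1)}\to0$. Then $g-h_R\in\mathcal{O}_N(\mathbb{R}^n)$ and $\|g-(g-h_R)\|_{L^p}=\|h_R\|_{L^p}<\delta$ for $R$ large, giving density in $L^p=H^p$.

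I expect the routine parts to be the atomic decomposition and the elementary mollifier bounds; the two places needing genuine care — and where a naive attempt fails — are: (i) the mollification error must be measured in $H^p$ through the $L^2$ (equivalently any $L^q$, $q<\infty$) size of $a-a^\epsilon$ via the $(p,q)$-atom estimate, since the $L^\infty$ norm of $a-a^\epsilon$ does not tend to $0$ (atoms need not be continuous); and (ii) for $p>1$, where atoms no longer sum controllably in $\ell^p$, one must argue directly, the decisive mechanism being that spreading the moment-correcting bump over a ball of radius $\sim R$ forces its $L^p$ norm to decay like $R^{n(1/p-1)}$.
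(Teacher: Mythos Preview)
The paper does not supply its own proof of this proposition; it simply cites Stein's book. Your argument is correct and is essentially the standard route one would take to justify the citation: reduce via the atomic decomposition to a single $(p,\infty)$-atom with $N$ vanishing moments, mollify while observing that convolution preserves the moment conditions, and control the $H^p$-error through the $(p,2)$-atom bound $\|a-a^\epsilon\|_{H^p}\lesssim|2Q|^{1/p-1/2}\|a-a^\epsilon\|_{L^2}$. Your separate treatment of $1<p<\infty$ by a dilated moment-correcting bump is also sound, and your closing remarks identify exactly the two places where a careless argument would break down.

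One minor remark on the first reduction step: to conclude that the partial sums $\sum_{j\le K}\lambda_j a_j$ converge to $f$ in $H^p$ you implicitly use $\sum_j\lambda_j^p|Q_j|<\infty$, which is the classical $\ell^p$-form of the atomic decomposition rather than the (stronger) form recorded in the paper's Theorem~\ref{th-molecule}. Both hold, but since you are writing in the paper's conventions you might either invoke the classical version directly or note that the paper's estimate $\big\|\sum_j\lambda_j\chi_{Q_j}\big\|_{L^p}\lesssim\|f\|_{H^p}$ together with disjointification (or simply the standard proof of the decomposition) yields the $\ell^p$ bound you need.
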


The definition of the Hardy space is useful
as the following theorem implies:
\begin{theorem}[\cite{Nakai-Sawano-2014}]
\label{th-molecule}
Let $0<p<\infty$.

If $f \in H^p$, then
there exist a collection of $(p,\infty)$-atoms
$\{a_k\}_{k=1}^\infty$ and
a nonnegative sequence
$\{\lambda_k\}_{k=1}^\infty$
such that
\[
f=\sum_{k=1}^\infty \lambda_k a_k
\]
in ${\mathscr S}'({\mathbb R}^n)$
and that 
we have 
\begin{equation*}%\label{crucial77}
\Big\|
\sum_{k=1}^\infty
\lambda_k\chi_{Q_k}
\Big\|_{L^p}
\lesssim \|f\|_{H^p}.
\end{equation*}

Moreover, if $f \in \mathscr C^\infty_{\rm c}$ and
$\displaystyle
\int_{{\mathbb R}^n}x^\alpha f(x)\,dx=0
$
for all $\alpha$ with $|\alpha| \le [n(1/p-1)]_+$, 
then
we can arrange that $\lambda_k=0$ for all but finitely many $k$.
\end{theorem}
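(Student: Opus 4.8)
The plan is to run the Calder\'on--Zygmund decomposition of $f$ at the dyadic heights of the smooth maximal function $M_\phi f$, but using polynomial corrections of the large degree $N$ rather than the usual degree $s$, and then to reassemble the ``bad'' parts into $(p,\infty)$-atoms. Since $f\in H^p$ we have $M_\phi f\in L^p$; for $k\in\mathbb Z$ set $\Omega_k=\{x:M_\phi f(x)>2^k\}$, an open set of finite measure, decreasing in $k$, with $|\bigcap_k\Omega_k|=0$. Fix a Whitney decomposition $\Omega_k=\bigcup_j Q_{k,j}$ with $\operatorname{diam}Q_{k,j}\approx\operatorname{dist}(Q_{k,j},\Omega_k^{\,c})$ and bounded overlap of the dilated cubes $Q_{k,j}^*$, together with a subordinate smooth partition of unity $\{\zeta_{k,j}\}$ satisfying $\sum_j\zeta_{k,j}=\chi_{\Omega_k}$, $\operatorname{supp}\zeta_{k,j}\subset Q_{k,j}^*$, and the natural scaled bounds on its derivatives. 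Letting $P_{k,j}$ be the projection of $f$ onto polynomials of degree $\le N$ in $L^2(\zeta_{k,j}\,dx)$, put $b_{k,j}=(f-P_{k,j})\zeta_{k,j}$, $b_k=\sum_j b_{k,j}$, and $g_k=f-b_k$; then each $b_{k,j}$ is supported in $Q_{k,j}^*$ and has vanishing moments of all orders $\le N$.

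First I would establish the two basic estimates $\|g_k\|_{L^\infty}\lesssim 2^k$ and $\|P_{k,j}\|_{L^\infty(Q_{k,j}^*)}\lesssim 2^k$; for the latter one uses that $M_\phi f\le 2^k$ at a point of $\Omega_k^{\,c}$ abutting the Whitney cube (which exists by the Whitney property) to control the relevant averages of $f$ against $\zeta_{k,j}$, together with the scale-invariant equivalence of all norms on the finite-dimensional space of polynomials of degree $\le N$. The first estimate gives $g_k\to 0$ in $\mathscr S'$ as $k\to-\infty$ and $g_k\to f$ in $\mathscr S'$ as $k\to+\infty$ by the standard argument, whence $f=\sum_k(g_{k+1}-g_k)$ in $\mathscr S'$. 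Next I rewrite $g_{k+1}-g_k=b_k-b_{k+1}$, split each $b_{k+1,i}$ as $\sum_j b_{k+1,i}\zeta_{k,j}$ (boundedly many nonzero terms, and only for cubes $Q_{k,j}$ no smaller than $Q_{k+1,i}$, since $\Omega_{k+1}\subset\Omega_k$), and then subtract from each $j$-group the degree-$N$ polynomial multiple of $\zeta_{k,j}$ that restores the vanishing of moments up to order $N$. This yields $g_{k+1}-g_k=\sum_j A_{k,j}$, where $A_{k,j}$ is supported in $Q_{k,j}^*$, has vanishing moments of order $\le N$, and satisfies $\|A_{k,j}\|_{L^\infty}\lesssim 2^k$ by the two basic estimates and the bounded-overlap bookkeeping. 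Hence $A_{k,j}=\lambda_{k,j}a_{k,j}$ with $\lambda_{k,j}=C2^k$ and $a_{k,j}$ a $(p,\infty)$-atom having defining cube $Q_{k,j}^*$.

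It remains to estimate $\big\|\sum_{k,j}\lambda_{k,j}\chi_{Q_{k,j}^*}\big\|_{L^p}$. Bounded overlap of the $Q_{k,j}^*$ together with $\sum_j|Q_{k,j}^*|\lesssim|\Omega_k|$ gives $\sum_j\chi_{Q_{k,j}^*}\lesssim 1$ on a set of measure $\lesssim|\Omega_k|$, as well as $\chi_{\bigcup_j Q_{k,j}^*}\lesssim M\chi_{\Omega_k}$ pointwise; combined with $\sum_{k\,:\,2^k<M_\phi f(x)}2^k\approx M_\phi f(x)$ this gives $\sum_{k,j}\lambda_{k,j}\chi_{Q_{k,j}^*}\lesssim M\big(\sum_k 2^k\chi_{\Omega_k}\big)\lesssim M(M_\phi f)$. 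For $p>1$ the $L^p$-boundedness of $M$ then yields $\big\|\sum_{k,j}\lambda_{k,j}\chi_{Q_{k,j}^*}\big\|_{L^p}\lesssim\|M_\phi f\|_{L^p}\approx\|f\|_{H^p}$; for $0<p\le 1$ one estimates directly $\big\|\sum_{k,j}\lambda_{k,j}\chi_{Q_{k,j}^*}\big\|_{L^p}^p\le\sum_k 2^{kp}\big\|\sum_j\chi_{Q_{k,j}^*}\big\|_{L^p}^p\lesssim\sum_k 2^{kp}|\Omega_k|\approx\int(M_\phi f)^p\approx\|f\|_{H^p}^p$, proving the first assertion. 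For the second, assume in addition that $f\in\mathscr C^\infty_{\rm c}$ has vanishing moments up to the order $N$ of the atoms: boundedness of $f$ gives $M_\phi f\le\|f\|_{L^\infty}\|\phi\|_{L^1}$, so $\Omega_k=\emptyset$ once $k\ge k_1$; the cancellation of $f$ forces rapid decay of $M_\phi f$, so each $\Omega_k$ is bounded; and $P_{k,j}=0$ whenever $Q_{k,j}^*\cap\operatorname{supp}f=\emptyset$, so each $g_{k+1}-g_k$ involves only boundedly many atoms. Truncating at a sufficiently negative level $K$ leaves $f=g_K+\sum_{K\le k<k_1}\sum_j\lambda_{k,j}a_{k,j}$, a finite combination of atoms plus the single function $g_K$, which is supported in one bounded cube, is $O(2^K)$ in sup-norm, and satisfies $\int x^\alpha g_K\,dx=\int x^\alpha f\,dx=0$ for $|\alpha|\le N$; hence $g_K$ is a constant multiple of a $(p,\infty)$-atom, with norm contribution $\lesssim\|f\|_{H^p}$, and the decomposition is finite.

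The step I expect to be the main obstacle is keeping the height bound $\|A_{k,j}\|_{L^\infty}\lesssim 2^k$ while insisting on cancellation of the large order $N$: this is exactly where one needs the uniform control of $M_\phi f$ on $\Omega_k^{\,c}$ adjacent to the Whitney cubes together with the rescaled equivalence of norms on polynomials of degree $\le N$, and where the bounded-overlap bookkeeping in redistributing $b_{k+1,i}$ among the cubes $Q_{k,j}$ must be carried out carefully so that the atomic normalization $\|a_{k,j}\|_{L^\infty}\le 1$ survives. (The convergence $g_k\to f$ in $\mathscr S'$, while standard, is the other point that requires some care for rough $f$.)
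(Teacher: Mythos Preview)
The paper does not prove this theorem; it is quoted from \cite{Nakai-Sawano-2014} and used as a black box, so there is no proof in the paper to compare against. Your sketch is the standard Calder\'on--Zygmund/Whitney route (essentially the argument in \cite{SteinHA}, Chapter~III, carried out with moment order $N$ rather than $s$), and the main part is correct. In fact $\sum_{k,j}\lambda_{k,j}\chi_{Q_{k,j}^*}\lesssim\sum_k 2^k\chi_{\Omega_k}\lesssim M_\phi f$ already holds pointwise from bounded overlap and the Whitney inclusion $Q_{k,j}^*\subset\Omega_k$, so your detour through $M(M_\phi f)$ for $p>1$ is unnecessary.

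One point deserves comment. In the ``Moreover'' clause you silently strengthen the hypothesis from $\int x^\alpha f=0$ for $|\alpha|\le s$ to $|\alpha|\le N$. This is in fact forced: a \emph{finite} linear combination of atoms with vanishing moments up to order $N$ itself has vanishing moments up to order $N$, so the ``Moreover'' as literally stated (with only $s$ moments on $f$) cannot hold once $N>s$. The paper's own application in Section~\ref{s:160725-2} takes $f\in\mathcal O_N$, i.e., exactly the stronger hypothesis you assume, so your version is the one actually needed; you should flag the discrepancy rather than change the hypothesis without remark. A second, smaller issue: your claim that each level $k$ contributes only finitely many nonzero $A_{k,j}$ relies on $\operatorname{supp}f$ lying at positive distance from $\partial\Omega_k$, which can fail if $M_\phi f$ vanishes somewhere on $\operatorname{supp}f$. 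A clean fix is to note that $g_{k+1}-g_k$ is itself bounded by $C2^k$, supported in the bounded set $\Omega_k$, and has vanishing moments up to order $N$, hence is already a constant multiple of a single $(p,\infty)$-atom; together with $g_K$ this gives a finite decomposition directly.
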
 

The following lemma, whose proof is just an application of the Fefferman-Stein vector-valued inequality for maximal function, will be used frequently in the next sections.
\begin{lemma}
\label{lm.2B00}
If $\gamma >\max(1,\frac1p)$, $0<p<\infty$, $\lambda_k\ge 0$ and $\{Q_k\}_k$ are sequence of cubes, then
\[
\Big\|
\sum_{k}\lambda_k (M\chi_{Q_k})^{\gamma}
\Big\|_{L^p}
\lesssim
\Big\|\sum_{k}\lambda_k \chi_{Q_k}\Big\|_{L^p}.
\]
In particular
\[
\Big\|\sum_{k}\lambda_k \chi_{Q_k^{**}}\Big\|_{L^p}
\lesssim
\Big\|\sum_{k}\lambda_k \chi_{Q_k}\Big\|_{L^p}.
\]
\end{lemma}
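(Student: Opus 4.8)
The plan is to reduce the first inequality to the Fefferman--Stein vector-valued maximal inequality on $L^{p/q}(\ell^{q/1})$ for a suitable auxiliary exponent $q$, and then derive the second inequality as the special case obtained by bounding $\chi_{Q_k^{**}}$ pointwise by a constant multiple of $(M\chi_{Q_k})^{\gamma}$. First I would fix $q$ with $1<q<\min(p,1)^{-1}$... more precisely, since $\gamma>\max(1,1/p)$, I choose $q$ with $1<q<\gamma$ and $q<\gamma p$ (equivalently $1/q>1/(\gamma p)$ and $1/q>1/\gamma$); such a $q$ exists because $\gamma>1$ and $\gamma p>1$. Then $(M\chi_{Q_k})^\gamma=\bigl((M\chi_{Q_k})^{\gamma/q}\bigr)^q$, and since $\gamma/q>1$, the submultiplicativity-type bound $M\chi_{Q_k}\le (M\chi_{Q_k}^{1})$ is not quite what is needed; instead I would use the elementary pointwise estimate
\[
(M\chi_{Q_k}(x))^{\gamma/q}\lesssim M\bigl(\chi_{Q_k}^{q/\gamma}\bigr)(x)^{?}
\]
— rather, the cleanest route is the standard fact that for $0<\theta<1$ one has $M(f)\lesssim \bigl(M(|f|^\theta)\bigr)^{1/\theta}$, applied with $f=\chi_{Q_k}$ and $\theta=q/\gamma\in(0,1)$, giving $(M\chi_{Q_k})^{\gamma/q}\lesssim M(\chi_{Q_k})$ trivially since $\chi_{Q_k}^\theta=\chi_{Q_k}$; so in fact $(M\chi_{Q_k})^{\gamma/q}$ is already of the form we can feed into Fefferman--Stein once we write things in $\ell^q$.

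Concretely, the second and cleaner step: write $\lambda_k=\mu_k^q$ with $\mu_k=\lambda_k^{1/q}\ge 0$, so that
\[
\sum_k \lambda_k (M\chi_{Q_k})^\gamma
=\sum_k \Bigl(\mu_k (M\chi_{Q_k})^{\gamma/q}\Bigr)^q.
\]
Taking $L^p$ quasinorms and using $\|(\sum_k g_k^q)^{1/q}\|_{L^p}^q=\|(\sum_k g_k^q)\|_{L^{p/q}}$ reduces the claim to
\[
\Bigl\|\bigl(\textstyle\sum_k (\mu_k (M\chi_{Q_k})^{\gamma/q})^q\bigr)^{1/q}\Bigr\|_{L^p}
\lesssim
\Bigl\|\bigl(\textstyle\sum_k (\mu_k \chi_{Q_k})^q\bigr)^{1/q}\Bigr\|_{L^p}.
\]
Now $(M\chi_{Q_k})^{\gamma/q}\lesssim M(\chi_{Q_k})$ when $\gamma/q\ge 1$ fails in general, so instead I invoke the sharper pointwise bound $(M\chi_{Q_k}(x))^{\gamma/q}\lesssim M\bigl((\chi_{Q_k})\bigr)(x)$ is false; the correct inequality I would actually cite is: for any cube $Q$ and any $r>0$, $M(\chi_Q)^r\lesssim M(\chi_Q^{1})$ holds only for $r\le 1$. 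Since $\gamma/q$ may exceed $1$, I instead absorb the extra power differently: pick $q$ with $1<q<\gamma$ AND $p/q$ large enough; then $M(\chi_{Q_k})^{\gamma/q}$ is handled by noting $(M\chi_{Q_k})^{\gamma/q}=M(\chi_{Q_k})\cdot (M\chi_{Q_k})^{\gamma/q-1}\le M(\chi_{Q_k})\cdot 1^{\gamma/q-1}=M(\chi_{Q_k})$ because $M\chi_{Q_k}\le 1$ pointwise. That is the key elementary observation: $0\le M\chi_{Q_k}\le 1$, hence raising to a power $\ge 1$ only decreases it, so $(M\chi_{Q_k})^\gamma\le (M\chi_{Q_k})^q$ whenever $\gamma\ge q$. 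Therefore it suffices to prove the inequality with $\gamma$ replaced by $q$, i.e.
\[
\Bigl\|\sum_k \lambda_k (M\chi_{Q_k})^{q}\Bigr\|_{L^p}
\lesssim
\Bigl\|\sum_k \lambda_k \chi_{Q_k}\Bigr\|_{L^p},
\]
and rewriting as above this is exactly the Fefferman--Stein vector-valued maximal inequality in $L^{p/q}(\ell^1)$... more precisely in the form $\|(\sum_k (M g_k)^q)^{1/q}\|_{L^p}\lesssim \|(\sum_k g_k^q)^{1/q}\|_{L^p}$ with $g_k=\mu_k\chi_{Q_k}$, which holds because $1<q<\infty$ and $0<p<\infty$ with $p/q$ and $q$ both in the allowed range $(1,\infty)$ for Fefferman--Stein after the standard extension to all $0<p<\infty$ via the $M(|f|^\theta)^{1/\theta}$ trick. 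Finally, for the ``in particular'' statement, I observe $Q_k^{**}=9nQ_k$, so $\chi_{Q_k^{**}}(x)\le (9n)^n\,(M\chi_{Q_k}(x))$ is false in general (the maximal function at a point far inside $Q_k^{**}\setminus Q_k$ near its boundary can be small), so instead I use $\chi_{Q_k^{**}}\lesssim (M\chi_{Q_k})^{\gamma}$ for $x\in Q_k^{**}$: indeed for such $x$, $M\chi_{Q_k}(x)\gtrsim |Q_k|/|B(x, c\,\ell(Q_k))|\gtrsim n^{-n/2}$... actually $M\chi_{Q_k}(x)\ge c_n>0$ uniformly on $Q_k^{**}$ because $Q_k\subset B(x, C_n\ell(Q_k))$ with $|B(x,C_n\ell(Q_k))|\approx |Q_k|$; hence $\chi_{Q_k^{**}}\le c_n^{-\gamma}(M\chi_{Q_k})^{\gamma}$ pointwise, and applying the first part finishes the proof.

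The main obstacle here is purely bookkeeping: choosing the auxiliary exponent $q$ correctly and exploiting the two special features of indicator maximal functions — namely $0\le M\chi_Q\le 1$ everywhere, and $M\chi_Q\gtrsim 1$ on any fixed dilate $Q^{**}$ — so that the genuinely analytic input reduces to a single off-the-shelf application of the Fefferman--Stein inequality. No delicate estimate is required beyond that.
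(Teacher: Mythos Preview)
Your approach --- reduce to a single application of the Fefferman--Stein vector-valued maximal inequality, exploiting the two special features $0\le M\chi_{Q_k}\le 1$ everywhere and $M\chi_{Q_k}\gtrsim c_n$ on $Q_k^{**}$ --- is exactly the one the paper has in mind (the paper simply says the lemma ``is just an application of the Fefferman--Stein vector-valued inequality'').

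There is, however, a bookkeeping slip in your execution that you should fix. When you pass from
\[
\Big\|\sum_k \lambda_k (M\chi_{Q_k})^{q}\Big\|_{L^p}
=
\Big\|\sum_k \bigl(M(\mu_k\chi_{Q_k})\bigr)^{q}\Big\|_{L^p}
\]
to an expression of the form $\bigl\|(\sum_k (Mg_k)^q)^{1/q}\bigr\|_{L^r}$, the outer Lebesgue exponent becomes $r=qp$, not $r=p$: indeed $\|F\|_{L^p}=\|F^{1/q}\|_{L^{qp}}^{\,q}$. Fefferman--Stein in $L^r(\ell^q)$ then requires $r=qp>1$ and $q>1$, i.e.\ one needs
\[
\max\Bigl(1,\tfrac1p\Bigr)<q\le\gamma,
\]
which is exactly guaranteed by the hypothesis $\gamma>\max(1,1/p)$; the cleanest choice is simply $q=\gamma$. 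Your stated conditions ``$1<q<\gamma$ and $q<\gamma p$'' and later ``$p/q\in(1,\infty)$'' do not secure $qp>1$ (the latter would force $p>q>1$ and fail whenever $p\le1$). Once the exponent is corrected, the rest of your argument --- including the pointwise bound $\chi_{Q_k^{**}}\le c_n^{-\gamma}(M\chi_{Q_k})^\gamma$ for the ``in particular'' clause --- is fine.
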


We will also make use of the following result: 
\begin{lemma}\label{lm.2A00}
Let $p \in(0,\infty)$.
Assume that $q \in (p,\infty] \cap [1,\infty]$.
Suppose that we are given
a sequence of cubes $\{Q_j\}_{j=1}^\infty$
and a sequence of non-negative $L^q$-functions $\{F_j\}_{j=1}^\infty$.
Then
\[
\Big\|
\sum_{j=1}^\infty \chi_{Q_j}F_j
\Big\|_{L^p}
\lesssim
\Big\|
\sum_{j=1}^\infty 
\left(
\frac{1}{|Q_j|}
\int_{Q_j}F_j(y)^q\,dy\right)^{1/q}
\chi_{Q_j}
\Big\|_{L^p}.
\]
\end{lemma}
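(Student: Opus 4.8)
The plan is to prove Lemma \ref{lm.2A00} by a standard duality-free pointwise-plus-maximal-function argument, reducing the estimate to Lemma \ref{lm.2B00}. First I would fix $x \in \mathbb{R}^n$ and examine the sum $\sum_{j} \chi_{Q_j}(x) F_j(x)$. The difficulty is that $F_j(x)$ is a pointwise value of an $L^q$ function, which need not be controlled by an $L^q$ average; so the natural idea is to dominate $\chi_{Q_j}(x) F_j(x)$ by something involving the averaged quantity $\big(\frac{1}{|Q_j|}\int_{Q_j} F_j^q\big)^{1/q}$ times a maximal-type factor. Here I would split into the two cases $q = \infty$ and $q \in [1,\infty)$.

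In the case $q = \infty$ the inequality is essentially immediate: for $x \in Q_j$ we have $F_j(x) \le \|F_j\|_{L^\infty(Q_j)} = \big(\frac{1}{|Q_j|}\int_{Q_j} F_j^\infty\big)^{1/\infty}$ in the degenerate sense, i.e. $\chi_{Q_j}(x) F_j(x) \le \mathrm{ess\,sup}_{Q_j} F_j \cdot \chi_{Q_j}(x)$, and summing and taking $L^p$ norms gives the claim with constant $1$. For $q \in [1,\infty)$, the key observation is the following: since $q > p$ is not assumed for the pointwise step, I instead exploit $q \ge 1$ to run a Kolmogorov / weak-type or, more simply, an interpolation-of-the-sum argument. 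Concretely, write $\beta_j := \big(\frac{1}{|Q_j|}\int_{Q_j} F_j(y)^q\,dy\big)^{1/q}$. One cannot bound $F_j(x)$ by $\beta_j$ pointwise, but one can bound the \emph{$L^p$ norm of the sum} by splitting into the contribution where $F_j(x)$ is comparable to $\beta_j$ and where it is much larger; on the latter set Chebyshev in the variable $y\in Q_j$ together with the fact that $|\{y\in Q_j: F_j(y) > \lambda\}| \lesssim \lambda^{-q}\beta_j^q |Q_j|$ lets one trade the large pointwise values for a small measure factor, which after raising to the power $p$ and integrating is summable. This is exactly the mechanism by which $\sum_j \chi_{Q_j} F_j$ inherits the size of $\sum_j \beta_j \chi_{Q_j}$.

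More carefully, the cleanest route is: for each $j$ and each threshold, use that
\[
\chi_{Q_j}(x)\,F_j(x)
\le \beta_j\,\chi_{Q_j}(x)
+ \chi_{Q_j}(x)\,\big(F_j(x)-\beta_j\big)_+,
\]
and for the second term bound, for every $x$,
\[
\big(F_j(x)-\beta_j\big)_+
\le \Big(\frac{1}{|\{y \in Q_j : F_j(y) \le F_j(x)\}|} \int_{\{y\in Q_j: F_j(y)\le F_j(x)\}} \big(F_j(x)-F_j(y)\big)\,dy\Big),
\]
which is controlled via Hölder and Chebyshev by $C\,\beta_j\,\big(|Q_j|/|\{y\in Q_j: F_j(y)\le F_j(x)\}|\big)^{1/q}$; since the latter set has measure at least a fixed fraction of $|Q_j|$ when $F_j(x)$ exceeds the median, and the contribution below the median is already $\le \beta_j\chi_{Q_j}$, one concludes $\chi_{Q_j} F_j \le C\,\beta_j\,\chi_{Q_j}$ pointwise \emph{after} discarding a set of measure $\le \frac12|Q_j|$ — equivalently $\chi_{Q_j}F_j \le C\beta_j (M\chi_{Q_j})^{1/q'}$ on a comparable set, but it is simpler to observe directly that the bad part is absorbed. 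Taking $L^p$ norms of $\sum_j \chi_{Q_j}F_j \le C\sum_j \beta_j \chi_{Q_j}$ finishes the proof; no appeal to Lemma \ref{lm.2B00} is even needed in this formulation, though one could alternatively phrase the bad-set argument through the maximal function and invoke Lemma \ref{lm.2B00} to handle the resulting $(M\chi_{Q_j})^\gamma$ terms.

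The main obstacle I anticipate is making the ``trade large pointwise values for small measure'' step rigorous when the $Q_j$ overlap heavily: one must be careful that the exceptional sets (where $F_j$ is atypically large on $Q_j$) do not accumulate. The resolution is that the estimate is entirely pointwise in the inequality $\chi_{Q_j}F_j \le C\beta_j\chi_{Q_j}$ on the good part of $Q_j$ (which has measure $\ge \tfrac12|Q_j|$), and the remaining ``large'' part of $F_j$ on $Q_j$ satisfies $\int_{Q_j} \big(F_j \mathbf 1_{F_j > C\beta_j}\big)^q \lesssim \beta_j^q |Q_j|$, so after raising to power $\min(p,1)$ (using the quasi-triangle inequality in $L^p$ when $p<1$) and summing over $j$, this tail is dominated by $\|\sum_j \beta_j \chi_{Q_j}\|_{L^p}$ as well; the hypothesis $q \ge 1$ is what allows Hölder/Chebyshev in this step, and $q > p$ (or $q=\infty$) guarantees the resulting exponents are in the admissible range. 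I would organize the final write-up so that the $q=\infty$ case is dispatched in one line and the $q<\infty$ case rests on the single pointwise inequality plus a short summation.
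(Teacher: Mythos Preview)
Your $q=\infty$ case is fine. For $q<\infty$, however, the central step has a genuine gap. After you decouple the ``bad'' contributions via the (quasi-)triangle inequality you arrive at
\[
\Big\|\sum_j \chi_{Q_j} F_j \,\mathbf{1}_{\{F_j>C\beta_j\}}\Big\|_{L^p}^{\min(p,1)}
\;\le\; \sum_j \|\chi_{Q_j}F_j\|_{L^p}^{\min(p,1)}
\;\le\; \sum_j \|\beta_j\chi_{Q_j}\|_{L^p}^{\min(p,1)},
\]
and you then assert that this is $\lesssim \big\|\sum_j\beta_j\chi_{Q_j}\big\|_{L^p}^{\min(p,1)}$. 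That inequality is the wrong way round in general. For $p<1$ take $N$ identical cubes $Q_j=Q$ with $\beta_j=1$: the decoupled sum equals $N|Q|$ while $\big\|\sum_j\beta_j\chi_{Q_j}\big\|_{L^p}^{p}=N^{p}|Q|$. For $p>1$ take $N$ pairwise disjoint unit cubes with $\beta_j=1$: the decoupled sum equals $N$ while $\big\|\sum_j\beta_j\chi_{Q_j}\big\|_{L^p}=N^{1/p}$. Once you separate the terms in $j$ you cannot re-couple them on the right-hand side; this is precisely the overlap obstacle you flagged, and your proposed resolution does not actually overcome it. (The intermediate claim $(F_j(x)-\beta_j)_+\le C\beta_j\,\big(|Q_j|/|\{F_j\le F_j(x)\}|\big)^{1/q'}$ is also not valid: combined with your median remark it would force $F_j(x)\le C\beta_j$ for a.e.\ $x$ above the median, which fails whenever $F_j$ is unbounded on $Q_j$.)

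For comparison, the paper does not argue this lemma from scratch; it simply cites \cite{HuMe12} for $0<p\le1$ and \cite{Nakai-Sawano-2014}, \cite{Sawano13} for $1<p<\infty$. A standard route for $1<p<\infty$ is duality: pair with $g\ge0$, $\|g\|_{L^{p'}}=1$, apply H\"older on each $Q_j$ with exponents $(q,q')$ to obtain
\[
\int_{Q_j}F_j\,g
\;\le\;\beta_j\,|Q_j|\Big(\frac{1}{|Q_j|}\int_{Q_j}g^{q'}\Big)^{1/q'}
\;\le\;\int_{{\mathbb R}^n}\beta_j\chi_{Q_j}\,M^{(q')}g,
\]
sum over $j$, and conclude using that $q>p$ gives $q'<p'$, so $M^{(q')}$ is bounded on $L^{p'}$. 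The case $0<p\le1$ genuinely requires a different mechanism (see the cited references) and cannot be handled by decoupling the sum in $j$.
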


\begin{proof}
See
\cite{HuMe12}
for the case of $0<p \le 1$
and \cite{Nakai-Sawano-2014}, \cite{Sawano13}
for the case of $1<p<\infty$.
\end{proof}

\subsection{Reductions in the proof of main results}
To start the proof of the main results, let  $p_1,\ldots,p_m$ and $p$ be given as in  Theorems \ref{ThmMain}, \ref{ThmMain-2} or \ref{ThmMain-3} and note that 
$H^{p_l}\cap\mathcal{O}_N(\mathbb{R}^n)$ is dense in $H^{p_l}$ for $1\le l\le m$ and $0<p_l<\infty$. Recall the integer $N\gg s$ and fix 
$f_l\in H^{p_l}\cap\mathcal{O}_N(\mathbb{R}^n)$ for which $0<p_l<\infty$. By Theorem \ref{th-molecule}, 
we can decompose $f_l = \sum_{k_l =1}^\infty \lambda_{l,k_l} a_{l,k_l}$, where
$\{\lambda_{l,k_l}\}_{k_l=1}^\infty$ is a non-negative finite sequence
and
$\{a_{l,k_l}\}_{k_l=1}^\infty$ is a sequence of $(p_l,\infty)$-atoms
such that $a_{l,k_l}$ is supported in a cube $Q_{l,k_l}$ satisfying
\[
|a_{l,k_l}| \le \chi_{Q_{l,k_l}},\quad
\int_{{\mathbb R}^n}x^\alpha a_{l,k_l}(x)\,dx=0,\quad \ 
|\alpha| \le N
\]
and that
\begin{equation}\label{crucial77}
\Big\|%\left(
\sum_{k_l=1}^\infty %\left(
\lambda_{l,k_l}\chi_{Q_{l,k_l}}
%\right)^{u}
%\right)^{\frac1{u}}
\Big\|_{L^{p_l}}
\lesssim \|f_l\|_{H^{p_l}}.
\end{equation}
If $p_l=\infty$ and $f_l\in L^{\infty}$, then we can conventionally rewrite $f_l=\lambda_{l,k_l} a_{l,k_l}$ 
where $\lambda_{l,k_l}=\|f_l\|_{L^{\infty}}$ and $a_{l,k_l}=\|f\|_{L^{\infty}}^{-1}f$ is 
an $(\infty,\infty)$-atom supported in $Q_{l,k_l}=\mathbb{R}^n$. In this case the summation in \eqref{crucial77} is ignored since there is only one summand.

By the
multi-sublinearity of $M_\phi\circ \mathcal{T}_\sigma$, we can estimate
\[
M_\phi\circ \mathcal{T}_\sigma(f_1,\ldots,f_m)(x)\le 
\sum_{k_1,\ldots,k_m=1}^\infty
\Big(\prod_{l=1}^m \lambda_{l,k_l}\Big)
 M_\phi \circ {\mathcal T}_\sigma(a_{1,k_1},\ldots,a_{m,k_m})(x).
\]

To prove Theorems~\ref{ThmMain}, ~\ref{ThmMain-2}, and~\ref{ThmMain-3}, 
 it now suffices to establish the following result:
\begin{proposition}
\label{LM.Key-31}
 Let ${\mathcal T}_\sigma$ be the operator 
defined 
in \eqref{eq.CalZygOPT}, \eqref{eq.CalZygOPT-2} or \eqref{eq.CalZygOPT-3}. 
Let $p_1,\ldots,p_m$ and $p$ be given as in corresponding Theorems \ref{ThmMain}, \ref{ThmMain-2} or \ref{ThmMain-3}. 
Then we have 
 \begin{equation}\label{eq.PWEST}
\Big\|\sum_{k_1,\ldots,k_m=1}^\infty
\Big(\prod_{l=1}^m \lambda_{l,k_l}\Big)
 M_\phi \circ {\mathcal T}_\sigma(a_{1,k_1},\ldots,a_{m,k_m})\Big\|_{L^p}
\lesssim
\prod_{l=1}^m
\Big\|
\sum_{k_l=1}^\infty
\lambda_{l,k_l}\chi_{Q_{l,k_l}}
\Big\|_{L^{p_l}}.
\end{equation}
\end{proposition}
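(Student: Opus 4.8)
The plan is to dualize the $H^p$-estimate into a pointwise/atomic estimate via the smooth maximal function $M_\phi$, then separate the sum over the $m$-tuples $(k_1,\ldots,k_m)$ into ``local'' and ``global'' parts according to how the cubes $Q_{1,k_1},\ldots,Q_{m,k_m}$ sit relative to one another. Fix atoms $a_{l,k_l}$ supported in cubes $Q_{l,k_l}$ of side length $\ell_{l,k_l}$. The first reduction is to single out, for each $m$-tuple, the index $l_0=l_0(k_1,\ldots,k_m)$ achieving the smallest cube, so that all other cubes are at least as large; after relabeling we may assume $Q_{l_0,k_{l_0}}$ is the smallest. Let $Q=Q^{**}_{l_0,k_{l_0}}$ be the relevant enlargement from \eqref{eq:QStar}, and split $\mathbb R^n = Q \sqcup (\mathbb R^n\setminus Q)$. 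For the local piece we estimate $M_\phi\circ\mathcal T_\sigma(a_{1,k_1},\ldots,a_{m,k_m})$ restricted to $Q$ by putting it into $L^q$ for a suitable $q\in(p,\infty]\cap[1,\infty]$ (using the $L^{p_1}\times\cdots\times L^{p_m}\to L^p$-type boundedness of $\mathcal T_\sigma$, available in the Coifman--Meyer and product-type cases, together with the trivial $L^\infty$-bounds on the atoms $|a_{l,k_l}|\le\chi_{Q_{l,k_l}}$), then invoke Lemma~\ref{lm.2A00} to pass from $\chi_Q F$ to the averaged quantity $\big(|Q|^{-1}\int_Q F^q\big)^{1/q}\chi_Q$, and finally Lemma~\ref{lm.2B00} to absorb the enlargement $Q^{**}$ back to $Q_{l_0,k_{l_0}}$.

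For the global piece, where $x\notin Q^{**}_{l_0,k_{l_0}}$, the key input is a pointwise decay estimate for $M_\phi\circ\mathcal T_\sigma(a_{1,k_1},\ldots,a_{m,k_m})(x)$: using the cancellation of the atoms (Taylor expansion of the kernel up to order $N\gg s$) together with the smoothness/size estimates on the Calder\'on--Zygmund kernels, one gets a gain of the form $(\ell_{l_0,k_{l_0}}/|x-c_{l_0,k_{l_0}}|)^{n+N+1}$ or similar, times factors controlling the contributions of the larger cubes. The cancellation hypothesis \eqref{eq.TmCan} on $\mathcal T_\sigma$ itself (for $|\alpha|\le s$) is what allows the output atoms/molecules to have enough vanishing moments that the resulting pieces land in $H^p$ rather than merely $L^p$; concretely, one rewrites $\mathcal T_\sigma(a_{1,k_1},\ldots,a_{m,k_m})$ as a fixed multiple of an $H^p$-molecule centered at $Q_{l_0,k_{l_0}}$ and controls its $H^p$-norm by $\chi_{Q^{**}_{l_0,k_{l_0}}}$ plus a rapidly decaying tail, the latter summing by Lemma~\ref{lm.2B00}. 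After these pointwise bounds, one is left with a purely ``off-diagonal'' multilinear summation: sum over $k_{l_0}$ first using the atomic bound $\|\sum_{k_{l_0}}\lambda_{l_0,k_{l_0}}\chi_{Q_{l_0,k_{l_0}}}\|_{L^{p_{l_0}}}\lesssim\|f_{l_0}\|_{H^{p_{l_0}}}$, and then the remaining $m-1$ sums by H\"older's inequality in the exponents $1/p=\sum 1/p_l$, again with Lemma~\ref{lm.2B00} controlling the cube enlargements. In the mixed-type case \eqref{eq.CalZygOPT-3} one runs the same argument within each partition block $I_g^\rho$, and condition \eqref{160902-1} guarantees that the exponent $\inf_{l\in I_g^\rho}p_l$ is finite so the block-wise atomic decomposition and H\"older step are legitimate; this is precisely why $p_l=\infty$ must be excluded in the pure product-type case.

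The main obstacle I expect is organizing the case analysis on the relative sizes and positions of the $m$ cubes so that the various summations actually close. When the smallest cube is far from the others, one must decide which cube's ``center of mass'' the output molecule is attached to and make sure the moment/decay gains are all expressed relative to that cube; when several cubes are comparable and overlapping, the local $L^q$-estimate must be strong enough, which forces a careful choice of $q$ (needing $q>p$ and $q\ge 1$ simultaneously, hence the hypothesis in Lemma~\ref{lm.2A00}) and a careful bookkeeping of how the $L^\infty$-sizes $|a_{l,k_l}|\le\chi_{Q_{l,k_l}}$ interact with the $L^{p_l}$-normalizations. A secondary but genuine difficulty is verifying that $\mathcal T_\sigma(a_{1,k_1},\ldots,a_{m,k_m})$, which a priori is only a distribution, is in fact a well-defined integrable function with the claimed decay and moment properties; Remark (2) and Lemma~\ref{lm.3A1} (referenced but not yet displayed) are meant to handle exactly this, so the plan is to quote them and concentrate on the quantitative estimates. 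Once the pointwise decay and the local $L^q$-bound are in hand, the passage to \eqref{eq.PWEST} is a mechanical combination of Lemmas~\ref{lm.2A00} and~\ref{lm.2B00} with H\"older's inequality, carried out uniformly over the finitely many partitions appearing in \eqref{eq.CalZygOPT-2} and \eqref{eq.CalZygOPT-3}.
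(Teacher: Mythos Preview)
Your broad architecture matches the paper's: single out the smallest cube $R_{\vec k}$, split into $R_{\vec k}^{**}$ versus its complement, handle the local piece with an $L^r$-bound plus Lemma~\ref{lm.2A00}, and invoke the cancellation \eqref{eq.TmCan} for the far piece. But two points in your write-up would not close as stated.

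First, the summation step ``sum over $k_{l_0}$ first, then the remaining $m-1$ sums by H\"older'' cannot be executed, because $l_0=l_0(k_1,\dots,k_m)$ depends on the full $m$-tuple; there is no single index to isolate. The paper's device for decoupling is different: every estimate---both the local $L^r$-bound (Lemma~\ref{lem:160726-51}) and the global pointwise bound (Lemma~\ref{lm.3A10}(3))---is arranged to be a \emph{product over all $l$} of quantities of the form $\big(M\chi_{Q_{l,k_l}}\big)^{\gamma}$, possibly with an $\inf_{z\in R_{\vec k}^*}$ in front. Because each factor depends only on its own $k_l$, the $m$-fold sum factors as $\prod_{l}\sum_{k_l}$ \emph{before} one takes the $L^p$-norm, and H\"older with exponents $p_l$ then separates the norms. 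The infimum trick is what lets you pass from a bound localized on $R_{\vec k}^{**}$ (via Lemma~\ref{lm.2B00}, using $(n+s+1)p/n>1$) back to a genuine pointwise product. Your decay bound $(\ell_{l_0}/|x-c_{l_0}|)^{n+N+1}$ alone does not factor this way.

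Second, the paper does not recast $\mathcal T_\sigma(a_1,\dots,a_m)$ as an $H^p$-molecule; controlling $\|M_\phi\circ\mathcal T_\sigma(a_1,\dots,a_m)\|_{L^p}$ term-by-term would only yield $\big(\sum_{\vec k}\prod_l\lambda_{l,k_l}^p\big)^{1/p}$ when $p\le 1$, which is \emph{not} dominated by the right side of \eqref{eq.PWEST}. Instead, \eqref{eq.TmCan} is used inside the convolution defining $M_\phi$: one Taylor-expands $\phi_t(x-\,\cdot\,)$ at $c_{l_0}$ to order $s$ (see \eqref{eq.3C01}--\eqref{eq.3C02}), gaining $|y-c_{l_0}|^{s+1}/t^{n+s+1}$, and then splits the $y$-integral over $R_{\vec k}^*$ and its complement. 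The resulting bound \eqref{eq.3B12} is again a product of powers of $M\chi_{Q_{l,k_l}}$, and the proof concludes via Lemma~\ref{lm.2B00} and H\"older as above. In the product and mixed cases (Sections~4--5) the same factorized shape is maintained, with the extra factors $M^{(m)}\circ T_{\sigma_l}(a_{l,k_l})$ or $M^{(G)}\circ T_{\sigma_{I_g}}$ handled by Lemma~\ref{lm-161031-1}.
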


Notice that in view of 
\eqref{crucial77} and Proposition \ref{LM.Key-31}, 
one obtains the required estimate 
$$
\| {\mathcal T}_\sigma(f_1,\dots , f_m) \|_{H^p}
=
\|
M_\phi\circ \mathcal{T}_\sigma(f_1,\ldots,f_m)
\|_{L^p}
\lesssim \| f_1\|_{H^{p_1}} \cdots \| f_m\|_{H^{p_m}}\, . 
$$

We may therefore focus on the proof of Proposition~\ref{LM.Key-31}. 
In the sequel we will prove \eqref{eq.PWEST}. 
Its proof will depend on whether ${\mathcal T}_\sigma$
is of type \eqref{eq.CalZygOPT}, \eqref{eq.CalZygOPT-2} or \eqref{eq.CalZygOPT-3}. The detail proof for each  type is discussed in subsequent sections.

%=============================================================%
\section{The Coifman-Meyer type}
%=============================================================%

Throughout this section, $\mathcal{T}_\sigma$ denotes for the operator defined in  \eqref{eq.CalZygOPT}. The main purpose of this section is to establish \eqref{eq.PWEST} for  $\mathcal{T}_\sigma$.

\subsection{Fundamental estimates for the Coifman-Meyer type}

We treat the case of Coifman-Meyer multiplier operators whose symbols satisfy \eqref{CMcond}. 
The study of such operators was initiated by Coifman and Meyer \cite{CM2}, \cite{CM3} 
and was later pursued by Grafakos and Torres \cite{GrafakosTorresAdvances}; see also \cite{G} for an account.
Denoting by $K$ the inverse Fourier transform of $\sigma$, 
in view of \eqref{CMcond},
we have
\[
|\partial^{\beta}_{y}
K(y_1,\ldots, y_m)|
\lesssim
\big( \sum_{i=1}^m |y_i| \big)^{-mn -|\beta|},\quad (y_1,\ldots,,y_m)\ne (0,\ldots,0)
\]
for all 
$\beta=(\beta_1,\ldots,\beta_m)\in\N_0{}^{mn}=(\N_0{}^n)^m$
and $|\beta|\le N$.

Examining carefully the smoothness of the kernel,
we obtain the following estimates:

\begin{lemma}
\label{lm.3A1}
Let $a_k$ be $(p_k,\infty)$-atoms supported in $Q_k$ for all $1\le k\le m$. Let $\Lambda$ be a non-empty subset of $\{1,\ldots,m\}$. Then we have
\[
|\mathcal{T}_\sigma(a_1,\ldots,a_m)(y)|
\le
\dfrac{\min\{\ell(Q_k) : k\in \Lambda\}^{n+N+1}}
{\big(\sum_{k\in \Lambda}|y-c_k|\big)^{n+N+1}}
\]
for all $y\notin \cup_{k\in \Lambda}Q_k^*$.
\end{lemma}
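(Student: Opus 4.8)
The plan is to exploit the size estimate on $K$ together with the moment cancellation of the atoms, following the standard Calder\'on--Zygmund kernel argument but carried out in all variables indexed by $\Lambda$ simultaneously. First I would fix $y \notin \bigcup_{k\in\Lambda} Q_k^*$ and write
\[
\mathcal{T}_\sigma(a_1,\ldots,a_m)(y)
=
\int_{(\mathbb{R}^n)^m} K(y-z_1,\ldots,y-z_m)\prod_{l=1}^m a_l(z_l)\, d\vec{z}.
\]
For each $k \in \Lambda$, since $a_k$ has vanishing moments up to order $N$ and is supported in $Q_k$, I would subtract the degree-$N$ Taylor polynomial of $K$ in the $z_k$-variable around $z_k = c_k$ (the center of $Q_k$); this can be done one variable at a time, peeling off the $\Lambda$-variables in some fixed order, so that after all subtractions the integrand against $\prod_{l} a_l(z_l)$ is unchanged but $K$ gets replaced by an iterated remainder. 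The remainder is controlled by a product over $k \in \Lambda$ of $\ell(Q_k)^{N+1}$ times a derivative $\partial^\beta_z K$ of total order $|\beta| = |\Lambda|(N+1)$ evaluated at an intermediate point $(y - \tilde z_1, \ldots, y - \tilde z_m)$ with $\tilde z_k$ on the segment from $c_k$ to $z_k \in Q_k$.

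Next I would insert the kernel derivative bound
\[
|\partial^\beta_z K(w_1,\ldots,w_m)| \lesssim \Big(\sum_{i=1}^m |w_i|\Big)^{-mn-|\beta|}
\]
at $w_i = y - \tilde z_i$. The key geometric observation is that for $k \in \Lambda$ and $\tilde z_k \in Q_k$, the condition $y \notin Q_k^* = 3\sqrt n Q_k$ forces $|y - \tilde z_k| \gtrsim |y - c_k| + \ell(Q_k) \gtrsim |y - c_k|$, and in particular $\sum_i |y - \tilde z_i| \ge \sum_{k\in\Lambda} |y - \tilde z_k| \gtrsim \sum_{k\in\Lambda} |y - c_k|$. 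Discarding the non-$\Lambda$ variables from the sum only makes the denominator smaller, hence the bound larger, so this is legitimate. Then
\[
|\mathcal{T}_\sigma(a_1,\ldots,a_m)(y)|
\lesssim
\prod_{k\in\Lambda}\ell(Q_k)^{N+1}
\cdot
\frac{1}{\big(\sum_{k\in\Lambda}|y-c_k|\big)^{mn+|\Lambda|(N+1)}}
\cdot
\prod_{l=1}^m \|a_l\|_{L^1},
\]
and since $\|a_l\|_{L^1} \le |Q_l|$ contributes the missing powers; I would absorb the $L^1$ norms of the non-$\Lambda$ atoms and trade the extra decay to land on the stated exponent $n+N+1$ in both numerator and denominator, using that $\min_{k\in\Lambda}\ell(Q_k)^{|\Lambda|(N+1)} \le \prod_{k\in\Lambda}\ell(Q_k)^{N+1}$ and bounding the remaining denominator powers by $\big(\sum_{k\in\Lambda}|y-c_k|\big)$ raised to the appropriate surplus times $\min_{k\in\Lambda}\ell(Q_k)$ raised to a matching power, after noting $|y-c_k|$ may be large so decay is plentiful.

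The main obstacle I anticipate is bookkeeping rather than conceptual: carrying out the multi-variable Taylor subtraction carefully so that each step is justified by the moment condition of exactly one atom (the atoms have moments vanishing up to order $N$, and we use order-$N$ Taylor remainders, so $N+1$ derivatives land on $K$ per $\Lambda$-variable, which is fine since $|\Lambda|(N+1) \le m(N+1) \le N_{\mathrm{kernel}}$ for the kernel smoothness available), and then matching the final power of $\min_{k\in\Lambda}\ell(Q_k)$ and of $\sum_{k\in\Lambda}|y-c_k|$ to the clean exponent $n+N+1$ claimed. One must also check the elementary but slightly fiddly inequality that for $y \notin 3\sqrt n Q_k$ and $z \in Q_k$ one has $|y-z| \approx |y - c_k| + \mathrm{diam}(Q_k) \gtrsim |y-c_k|$, which guarantees no blow-up from the intermediate Taylor points. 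None of this requires anything beyond the kernel estimate already displayed and the definition of $(p_k,\infty)$-atoms.
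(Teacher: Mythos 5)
There is a genuine gap, and it comes from the step where you ``discard the non-$\Lambda$ variables from the sum'' in the kernel bound and then integrate the remaining atoms to get $\prod_{l}\|a_l\|_{L^1}\le\prod_l|Q_l|$. Once you drop $|y-\tilde z_l|$ for $l\notin\Lambda$ from the denominator, the only source of integrability in those variables is the factor $|Q_l|$, and for $l\notin\Lambda$ there is no relation whatsoever between $\ell(Q_l)$ and $\sum_{k\in\Lambda}|y-c_k|$: the cube $Q_l$ may be enormous, and for a $(\infty,\infty)$-atom it is all of $\mathbb{R}^n$, so $\|a_l\|_{L^1}=\infty$. Your final bookkeeping step, which must absorb $\prod_{l\notin\Lambda}\ell(Q_l)^{n}$ into powers of $\sum_{k\in\Lambda}|y-c_k|$, therefore fails. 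The paper avoids this by keeping $\sum_{j=2}^m|y-y_j|$ in the denominator and integrating the $(m-1)n$ remaining variables against the kernel decay itself; that integration is exactly what converts the exponent $mn+N+1$ into the clean $n+N+1$, with no appeal to the measures of the other cubes.

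The second problem is the iterated Taylor subtraction in \emph{all} variables of $\Lambda$: it places $|\Lambda|(N+1)$ derivatives on $K$, while the kernel estimate is only guaranteed for $|\beta|\le N$ (there is no separate, larger ``$N_{\mathrm{kernel}}$'' in the paper), so for $|\Lambda|\ge 2$ you are using smoothness you do not have. It is also unnecessary. The paper's proof uses the cancellation of a \emph{single} atom, namely the one in $\Lambda$ with smallest side length (say $Q_1$): one order-$N$ Taylor subtraction in the $y_1$-variable at $c_1$ produces the factor $\ell(Q_1)^{N+1}$, the geometric fact $|y-y_k|\approx|y-c_k|$ for $k\in\Lambda$ (valid since $y\notin Q_k^*$) inserts $\sum_{k\in\Lambda}|y-c_k|$ into the denominator, and integrating out $y_1\in Q_1$ supplies the remaining $\ell(Q_1)^{n}$. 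Your geometric observations are correct, but the proof needs to be restructured along these lines to close.
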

\begin{proof}
We may suppose that $\Lambda=\{1,\ldots,r\}$ for some $1\le r\le m$ and that
\[
\ell(Q_1) = \min\{\ell(Q_k) : k\in \Lambda\}.
\]
Let $c_k$ be the center of $Q_k$ and fix $y\notin \cup_{k\in \Lambda}Q_k^*$. Using the cancellation of $a_1$ we can rewrite
\begin{align}
\notag
\mathcal{T}_\sigma(a_1,\ldots,a_m)(y) =&
 \int_{\mathbb{R}^{mn}}K(y-y_1,\ldots,y-y_m)a_1(y_1)\cdots a_m(y_m)d\vec{y}\\
\notag  
 =&
 \int_{\mathbb{R}^{mn}}\big[K(y-y_1,\ldots,y-y_m)-P_N(y,y_1,y_2,\ldots,y_m)\big]
 a_1(y_1)\cdots a_m(y_m)d\vec{y}\\
 =&
 \int_{\mathbb{R}^{mn}}K^1(y,y_1,y_2,\ldots,y_m)
 a_1(y_1)\cdots a_m(y_m)d\vec{y},  \label{eq.3A5}
\end{align}
where 
\begin{equation*}
P_N(y,y_1,y_2,\ldots,y_m) = \sum_{|\alpha|\le N}\frac{1}{\alpha!}
\partial^{\alpha}_{1}K(y-c_1,y-y_2,\ldots,y-y_m)(c_1-y_1)^{\alpha}
\end{equation*}
is the Taylor polynomial of degree $N$ of $K(y-\cdot,y-y_2,\ldots,y-y_m)$ at $c_1$ and
\begin{equation}
\label{eq.3A6}
K^1(y,y_1,\ldots,y_m) = K(y-y_1,\ldots,y-y_m)-P_N(y,y_1,y_2,\ldots,y_m).
\end{equation}

By the smoothness condition of the kernel and the fact that
\[
|y-y_k|\approx |y-c_k| %,  \quad\forall k\in ,
\]
for all $k \in\Lambda$ and $y_k\in Q_k$
we can estimate
\begin{align*}
\big|K(y,y_1,\ldots,y_m)-P_N(y,c_1,y_2,\ldots,y_m)\big|\lesssim&
|y_1-c_1|^{N+1}\Big(
\sum_{k\in \Lambda}|y-c_k|+\sum_{j=2}^m|y-y_j|
\Big)^{-mn-N-1}.
\end{align*}
Thus,
\begin{align*}
|\mathcal{T}_\sigma(a_1,\ldots,a_m)(y)| \lesssim&
\int_{\mathbb{R}^{mn}}\frac{|y_1-c_1|^{N+1}|a_1(y_1)|\cdots |a_m(y_m)|}
 {\Big(
\sum_{k\in \Lambda}|y-c_k|+\sum_{j=2}^m|y-y_j|
\Big)^{mn+N+1}}
 d\vec{y}\\
 \lesssim&
 \int_{\mathbb{R}^{(m-1)n}}\frac{
 \ell(Q_1)^{n+N+1}}
 {\Big(
\sum_{k\in \Lambda}|y-c_k|+\sum_{j=2}^m|y_j|
\Big)^{mn+N+1}}
 dy_2\cdots dy_m\\
 \lesssim&
 \frac{\ell(Q_1)^{n+N+1}}
 {\Big(
\sum_{k\in \Lambda}|y-c_k|
\Big)^{n+N+1}}.
\end{align*}
\end{proof}

%%%%
\begin{lemma}\label{lem:160726-51}
Let $a_k$ be $(p_k,\infty)$-atoms supported in $Q_k$ for all $1\le k\le m$.
Suppose $Q_1$ is the cube such that $\ell(Q_1) =  \min\{\ell(Q_k) : 1\le k\le m\}$.
Then for fixed $1\le r<\infty$ and $j\in \mathbb{N}$, we have
\begin{align}
\label{eq.3A2}
\|\mathcal{T}_\sigma(a_1,\ldots,a_m)\chi_{Q_1^{**}}\|_{L^{r}}&\lesssim
|Q_1|^{\frac{1}{r}}
\prod_{l=1}^m
\inf_{z\in Q_1^{*}}
M\chi_{Q_l}(z)^\frac{n+N+1}{mn},\\
\label{eq.3A3}
\|M\circ \mathcal{T}_\sigma(a_1,\ldots,a_m)\chi_{Q_1^{**}}\|_{L^{r}}&\lesssim
|Q_1|^{\frac{1}{r}}
\prod_{l=1}^m
\inf_{z\in Q_1^{*}}
M\chi_{Q_l}(z)^\frac{n+N+1}{mn},
\end{align}
%
%\begin{equation}
%\label{eq.3D2}
%\|\mathcal{T}_\sigma(a_1,\ldots,a_m)\|_{\rm BMO}
%\lesssim
%\prod_{l=1}^m
%\inf_{z\in 2^jQ_1^{*}}
%M\chi_{2^jQ_l^{**}}(z)^\frac{n+N+1}{mn}.
%\end{equation}
Furthermore, 
if $Q_0$ is a cube such 
that $\ell(Q_0)\le \ell(Q_1)$  and $2^jQ_0^{**}\cap 2^jQ_l^{**}=\emptyset$ for some $l$, then
\begin{equation}\label{eq.3D3}
\|\mathcal{T}_\sigma(a_1,\ldots,a_m)\chi_{2^jQ_0^{**}}\|_{L^\infty}
\lesssim
\prod_{l=1}^m
\inf_{z\in 2^jQ_0^{*}}
M\chi_{2^jQ_l^{**}}(z)^\frac{n+N+1}{mn}.
\end{equation}
In particular, 
under the above assumption, 
\begin{align}\label{eq.3D4}
\Big(
\frac{1}{|2^jQ_0^{**}|}
\int_{2^jQ_0^{**}}
|\mathcal{T}_\sigma(a_1,\ldots,a_m)(y)|^rdy
\Big)^{\frac1r}
&\lesssim
\prod_{l=1}^m
\inf_{z\in 2^jQ_0^{*}}
M\chi_{2^jQ_l^{**}}(z)^\frac{n+N+1}{mn}.
\end{align}
\end{lemma}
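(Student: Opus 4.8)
The plan is to prove the four displayed estimates by reducing each one to the pointwise kernel bound in Lemma~\ref{lm.3A1} together with the elementary comparison between averages of $\chi_Q$ over a cube and the Hardy--Littlewood maximal function $M\chi_Q$ evaluated at nearby points. I will treat \eqref{eq.3A2} first, then deduce \eqref{eq.3A3} from it, and finally establish \eqref{eq.3D3}, from which \eqref{eq.3D4} is immediate by integrating.

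For \eqref{eq.3A2}, I first split according to whether the other cubes $Q_l$ are close to $Q_1$ or far. If $Q_l^{**}\cap Q_1^{**}=\emptyset$ for some $l$, then on $Q_1^{**}$ we are entirely outside $\bigcup_{k\in\Lambda}Q_k^*$ for a suitable $\Lambda$ containing that index $l$ (and $1$), so Lemma~\ref{lm.3A1} gives a uniform pointwise bound on $\mathcal T_\sigma(a_1,\dots,a_m)$ by a product of factors of the form $\ell(Q_1)^{n+N+1}/(\text{distance})^{n+N+1}$; taking the $L^r(Q_1^{**})$ norm contributes a factor $|Q_1|^{1/r}$ and the distance factors translate into $\inf_{z\in Q_1^*}M\chi_{Q_l}(z)$ raised to the advertised power (using $\frac{n+N+1}{mn}$ so the $m$ factors multiply out correctly, and noting $N\gg mn$ so these exponents exceed $1$). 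In the complementary ``close'' case one has $M\chi_{Q_l}(z)\approx 1$ on $Q_1^*$ for the relevant $l$, so those factors on the right-hand side are harmless and one only needs the crude bound $\|\mathcal T_\sigma(a_1,\dots,a_m)\chi_{Q_1^{**}}\|_{L^r}\lesssim |Q_1|^{1/r}$, which follows from the $L^{r_1}\times\cdots\times L^{r_m}\to L^r$ boundedness of Coifman--Meyer operators applied to the $a_l$ (each bounded by $\chi_{Q_l}$) together with the size comparison $|Q_1|\le |Q_l|$. One has to bookkeep which subset $\Lambda$ of indices is ``far'' and interpolate the two regimes, but each piece is routine.

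The estimate \eqref{eq.3A3} follows from \eqref{eq.3A2} by a standard argument: split $M(\mathcal T_\sigma(a_1,\dots,a_m))$ on $Q_1^{**}$ into the contribution of the part of $\mathcal T_\sigma(\vec a)$ supported in a fixed dilate of $Q_1^{**}$ (handled by the $L^r$-boundedness of $M$ and \eqref{eq.3A2} with a slightly larger cube, absorbing the dilation constants into the $M\chi_{Q_l}$ factors via Lemma~\ref{lm.2B00}) and the contribution of the far part, for which the maximal function is controlled by the pointwise bound of Lemma~\ref{lm.3A1} integrated over dyadic annuli, again producing the distance factors that become $\inf_{z\in Q_1^*}M\chi_{Q_l}(z)^{(n+N+1)/(mn)}$. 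For \eqref{eq.3D3}, the hypotheses $\ell(Q_0)\le\ell(Q_1)$ and $2^jQ_0^{**}\cap 2^jQ_l^{**}=\emptyset$ mean every point $y\in 2^jQ_0^{**}$ lies outside $Q_l^*$, so applying Lemma~\ref{lm.3A1} with $\Lambda=\{l\}$ (or a larger far set) gives $|\mathcal T_\sigma(\vec a)(y)|\lesssim \ell(Q_l)^{n+N+1}/\operatorname{dist}(2^jQ_0,Q_l)^{n+N+1}$, and since the distance is comparable to $2^j\ell(Q_l)\,\cdot$(a ratio), this is comparable to $M\chi_{2^jQ_l^{**}}(z)^{(n+N+1)/(mn)}$ for $z\in 2^jQ_0^*$ — multiplying the analogous estimates over all far indices and using the trivial bound $1$ for the near indices (where $M\chi_{2^jQ_l^{**}}\approx 1$ on $2^jQ_0^*$) gives the claim. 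Finally \eqref{eq.3D4} is obtained from \eqref{eq.3D3} simply by bounding the $L^r$-average over $2^jQ_0^{**}$ by the $L^\infty$-norm.

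The main obstacle is the careful dichotomy between ``near'' and ``far'' cubes and verifying in each case that the geometric distance factors coming out of Lemma~\ref{lm.3A1} are genuinely comparable to the corresponding powers of $M\chi_{Q_l}$ (respectively $M\chi_{2^jQ_l^{**}}$) uniformly in the relevant point of $Q_1^*$ (resp.\ $2^jQ_0^*$); this rests on the standard but slightly fiddly fact that for disjoint cubes $P,Q$ with $\ell(P)\le\ell(Q)$ one has $M\chi_Q(z)\approx \bigl(\ell(Q)/\operatorname{dist}(z,Q)\bigr)^n$ for $z\in P^*$, and on choosing $N$ large enough (as fixed in the introduction, $N\gg s$, indeed $N=m(n+1+2s)$) that all the exponents $(n+N+1)/(mn)$ exceed $1$ so that Lemma~\ref{lm.2B00} applies downstream. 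The $L^r$-boundedness of $\mathcal T_\sigma$ for finite exponents $r_l>1$ is used only in the ``near'' regime and is classical.
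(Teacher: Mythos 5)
Your overall strategy --- the near/far dichotomy on the cubes, $L^r$-boundedness of $\mathcal T_\sigma$ in the near regime and kernel decay in the far regime --- is the same as the paper's, but two steps do not go through as written. First, in the far case of \eqref{eq.3A2} you invoke Lemma~\ref{lm.3A1} for $y\in Q_1^{**}$ with a set $\Lambda$ ``containing that index $l$ (and $1$)''. Lemma~\ref{lm.3A1} requires $y\notin\bigcup_{k\in\Lambda}Q_k^*$, and $Q_1^*\subset Q_1^{**}$, so with $1\in\Lambda$ the hypothesis fails on part of the set you integrate over. This particular slip is repairable: take $\Lambda$ to consist only of the indices with $Q_l^{**}\cap Q_1^{**}=\emptyset$; then every $y\in Q_1^{**}$ lies outside $\bigcup_{k\in\Lambda}Q_k^*$, and although the numerator becomes $\min_{k\in\Lambda}\ell(Q_k)^{n+N+1}$ rather than $\ell(Q_1)^{n+N+1}$, it still converts into the product of maximal functions since $\min_{k\in\Lambda}\ell(Q_k)\le\ell(Q_k)$ for each $k\in\Lambda$. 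The paper does something slightly different and strictly stronger here: it redoes the Taylor expansion in $y_1$ and uses $|y-\xi_1|+|y-y_k|\ge|\xi_1-y_k|\gtrsim|c_1-c_k|$, which puts $|c_1-c_k|$ (not $|y-c_k|$) in the denominator and hence yields a \emph{global} $L^\infty$ bound \eqref{eq.3A8}, valid for every $y\in\mathbb R^n$, not just on $Q_1^{**}$.

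That global bound is exactly what your argument for \eqref{eq.3A3} is missing, and this is the substantive gap. When some $Q_l^{**}$ is disjoint from $Q_1^{**}$, the right-hand side of \eqref{eq.3A3} carries the small factor $\inf_{z\in Q_1^*}M\chi_{Q_l}(z)^{(n+N+1)/(mn)}\ll1$. In your splitting of the argument of $M$, the far part must be estimated at points $y$ that may lie inside $Q_l^*$ for the far indices $l$, where Lemma~\ref{lm.3A1} with $l\in\Lambda$ is inapplicable; with $\Lambda=\{1\}$ the dyadic-annulus sum only gives $O(1)$ and does not produce the required small factor. The paper's route --- bound $\|\mathcal T_\sigma(\vec a)\|_{L^\infty}$ globally by \eqref{eq.3A8} in the disjoint case, then use $M\circ\mathcal T_\sigma(\vec a)\le\|\mathcal T_\sigma(\vec a)\|_{L^\infty}$ --- makes \eqref{eq.3A3} immediate, and you should adopt it. A smaller point: in the near regime of \eqref{eq.3A2}, using $L^{r_1}\times\cdots\times L^{r_m}\to L^r$ with all $r_l$ finite gives $\prod_l|Q_l|^{1/r_l}\ge|Q_1|^{1/r}$, i.e.\ the wrong direction; one must place the single $L^r$ slot on the smallest cube and take $L^\infty$ in the remaining slots, as the paper does. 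Your treatment of \eqref{eq.3D3} and \eqref{eq.3D4} is correct modulo the same care about which indices may be put into $\Lambda$.
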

\begin{proof}
To check \eqref{eq.3A2}, it is enough to consider $1<r<\infty$ and two following cases. First, if $Q_1^{**}\cap Q_k^{**}\ne \emptyset$ for all $2\le k\le m$, then, by the assumption $\ell(Q_1) = \min\{\ell(Q_k) : 1\le k\le m\}$, $Q_1^{**}\subset 3Q_k^{**}$ for all $1\le k\le m$. This implies
\[
\inf_{z\in Q_1^{*}}M\chi_{3Q_k^{**}}(z)\ge 1,
\]
for all $1\le k\le m.$
Now the boundedness of $\mathcal{T}_\sigma$ from $L^r\times L^\infty\times\cdots\times L^\infty$ to $L^r$ yields
\begin{align}
\notag
\|\mathcal{T}_\sigma(a_1,\ldots,a_m)\chi_{Q_1^{**}}\|_{L^r}
\le&
\|\mathcal{T}_\sigma(a_1,\ldots,a_m)\|_{L^r}\\
\notag
\lesssim&
\|a_1\|_{L^r}\|a_2\|_{L^\infty}\cdots \|a_m\|_{L^\infty}\\
\label{eq.3A4}
\lesssim&
|Q_1|^{\frac{1}{r}}
\prod_{k=1}^m\inf_{z\in Q_1^{*}}M\chi_{3Q_k^{**}}(z)^\frac{n+N+1}{mn}.
\end{align}

Second, if $Q_1^{**}\cap Q_k^{**}=\emptyset$ for some $k$, then the set
\[
\Lambda = \{2\le k\le m : Q_1^{**}\cap Q_k^{**}=\emptyset\}
\]
is a non-empty subset of $\{1,\ldots, m\}$. Fix arbitrarily $y\in \mathbb{R}^n$.  By the cancellation of $a_1$, 
rewrite
\[
\mathcal{T}_\sigma(a_1,\ldots,a_m)(y)=
 \int_{\mathbb{R}^{mn}}K^1(y,y_1,y_2,\ldots,y_m)
 a_1(y_1)\cdots a_m(y_m)d\vec{y},
\]
where $K^1(y,y_1,\ldots,y_m)$ is defined in \eqref{eq.3A6}.
For $y_1\in Q_1$ we estimate
\begin{align*}
\big|K^1(y,y_1,\ldots,y_m)\big|\le&
C\ell(Q_1)^{N+1}\Big(
|y-\xi_1|+\sum_{j=2}^m|y-y_j|
\Big)^{-mn-N-1},
\end{align*}
for some $\xi_1\in Q_1$ and for all $y_l\in Q_l$.

Since $Q_1^{**}\cap Q_k^{**}=\emptyset$ for all $k\in \Lambda$, $|y-\xi_1|+|y-y_k|\ge |\xi_1-y_k| \ge C|c_1-c_k|$ for all $y_k\in Q_k$ and  $k\in \Lambda$.
Therefore
\[
\big|K^1(y,y_1,\ldots,y_m)\big|\lesssim
\ell(Q_1)^{N+1}\Big(
\sum_{k\in \Lambda}|c_1-c_k|
+\sum_{j=2}^{m}|y-y_j|
\Big)^{-mn-N-1},
\]
for all $y_1\in Q_1^*$ and $y_k\in Q_k$ for $k\in \Lambda$. Insert the above inequality into \eqref{eq.3A5} to obtain
\[
|\mathcal{T}_\sigma(a_1,\ldots,a_m)(y)|
\lesssim
\dfrac{\ell(Q_1)^{n+N+1}}
{\big(\sum_{k\in \Lambda}|c_1-c_k|\big)^{n+N+1}}
\lesssim
\dfrac{\ell(Q_1)^{n+N+1}}
{\sum_{k\in \Lambda}\big[\ell(Q_1)+|c_1-c_k|+\ell(Q_k)\big]^{n+N+1}}.
\]
Noting that 
$Q_1^{**}\subset 3Q_l^{**}$
for $l\notin \Lambda,$
the last inequality gives  
\begin{equation}
\label{eq.3A8}
\|\mathcal{T}_\sigma(a_1,\ldots,a_m)\|_{L^\infty}
\lesssim
\prod_{k=1}^m\inf_{z\in Q_1^{*}}M\chi_{3Q_k^{**}}(z)^\frac{n+N+1}{mn},
\end{equation}
which yields
\begin{equation}
\label{eq.3A9}
\|\mathcal{T}_\sigma(a_1,\ldots,a_m)\chi_{Q_1^{**}}\|_{L^r}
\lesssim
|Q_1|^{\frac{1}{r}}
\prod_{k=1}^m\inf_{z\in Q_1^{*}}M\chi_{3Q_k^{**}}(z)^\frac{n+N+1}{mn}.
\end{equation}
Combining \eqref{eq.3A4} and \eqref{eq.3A9} and noting that $M\chi_{3Q} \lesssim M\chi_Q$, we obtain \eqref{eq.3A2}.

Similarly, we can prove \eqref{eq.3A3}--\eqref{eq.3D3}. 
For example, 
to show \eqref{eq.3A3}, we again consider the case where 
$Q_1^{**}\cap Q_l^{**}\neq \emptyset$ holds for all $l$ and the case where this fails. 
In the first case, using the boundedness of $M$ on $L^r$, we arrive at the same situation as above. 
In the second case, 
we use the boundedness of $M$ on $L^\infty$ to see 
\[
\|M\circ \mathcal{T}_\sigma(a_1,\dots,a_m)\chi_{Q_1^{**}}\|_{L^r}
\lesssim 
|Q_1|^\frac{1}{r}
\|
\mathcal{T}_\sigma(a_1,\dots,a_m)\|_{L^\infty}.
\] 
Notice that the right-hand side is already treated in \eqref{eq.3A8}.
\end{proof}

Lemma \ref{lem:160726-51} will be used to study the behavior of the operator $M_\phi\circ \mathcal{T}_\sigma$ inside $Q_1^{**}$. For the region outside of $Q_1^{**}$, we need the following estimates.
\begin{lemma}
\label{lm.3A10}
Let $a_k$ be $(p_k,\infty)$-atoms supported in $Q_k$ for all $1\le k\le m$.
If $p_k=\infty$ then $Q_k=\mathbb{R}^n$. Suppose that $Q_1$ is the cube for which $\ell(Q_1) =  \min\{\ell(Q_k) : 1\le k\le m\}$.
Fix $0<t<\infty$.
\begin{enumerate}
\item
If $x \notin Q_1^{**}$ and $c_1 \notin B(x,100n^2t)$,
then
\begin{equation}\label{eq.3A10}
\frac{1}{t^n}
\int_{B(x,t)}|{\mathcal T}_\sigma(a_1,\ldots,a_m)(y)|\,dy
\lesssim \prod_{l=1}^m 
M\chi_{Q_l}(x)^{\frac{n+N+1}{m n}}.
\end{equation}
\item
If $x \notin Q_1^{**}$ and $c_1 \in B(x,100n^2t)$, then
\begin{equation}\label{eq.3A11}
\frac{\ell(Q_1)^{s+1}}{t^{n+s+1}}
\int_{Q_1^*}|{\mathcal T}_\sigma(a_1,\ldots,a_m)(y)|\,dy
\lesssim
M\chi_{Q_1}(x)^{\frac{n+s+1}{n}}
\prod_{l=1}^m 
\inf_{z \in Q_1^*}
M\chi_{Q_l}(z)^{\frac{n+N+1}{m n}},
\end{equation}
and
\begin{equation}\label{eq.3A12}
\frac{1}{t^{n+s+1}}
\int_{(Q_1^*)^c}
|y-c_1|^{s+1}|{\mathcal T}_\sigma(a_1,\ldots,a_m)(y)|\,dy
\lesssim 
M\chi_{Q_1}(x)^{\frac{n+s+1}{n}}
\prod_{l=1}^m 
\inf_{z \in Q_1^*}
M\chi_{Q_l}(z)^{\frac{N-s}{m n}}.
\end{equation}
\item
For all $x\notin Q_1^{**}$, we have
\begin{align}
\label{eq.3B12}
M_\phi\circ \mathcal{T}_\sigma(a_1,\ldots,a_m)(x)\lesssim
 \prod_{l=1}^m 
M\chi_{Q_l}(x)^{\frac{n+N+1}{m n}}
+
M\chi_{Q_1}(x)^{\frac{n+s+1}{n}}
\prod_{l=1}^m 
\inf_{z \in Q_1^*}
\Big(
M\chi_{Q_l}(z)^{\frac{N-s}{m n}}
\Big).
\end{align}
\end{enumerate}
\end{lemma}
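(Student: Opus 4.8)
\medskip
\noindent\textbf{Proof strategy.}
The plan is to regard \eqref{eq.3B12} as the target and to deduce it from \eqref{eq.3A10}, \eqref{eq.3A11} and \eqref{eq.3A12}, which cover the two complementary ranges of the dilation parameter: the ``tail'' regime $c_1\notin B(x,100n^2t)$ and the ``local'' regime $c_1\in B(x,100n^2t)$. The decisive input for the local regime is the cancellation \eqref{eq.TmCan}. First I would note that the product of $\mathcal{T}_\sigma(a_1,\ldots,a_m)$ with any polynomial of degree $\le s$ is integrable (Lemma~\ref{lm.3A1} controls the tail since $N\gg s$, and $\mathcal{T}_\sigma(a_1,\ldots,a_m)$ is locally integrable because $\mathcal{T}_\sigma$ maps products of Lebesgue spaces into a Lebesgue space), so \eqref{eq.TmCan} lets me subtract from $\phi_t(x-\cdot)$ its degree-$s$ Taylor polynomial $P$ about $c_1$:
\[
(\phi_t*\mathcal{T}_\sigma(a_1,\ldots,a_m))(x)=\int_{\R^n}\big[\phi_t(x-y)-P(y)\big]\,\mathcal{T}_\sigma(a_1,\ldots,a_m)(y)\,dy .
\]
Since $|\phi_t(x-y)-P(y)|\lesssim t^{-n-s-1}|y-c_1|^{s+1}$ for all $y$ (and $|y-c_1|^{s+1}\lesssim\ell(Q_1)^{s+1}$ on $Q_1^*$), splitting this integral over $Q_1^*$ and $(Q_1^*)^c$ produces exactly the left-hand sides of \eqref{eq.3A11} and \eqref{eq.3A12} — when $c_1\in B(x,100n^2t)$. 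When instead $c_1\notin B(x,100n^2t)$, the assumption $x\notin Q_1^{**}$ forces $\mathrm{dist}(x,Q_1^*)>t$ and $|x-c_1|>t$, so $P\equiv0$ and $\phi_t(x-\cdot)$ vanishes on $Q_1^*$; only the tail term remains, dominated by $t^{-n}\int_{B(x,t)}|\mathcal{T}_\sigma(a_1,\ldots,a_m)|$, the left-hand side of \eqref{eq.3A10}. Taking $\sup_{t>0}$ and adding the three bounds — collapsing $M\chi_{Q_l}(z)^{(n+N+1)/(mn)}\lesssim M\chi_{Q_l}(z)^{(N-s)/(mn)}$ via $M\chi_{Q_l}\lesssim1$ — gives \eqref{eq.3B12}. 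So it remains to prove the three averaged estimates.

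\smallskip
\noindent\emph{The tail estimate \eqref{eq.3A10}.}
For $y\in B(x,t)$ one has $|y-c_1|\approx|x-c_1|$ and (as above) $B(x,t)\cap Q_1^*=\emptyset$. With $\Lambda:=\{k:\ B(x,t)\cap Q_k^*=\emptyset\}\ni1$, Lemma~\ref{lm.3A1} gives
\[
|\mathcal{T}_\sigma(a_1,\ldots,a_m)(y)|\le\frac{\ell(Q_1)^{n+N+1}}{\big(\sum_{k\in\Lambda}|y-c_k|\big)^{n+N+1}}\lesssim\frac{\ell(Q_1)^{n+N+1}}{\big(\sum_{k\in\Lambda}|x-c_k|\big)^{n+N+1}} ,
\]
and averaging over $B(x,t)$ cancels $t^{-n}$. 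It remains to bound the right-hand side by $\prod_{l=1}^m M\chi_{Q_l}(x)^{(n+N+1)/(mn)}$; here I would split the indices: for $l\notin\Lambda$ one has $B(x,t)\cap Q_l^*\ne\emptyset$ and $\ell(Q_1)\le\ell(Q_l)$, which forces $M\chi_{Q_l}(x)\gtrsim M\chi_{Q_1}(x)$ (so such factors cost nothing), while for $l\in\Lambda$ one has $x\notin Q_l^*$, hence $|x-c_l|\gtrsim\ell(Q_l)\ge\ell(Q_1)$, and then the elementary inequality $\prod_{l\in\Lambda}|x-c_l|\cdot|x-c_1|^{|\Lambda^{\mathrm c}|}\le\big(\sum_{k\in\Lambda}|x-c_k|\big)^m$ together with $M\chi_{Q_1}(x)^{1/n}\approx\ell(Q_1)/(\ell(Q_1)+|x-c_1|)$ finishes it.

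\smallskip
\noindent\emph{The local estimates \eqref{eq.3A11}, \eqref{eq.3A12}.}
For \eqref{eq.3A11}, since $Q_1^*\subset Q_1^{**}$, the case $r=1$ of \eqref{eq.3A2} gives $\int_{Q_1^*}|\mathcal{T}_\sigma(a_1,\ldots,a_m)|\lesssim|Q_1|\prod_l\inf_{z\in Q_1^*}M\chi_{Q_l}(z)^{(n+N+1)/(mn)}$; multiplying by $\ell(Q_1)^{s+1}t^{-n-s-1}$ and using that $x\notin Q_1^{**}$ and $c_1\in B(x,100n^2t)$ together force $\ell(Q_1)+|x-c_1|\lesssim t$, whence $\ell(Q_1)^{n+s+1}t^{-n-s-1}\lesssim M\chi_{Q_1}(x)^{(n+s+1)/n}$, proves \eqref{eq.3A11}. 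For \eqref{eq.3A12}, I would decompose $(Q_1^*)^c$ dyadically into shells around $Q_1$ where $|y-c_1|\approx2^j\ell(Q_1)$, and bound $\mathcal{T}_\sigma(a_1,\ldots,a_m)$ there by Lemma~\ref{lm.3A1}, choosing $\Lambda$ to also contain the indices $l$ with $Q_1^{**}\cap Q_l^{**}=\emptyset$ and invoking the triangle inequality exactly as in the proof of Lemma~\ref{lem:160726-51} to gain decay in $|c_1-c_l|$. Because $N\gg s$, after inserting the weight $|y-c_1|^{s+1}$ (which costs $s+1$ powers) enough powers remain both to sum over $j$ and to produce the factors $\prod_l\inf_{z\in Q_1^*}M\chi_{Q_l}(z)^{(N-s)/(mn)}$; the net outcome is $\int_{(Q_1^*)^c}|y-c_1|^{s+1}|\mathcal{T}_\sigma(a_1,\ldots,a_m)(y)|\,dy\lesssim\ell(Q_1)^{n+s+1}\prod_l\inf_{z\in Q_1^*}M\chi_{Q_l}(z)^{(N-s)/(mn)}$, and dividing by $t^{n+s+1}$ and again using $\ell(Q_1)+|x-c_1|\lesssim t$ yields \eqref{eq.3A12}.

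\smallskip
\noindent\emph{Main obstacle.}
I expect the hard part to be the bookkeeping in \eqref{eq.3A10} and \eqref{eq.3A12}: converting the \emph{joint} kernel decay $\ell(Q_1)^{n+N+1}\big(\sum_{k\in\Lambda}|y-c_k|\big)^{-n-N-1}$ delivered by Lemma~\ref{lm.3A1} into the \emph{product} bound $\prod_{l=1}^m M\chi_{Q_l}(\cdot)^{\cdots}$ demanded by the conclusion. This requires the systematic dichotomy ``$Q_l$ close to $Q_1$, where $M\chi_{Q_l}$ already dominates a power of $M\chi_{Q_1}$, versus $Q_l$ far from $Q_1$, where Lemma~\ref{lm.3A1} yields genuine decay in $|c_1-c_l|$'', together with a chain of elementary but delicate inequalities (arithmetic--geometric mean; $x\notin Q_l^*\Rightarrow|x-c_l|\gtrsim\ell(Q_l)$; $Q_1^{**}\cap Q_l^{**}=\emptyset\Rightarrow\ell(Q_l)+|c_1-c_l|\approx|c_1-c_l|$). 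Everything else is routine once the single quantitative fact $\ell(Q_1)+|x-c_1|\lesssim t$, valid precisely in the regime $c_1\in B(x,100n^2t)$, is in hand.
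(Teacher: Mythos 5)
Your proposal is correct and follows essentially the same route as the paper: the same dichotomy in $t$ driven by whether $c_1\in B(x,100n^2t)$, the same use of the cancellation \eqref{eq.TmCan} via subtraction of the degree-$s$ Taylor polynomial of $\phi_t(x-\cdot)$ at $c_1$ with the remainder bound $t^{-n-s-1}|y-c_1|^{s+1}$, the same split over $Q_1^*$ and $(Q_1^*)^c$, the same use of \eqref{eq.3A2} plus $\ell(Q_1)/t\lesssim M\chi_{Q_1}(x)^{1/n}$ for \eqref{eq.3A11}, and the same conversion of the joint kernel decay into a product of maximal functions via the near/far dichotomy and AM--GM. The only cosmetic differences are that you invoke Lemma \ref{lm.3A1} directly (with a slightly different choice of $\Lambda$) where the paper rederives the kernel bound, and you organize \eqref{eq.3A12} by dyadic shells where the paper integrates the kernel estimate in one stroke; neither changes the substance.
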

\begin{proof}
Fix $x\notin Q_1^{**}$ and denote $\Lambda = \{1\le k\le m : x\notin Q_k^{**}\}$.

\textit{(1)} Suppose $c_1 \notin B(x,100n^2t)$. For $y\in B(x,t)$, from \eqref{eq.3A5} we rewrite
\[
\mathcal{T}_\sigma(a_1,\ldots,a_m)(y) = \int_{\mathbb{R}^{mn}}K^1(y,y_1,\ldots,y_m)
a_1(y_1)\cdots a_m(y_m)d\vec{y},
\]
where $K^1$ is defined in \eqref{eq.3A6}. 
Note that for $y\in B(x,t)$, $y_1\in Q_1$ and $c_1\notin B(x,100n^2t)$, we have
\[
t\lesssim |x-c_1|\lesssim |y-y_1|.
\]
Since $x\notin Q_k^{**}$ for all $k\in \Lambda$,
\[
|x-c_k|\lesssim |x-y_k|\lesssim t+ |y-y_k|\lesssim |y-y_1|+ |y-y_k|
\]
for all $k \in \Lambda$ and $y_k \in Q_k$.
Consequently,
\begin{align}
\label{eq.3A13}
\left|K^{1}(y,y_1,\ldots,y_m)\prod_{l=1}^m a_l(y_l)\right|
\lesssim
\frac{\ell(Q_1)^{N+1}\chi_{Q_1}(y_1)}{\displaystyle
\left(\sum_{l=2}^m |y-y_l|+
\sum_{k \in \Lambda} |x-c_k|\right)^{m n+N+1}}.
\end{align}
Integrating
(\ref{eq.3A13})
over $({\mathbb R}^n)^m$, 
and using that $\ell(Q_1)\leq \ell(Q_l)$ for all $2\le l\le m$, we obtain that 
\begin{align*}
|{\mathcal T}_\sigma(a_1,\ldots,a_m)(y)|
&\lesssim
\frac{\ell(Q_1)^{n+N+1}}{\displaystyle
\left(\sum_{l \in \Lambda} |x-c_l|\right)^{n+N+1}}
\\
&\lesssim
\prod_{l\in \Lambda}
\frac{\ell(Q_l)^{\frac{n+N+1}{|\Lambda|}}}{\displaystyle
|x-c_l|^{\frac{n+N+1}{|\Lambda|}}}
\chi_{(Q_l^{**})^c}(x)
\cdot
\prod_{k\notin \Lambda}\chi_{Q_k^{**}}(x)
\lesssim
\prod_{l=1}^m M\chi_{Q_l}(x)^{\frac{n+N+1}{m n}}.
\end{align*}
This pointwise estimate proves \eqref{eq.3A10}.

\textit{(2)} Assume $c_1 \in B(x,100n^2t)$. Fix $1<r<\infty$ and estimate the left-hand side of \eqref{eq.3A11} by
\[
\frac{\ell(Q_1)^{s+1}}{t^{n+s+1}}|Q_1|^{1-\frac1{r}}
\|\mathcal{T}_\sigma(a_1,\ldots,a_m)\chi_{Q_1^{**}}\|_{L^r}
\lesssim
\frac{\ell(Q_1)^{n+s+1}}{t^{n+s+1}}
\prod_{l=1}^m 
\inf_{z \in Q_1^*}
M\chi_{Q_l}(z)^{\frac{n+N+1}{m n}},
\]
where we used \eqref{eq.3A2} in the above inequality.
Since $x\notin Q_1^{**}$ and $c_1\in B(x,100n^2t)$, $Q_1^*\subset B(x,1000n^2t)$ and hence, $\ell(Q_1)/t\lesssim M\chi_{Q_1}(x)$. This combined with the last inequality implies \eqref{eq.3A11}.

To verify \eqref{eq.3A12}, we recall
the expression of $\mathcal{T}_\sigma(a_1,\ldots,a_m)(y)$ in \eqref{eq.3A5} 
and the pointwise estimate for $K^1(y,y_1,\ldots,y_m)$ defined in \eqref{eq.3A6}.
Denote $J=\{2\le k\le m : Q_1^{**}\cap Q_k^{**}=\emptyset\}$.
Using the facts that 
$|y-y_1|\sim |y-c_1|\geq \ell(Q_1)$ for $y\notin Q_1^*$, $y_1\in Q_1$ 
and
$|y-y_1|+|y-y_l|\geq |y_1-y_l|\gtrsim |z-c_l|$ 
for all $z\in Q_1^*$ and $l\in J$, we now estimate
\begin{align*}
|{\mathcal T}_\sigma(a_1,\ldots,a_m)(y)|
\lesssim
\int_{({\mathbb R}^n)^m}
\frac{\ell(Q_1)^{N+1}\chi_{Q_1}(y_1)\,d\vec{y}}{
\left(
\ell(Q_1)+|y-c_1|+
\displaystyle
\sum_{l\in J}
|z-c_l|
+\sum_{l=2}^m|y-y_l|
\right)^{m n+N+1}}
\end{align*}
for all $y \in (Q_1^*)^c$ and $z\in Q_1^*$.
Thus, 
\begin{align*}
&\frac{1}{t^{n+s+1}}
\int_{(Q_1^*)^c}|y-c_1|^{s+1}|{\mathcal T}_\sigma(a_1,\ldots,a_m)(y)|\,dy
\\
&\lesssim
\frac{1}{t^{n+s+1}}
\int_{{\mathbb R}^n \times ({\mathbb R}^n)^m}
\frac{|y-c_1|^{s+1}\ell(Q_1)^{N+1}\chi_{Q_1}(y_1)\,d\vec{y}dy}
{\displaystyle\bigg(\ell(Q_1)+|y-c_1|
+\sum_{l\in J}|c_1-c_l|
+\sum_{l=2}^m |y-y_l|\bigg)^{m n+N+1}}
\\
&\lesssim 
\left(
\frac{\ell(Q_1)}{t}
\right)^{n+s+1}
\prod_{l\in J}
\left(
\frac{\ell(Q_l)}{|z-c_l|}
\right)^\frac{N-s}{m}.
\end{align*}
Note that
$
1\lesssim\inf_{z\in Q_1^*}M\chi_{2Q_l^{**}}(z)
$
if $Q_1^{**} \cap Q_l^{**} \ne \emptyset$;
otherwise,
$\ell(Q_l)/|z-c_l|\lesssim M\chi_{2Q_l^{**}}(z)^\frac{1}{n}$
 for all  $z\in Q_1^*$.
Consequently,
\begin{align*}
\frac{1}{t^{n+s+1}}
\int_{(Q_1^*)^c}|y-c_1|^{s+1}|{\mathcal T}_\sigma(a_1,\ldots,a_m)(y)|\,dy
&\lesssim
M\chi_{Q_1}(x)^{\frac{n+s+1}{n}}
\prod_{l=1}^m
\inf_{z \in Q_1^*} M\chi_{Q_l}(z)^{\frac{N-s}{mn}},
\end{align*}
which deduces \eqref{eq.3A12}.

\textit{(3)} It remains to prove \eqref{eq.3B12}. Fix $x\notin Q_1^{**}$.
To calculate 
$M_{\phi}\circ \mathcal{T}_{\sigma}(a_1,\ldots,a_m)(x)$, 
we need to estimate
\[
\Big|
\int_{{\mathbb R}^n} \phi_t(x-y){\mathcal T}_\sigma(a_1,\ldots,a_m)(y)\,dy
\Big|
\]
for each $t\in (0,\infty)$. Let consider two cases: $c_1 \notin B(x,100n^2t)$ and
 $c_1 \in B(x,100n^2t)$.

In the first case,
since $\phi$ is supported in the unit ball,
\begin{equation*}
\Big|
\int_{{\mathbb R}^n} \phi_t(x-y){\mathcal T}_\sigma(a_1,\ldots,a_m)(y)\,dy
\Big|
\lesssim
\frac{1}{t^n}
\int_{B(x,t)}|{\mathcal T}_\sigma(a_1,\ldots,a_m)(y)|\,dy.
\end{equation*}
Since $c_1 \notin B(x,100n^2t)$, \eqref{eq.3A10} implies that
\begin{equation}\label{eq.3A14}
\Big|
\int_{{\mathbb R}^n} \phi_t(x-y){\mathcal T}_\sigma(a_1,\ldots,a_m)(y)\,dy
\Big|
\lesssim
\prod_{l=1}^m
M\chi_{Q_l}(x)^{\frac{n+N+1}{m n}}.
\end{equation}

In the second case,
we will exploit the moment condition
of ${\mathcal T}_\sigma(a_1,\ldots,a_m)$.
Denote
\begin{equation}
\label{eq.3C01}
\delta_1^s(t;x,y) =
\phi_t(x-y) -
\sum_{|\alpha|\le s}\frac{\partial^{\alpha}[\phi_t](x-c_1)}{\alpha!}(c_1-y)^{\alpha}.
\end{equation}
Since $|\delta_1^s(t;x,y)|\lesssim t^{-n-s-1}$ for all $x,y$ and \eqref{eq.TmCan},
\begin{align}
\notag
\Big|
\int_{{\mathbb R}^n} \phi_t(x-y){\mathcal T}_\sigma(a_1,\ldots,a_m)(y)\,dy
\Big|&=
\Big|
\int_{{\mathbb R}^n} \delta^s_1(t;x,y){\mathcal T}_\sigma(a_1,\ldots,a_m)(y)\,dy
\Big|
\\
\notag
\lesssim&
\frac{1}{t^{n+s+1}}
\int_{{\mathbb R}^n}|y-c_1|^{s+1}|{\mathcal T}_\sigma(a_1,\ldots,a_m)(y)|\,dy
\\
\notag
=&
\frac{1}{t^{n+s+1}}
\int_{Q_1^*}|y-c_1|^{s+1}|{\mathcal T}_\sigma(a_1,\ldots,a_m)(y)|\,dy\\
\notag
&+
\frac{1}{t^{n+s+1}}
\int_{(Q_1^*)^c}|y-c_1|^{s+1}|{\mathcal T}_\sigma(a_1,\ldots,a_m)(y)|\,dy\\
&\lesssim
\frac{\ell(Q_1)^{s+1}}{t^{n+s+1}}
\int_{Q_1^*}|{\mathcal T}_\sigma(a_1,\ldots,a_m)(y)|\,dy
\label{eq.3C02}\\
\notag
&+
\frac{1}{t^{n+s+1}}
\int_{(Q_1^*)^c}|y-c_1|^{s+1}|{\mathcal T}_\sigma(a_1,\ldots,a_m)(y)|\,dy.
\end{align}
Invoking \eqref{eq.3A11} and \eqref{eq.3A12}, we obtain
\begin{align}
\notag
&
\Big|
\int_{{\mathbb R}^n} \phi_t(x-y){\mathcal T}_\sigma(a_1,\ldots,a_m)(y)\,dy
\Big|\\
\notag
&\lesssim
M\chi_{Q_1}(x)^{\frac{n+s+1}{n}}
\prod_{l=1}^m 
\inf_{z \in Q_1^*}
\Big[
M\chi_{Q_l}(z)^{\frac{n+N+1}{m n}}
+
M\chi_{Q_l}(z)^{\frac{N-s}{m n}}
\Big]\\
\label{eq.3A15}
&\lesssim
M\chi_{Q_1}(x)^{\frac{n+s+1}{n}}
\prod_{l=1}^m 
\inf_{z \in Q_1^*}
\Big(
M\chi_{Q_l}(z)^{\frac{N-s}{m n}}
\Big).
\end{align}
Combining \eqref{eq.3A14} and \eqref{eq.3A15} yields the required estimate \eqref{eq.3B12}.
The proof of Lemma \ref{lm.3A10} is now completed.
\end{proof}

\subsection{The proof of Proposition \ref{LM.Key-31} for Coifman-Meyer type}
We now turn into the proof of \eqref{eq.PWEST}, i.e., estimate
\begin{equation}
\label{eq.3C3}
A= \Big\|\sum_{k_1,\ldots,k_m=1}^\infty
\Big(\prod_{l=1}^m \lambda_{l,k_l}\Big)
 M_\phi \circ {\mathcal T}_\sigma(a_{1,k_1},\ldots,a_{m,k_m})\Big\|_{L^p} .
\end{equation}
For each $\vec{k}=(k_1,\ldots,k_m)$, we denote by $R_{\vec{k}}$ the cube with smallest length among $Q_{1,k_1},\ldots,Q_{m,k_m}$. Then we have $A\lesssim B+G$, where
\begin{equation}
\label{eq.3C4}
B = \Big\|\sum_{k_1,\ldots,k_m=1}^\infty
\Big(\prod_{l=1}^m \lambda_{l,k_l}\Big)
 M_\phi \circ {\mathcal T}_\sigma(a_{1,k_1},\ldots,a_{m,k_m})\chi_{R_{\vec{k}}^{**}}\Big\|_{L^p}
\end{equation}
and
\begin{equation}
\label{eq.3C5}
G = \Big\|\sum_{k_1,\ldots,k_m=1}^\infty
\Big(\prod_{l=1}^m \lambda_{l,k_l}\Big)
 M_\phi \circ {\mathcal T}_\sigma(a_{1,k_1},\ldots,a_{m,k_m})\chi_{(R_{\vec{k}}^{**})^c}\Big\|_{L^p}.
\end{equation}

To estimate $B$, for some $\max(1,p)<r<\infty$ Lemma \ref{lm.2A00} and \eqref{eq.3A3} imply
\begin{align*}
B\lesssim &
\Big\|\sum_{k_1,\ldots,k_m=1}^\infty
\Big(\prod_{l=1}^m \lambda_{l,k_l}\Big)
\frac{\chi_{R_{\vec{k}}^{**}}}{|\chi_{R_{\vec{k}}^{**}}|^{\frac1r}}
\|
M_\phi \circ {\mathcal T}_\sigma(a_{1,k_1},\ldots,a_{m,k_m})\chi_{R_{\vec{k}}^{**}}
\|_{L^r}
 \Big\|_{L^p}
 \\
 \lesssim&
\Big\|\sum_{k_1,\ldots,k_m=1}^\infty
\Big(\prod_{l=1}^m \lambda_{l,k_l}\Big)
\Big(
\prod_{l=1}^m
\inf_{z\in R_{\vec{k}}^{*}}
M\chi_{Q_{l,k_l}}(z)^{\frac{n+N+1}{mn}}
\Big)
\chi_{R_{\vec{k}}^{**}}
 \Big\|_{L^p}\\
=&
\Big\|\sum_{k_1,\ldots,k_m=1}^\infty
\Big(
\prod_{l=1}^m
\inf_{z\in R_{\vec{k}}^{*}}
\lambda_{l,k_l}
M\chi_{Q_{l,k_l}}(z)^{\frac{n+N+1}{mn}}
\Big)
\chi_{R_{\vec{k}}^{**}}
 \Big\|_{L^p}\\
 \lesssim&
\Big\|\sum_{k_1,\ldots,k_m=1}^\infty
\Big(
\prod_{l=1}^m
\inf_{z\in R_{\vec{k}}^{*}}
\lambda_{l,k_l}
M\chi_{Q_{l,k_l}}(z)^{\frac{n+N+1}{mn}}
\Big)
\chi_{R_{\vec{k}}^{*}}
 \Big\|_{L^p},
\end{align*} 
where we used Lemma \ref{lm.2B00} in the last inequality. Now we can remove the infimum and apply   H\"older's inequality to obtain
\begin{align}
\notag
B\lesssim &
\Big\|\sum_{k_1,\ldots,k_m=1}^\infty
\prod_{l=1}^m
\lambda_{l,k_l}
\Big(M\chi_{Q_{l,k_l}}\Big)^{\frac{n+N+1}{mn}}
 \Big\|_{L^p}
 =
\Big\|
\prod_{l=1}^m
\sum_{k_l=1}^\infty
\lambda_{l,k_l}
\Big(
M\chi_{Q_{l,k_l}}\Big)^{\frac{n+N+1}{mn}}
\Big\|_{L^p}\\
\notag
\le&
\prod_{l=1}^m
\Big\|
\sum_{k_l=1}^\infty
\lambda_{l,k_l}
\Big(
M\chi_{Q_{l,k_l}}\Big)^{\frac{n+N+1}{mn}}
\Big\|_{L^{p_l}}
\lesssim
\prod_{l=1}^m
\Big\|
\sum_{k_l=1}^\infty
\lambda_{l,k_l}
\chi_{Q_{l,k_l}^{**}}
\Big\|_{L^{p_l}}\\
\label{eq.3A16}
\lesssim&
\prod_{l=1}^m
\Big\|
\sum_{k_l=1}^\infty
\lambda_{l,k_l}
\chi_{Q_{l,k_l}}
\Big\|_{L^{p_l}}.
\end{align}
Once again, Lemma \ref{lm.2B00} was used in the last two inequalities.

To deal with $G$, we use \eqref{eq.3B12} and estimate $G \lesssim G_1+G_2$, where
\[
G_1  = 
\Big\|\sum_{k_1,\ldots,k_m=1}^\infty
\Big(\prod_{l=1}^m \lambda_{l,k_l}\Big)
  \prod_{l=1}^m 
\left(M\chi_{Q_{l,k_l}}\right)^{\frac{n+N+1}{m n}}
 \Big\|_{L^p}
\]
and
\[
G_2  = 
\Big\|\sum_{k_1,\ldots,k_m=1}^\infty
\Big(\prod_{l=1}^m \lambda_{l,k_l}\Big)
\Big(
\prod_{l=1}^m 
\inf_{z \in R_{\vec{k}}^*}
M\chi_{Q_{l,k_l}}(z)^{\frac{N-s}{m n}}
\Big)
(M\chi_{R_{\vec{k}}^*})^{\frac{n+s+1}{n}}
 \Big\|_{L^p}.
\]
Repeating the argument in estimating for $B$, noting that $\frac{(n+s+1)p}{n}>1$ and $N\gg s$, we obtain
\begin{equation}\label{eq.3A17}
G \lesssim
G_1+ G_2
\lesssim
\prod_{l=1}^m
\Big\|
\sum_{k_l=1}^\infty
\lambda_{l,k_l}
\chi_{Q_{l,k_l}}
\Big\|_{L^{p_l}}.
\end{equation}
Combining \eqref{eq.3A16} and \eqref{eq.3A17} deduces \eqref{eq.PWEST}. This completes the proof of Proposition \ref{LM.Key-31} for the operator $\mathcal{T}_\sigma$ of type \eqref{eq.CalZygOPT}.

\begin{remark}
The techniques in this paper also work for CZ operators of non-convolution types; this 
recovers the results in \cite{HuMe12}. 
\end{remark}

%=============================================================%
\section{The product type}
%=============================================================%
On this whole section, we denote by $\mathcal{T}_\sigma$ the operator defined in  \eqref{eq.CalZygOPT-2} and prove Proposition \ref{LM.Key-31} for this operator.
Now we need to establish some results  analogous to Lemmas \ref{lem:160726-51} and \ref{lm.3A10}.

\subsection{Fundamental estimates for the product type}
Let $a_k$ be $(p_k,\infty)$-atoms supported in $Q_k$ for all $1\le k\le m$.
Here and below
$M^{(r)}$ denotes the power-maximal operator:
$M^{(r)}f(x)=M(|f|^r)(x)^\frac{1}{r}$.
Suppose $Q_1$ is the cube such that $\ell(Q_1) =  \min\{\ell(Q_k) : 1\le k\le m\}$, then we have the following lemmas.
\begin{lemma}
\label{lm.4A00}
For all $x\in Q_1^{**}$, we have
\begin{align}\label{eq.4A01}
M_\phi\circ \mathcal{T}_\sigma(a_1,\ldots,a_m)(x)\chi_{Q_1^{**}}(x)
\lesssim
\prod_{l=1}^m
M\chi_{Q_l}(x)^\frac{n+N+1}{mn}
\left(
1+M^{(m)} \circ T_{\sigma_l}(a_l)(x)
\right).
\end{align}
\end{lemma}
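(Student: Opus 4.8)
The plan is to mimic the structure of the Coifman--Meyer argument (Lemmas~\ref{lem:160726-51} and~\ref{lm.3A10}), splitting the estimation of $M_\phi\circ\mathcal{T}_\sigma(a_1,\ldots,a_m)$ on $Q_1^{**}$ into the case where all the cubes $Q_l^{**}$ meet $Q_1^{**}$ and the case where at least one of them is separated from $Q_1^{**}$. The key new feature here is the product structure $\mathcal{T}_\sigma(a_1,\ldots,a_m)=\sum_{\rho=1}^{T}\prod_{j=1}^{m}T_{\sigma_j^\rho}(a_j)$, so that $\mathcal{T}_\sigma$ no longer has a single $mn$-dimensional kernel; instead each factor is controlled by its own linear Calder\'on--Zygmund kernel. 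Since $T$ is finite, it suffices to estimate one summand $\prod_{j=1}^{m}T_{\sigma_j^\rho}(a_j)$ and then sum in $\rho$, so I will drop $\rho$ from the notation.

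\textbf{Key steps.} First I would record the basic linear estimates for $T_{\sigma_l}(a_l)$: inside $Q_l^{**}$ one uses $L^r$-boundedness of $T_{\sigma_l}$ to bound averages of $T_{\sigma_l}(a_l)$ by $|Q_l|^{1/r}$ times a maximal-function factor, while outside $Q_l^{*}$ one exploits the cancellation of the atom $a_l$ together with the kernel smoothness to get the pointwise bound $|T_{\sigma_l}(a_l)(y)|\lesssim \ell(Q_l)^{n+N+1}/|y-c_l|^{n+N+1}\lesssim M\chi_{Q_l}(y)^{(n+N+1)/n}$ — this is exactly the $m=1$ case of Lemma~\ref{lm.3A1}. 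Second, fix $x\in Q_1^{**}$. Since $\ell(Q_1)$ is minimal, for any $l$ with $Q_1^{**}\cap Q_l^{**}\ne\emptyset$ we have $Q_1^{**}\subset 3Q_l^{**}$, hence $\inf_{z\in Q_1^{*}}M\chi_{Q_l}(z)\gtrsim 1$; for the other $l$'s, the separation gives the decaying kernel estimate. To handle $M_\phi$, I would split each $\phi_t\ast[\prod_j T_{\sigma_j}(a_j)]$ at the radius $t$: on $B(x,t)$ inside a small multiple of $Q_1^{**}$ the contribution of the factors $T_{\sigma_l}(a_l)$ with $l$ in the ``bad'' (separated) set is pointwise dominated by the decaying maximal factors $\prod M\chi_{Q_l}(x)^{(n+N+1)/mn}$, while for each ``good'' factor one keeps $T_{\sigma_l}(a_l)$ itself and estimates $\frac1{t^n}\int_{B(x,t)}\prod_{l\text{ good}}|T_{\sigma_l}(a_l)(y)|\,dy$ by H\"older's inequality in the exponents $m$ (one factor) against the rest, producing a term controlled by $\prod_{l\text{ good}} M^{(m)}\circ T_{\sigma_l}(a_l)(x)$ times the decaying factors for the bad indices. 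Packaging the worst case — where every factor could be ``good'' — gives the product $\prod_{l=1}^{m}(1+M^{(m)}\circ T_{\sigma_l}(a_l)(x))$, and the kernel decay contributes the remaining $\prod_l M\chi_{Q_l}(x)^{(n+N+1)/mn}$. Third, I would observe that the moment condition \eqref{eq.TmCan} is not needed for the region $Q_1^{**}$ (it is only used outside, as in Lemma~\ref{lm.3A10}), so the proof here only uses boundedness of $\mathcal{T}_\sigma$ on products of $L^r$-spaces and the pointwise kernel bounds.

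\textbf{Main obstacle.} The delicate point is the H\"older bookkeeping for $M_\phi$ when several factors $T_{\sigma_l}(a_l)$ are simultaneously ``good'': one must split the $L^1$-average $\frac1{t^n}\int_{B(x,t)}\prod_{l\in \text{good}}|T_{\sigma_l}(a_l)|$ so that each surviving factor is measured in $L^m$ (yielding $M^{(m)}\circ T_{\sigma_l}(a_l)(x)$) while the total number of good factors is at most $m$, so $\sum 1/m\le 1$ and the remaining exponent is absorbed into the $L^\infty$ bounds of the bad, decaying factors. Keeping track of which factors are compared to $\chi_{Q_l}$ via $M\chi_{Q_l}(x)^{(n+N+1)/mn}$ (from kernel decay / support separation) versus via $M^{(m)}\circ T_{\sigma_l}(a_l)(x)$ (from the $L^r$-theory), and checking that in every case the prefactor $\prod_l M\chi_{Q_l}(x)^{(n+N+1)/mn}$ can be extracted, is the technical heart of the argument; the rest is a routine adaptation of the Coifman--Meyer computation.
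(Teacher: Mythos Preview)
Your outline handles the regime $t\lesssim \ell(Q_1)$ correctly and in essentially the same way as the paper: when $B(x,t)$ stays inside a fixed dilate of $Q_1^{**}$, your good/bad split of the factors together with the H\"older trick (putting each ``good'' factor in $L^m$ and each ``bad'' factor in $L^\infty$ via its one-dimensional Lemma~\ref{lm.3A1} decay) reproduces \eqref{eq.4A03}.

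The gap is the regime $t>\ell(Q_1)$. Here $B(x,t)$ is no longer confined near $Q_1^{**}$: for a ``bad'' index $l$ (with $Q_1^{**}\cap Q_l^{**}=\emptyset$) the ball $B(x,t)$ can reach $Q_l^*$, so your pointwise bound $|T_{\sigma_l}(a_l)(y)|\lesssim(\ell(Q_l)/|y-c_l|)^{n+N+1}$ is unavailable on that part of $B(x,t)$. Replacing it by the pure H\"older estimate $\big(\tfrac{1}{t^n}\int_{B(x,t)}|T_{\sigma_l}(a_l)|^m\big)^{1/m}\le M^{(m)}\circ T_{\sigma_l}(a_l)(x)$ does not rescue the argument either: for $t\approx |x-c_l|$ this average is only of order $M\chi_{Q_l}(x)^{1/m}$, which is \emph{larger} than the required factor $M\chi_{Q_l}(x)^{(n+N+1)/(mn)}$, and it is not dominated by $M\chi_{Q_l}(x)^{(n+N+1)/(mn)}\big(1+M^{(m)}\circ T_{\sigma_l}(a_l)(x)\big)$ in general. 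So neither of your two mechanisms produces the needed decay when $t$ is large.

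The missing idea, which the paper supplies, is to treat $t>\ell(Q_1)$ by the crude bound $\tfrac{1}{t^n}\int_{B(x,t)}|\mathcal{T}_\sigma|\lesssim \tfrac{1}{|Q_1^*|}\int_{\mathbb{R}^n}|\mathcal{T}_\sigma|$, then split $\mathbb{R}^n=Q_1^*\cup(Q_1^*)^c$. On $Q_1^*$ your H\"older argument applies directly. On $(Q_1^*)^c$ one singles out the \emph{first} factor $T_{\sigma_1}(a_1)$ (the one with the smallest cube), uses its decay $|T_{\sigma_1}(a_1)(y)|\lesssim(\ell(Q_1)/|y-c_1|)^{n+N+1}$, and performs a dyadic decomposition into annuli $2^{j+1}Q_1^*\setminus 2^jQ_1^*$; on each annulus the remaining factors are again handled by H\"older together with \eqref{eq.3D4}, and the resulting geometric factors $2^{-j(\frac{n+N+1}{m}-n)}$ sum. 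Without this step the proof of \eqref{eq.4A01} is incomplete.
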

\begin{proof}
Fix $x\in Q_1^{**}$. We need to estimate
\[
\Big|
\int_{{\mathbb R}^n} \phi_t(x-y){\mathcal T}_\sigma(a_1,\ldots,a_m)(y)\,dy
\lesssim
\frac{1}{t^n}
\int_{B(x,t)}
|{\mathcal T}_\sigma(a_1,\ldots,a_m)(y)|\,dy
\]
for each $t\in (0,\infty)$.
The proof of \eqref{eq.4A01} is mainly based on the boundedness of $\mathcal{T}_\sigma$ and the smoothness condition of each Calder\'on-Zygmund kernel in \eqref{eq.CalZygOPT-2}. Instead of considering the whole sum in \eqref{eq.CalZygOPT-2}, 
for notational simplicity, it is convenient to consider one term, i.e.,
\begin{equation}\label{eq.4A02}
 {\mathcal T}_\sigma(f_1,\ldots,f_m) =
T_{\sigma_1}(f_1) \cdots
T_{\sigma_m}(f_m)   
\end{equation}
except when cancellation is used, when the entire sum is needed.
We consider two cases: $t\le \ell(Q_1)$ and $t>\ell(Q_1)$.

\textbf{Case 1: $t\le \ell(Q_1)$}. By   H\"older inequality and \eqref{eq.3D4}, we have
%detail%%%%%%%%%%%%%%%%%%%%%%%%%%%%%%%%%%%%%%%%%%
\if0
\footnote{
The detail is as follows.
We need to decompose two cases; 
$Q_1^{**}\cap Q_l^{**}=\emptyset$ or not.
In fact, 
\begin{align*}
&\prod_{l=1}^m
\Big(
\frac{1}{t^n}
\int_{B(x,t)}
|T_{\sigma_l}a_l(y)|^mdy
\Big)^{\frac1m}\\
=&
\prod_{l:Q_1^{**}\cap Q_l^{**}\neq \emptyset}
\Big(
\frac{1}{t^n}
\int_{B(x,t)}
|T_{\sigma_l}a_l(y)|^mdy
\Big)^{\frac1m}
\prod_{l:Q_1^{**}\cap Q_l^{**}=\emptyset}
\Big(
\frac{1}{t^n}
\int_{B(x,t)}
|T_{\sigma_l}a_l(y)|^mdy
\Big)^{\frac1m}. 
\end{align*}
For the first term, 
we employ the trivial estimate and 
$1=\chi_{Q_1^*}(x)\leq \chi_{Q_l^{**}}(x)$.
For the second term, 
we observe that the assumptions 
$x\in Q_1^*$ and $t\leq \ell(Q_1)$
imply $B(x,t)\subset Q_1^{**}$ and hence 
$B(x,t)\cap Q_l^{**}=\emptyset$.
This observation allows us to use \eqref{eq.3D4}.
As a result, 
\[
\prod_{l=1}^m
\Big(
\frac{1}{t^n}
\int_{B(x,t)}
|T_{\sigma_l}a_l(y)|^mdy
\Big)^{\frac1m}
\lesssim
\prod_{l:Q_1^{**}\cap Q_l^{**}\neq \emptyset}
\chi_{Q_l^{**}}(x)
M|T_{\sigma_l}(a_l)|^m(x)^{\frac{1}{m}}
\prod_{l:Q_1^{**}\cap Q_l^{**}=\emptyset}
M\chi_{Q_l}(x)^\frac{n+N+1}{mn}.
\] 
}
\fi
%%%%%%%%%%%%%%%%%%%%%%%%%%%%%%%%%%%%%%
\begin{align*}
\frac{1}{t^n}
\int_{B(x,t)}
|{\mathcal T}_\sigma(a_1,\ldots,a_m)(y)|\,dy
\lesssim&
\prod_{l=1}^m
\Big(
\frac{1}{t^n}
\int_{B(x,t)}
|T_{\sigma_l}a_l(y)|^mdy
\Big)^{\frac1m}. 
\end{align*}
Now, we decompose the above product depending on two sub-cases; 
$B(t,x)\cap Q_l^{**}=\emptyset$ or not.
Then
\begin{align*}
&\prod_{l=1}^m
\Big(
\frac{1}{t^n}
\int_{B(x,t)}
|T_{\sigma_l}a_l(y)|^mdy
\Big)^{\frac1m}\\
=&
\prod_{l:B(t,x)\cap Q_l^{**}=\emptyset}
\Big(
\frac{1}{t^n}
\int_{B(x,t)}
|T_{\sigma_l}a_l(y)|^mdy
\Big)^{\frac1m}
\prod_{l:B(t,x)\cap Q_l^{**}\neq \emptyset}
\Big(
\frac{1}{t^n}
\int_{B(x,t)}
|T_{\sigma_l}a_l(y)|^mdy
\Big)^{\frac1m}. 
\end{align*}
For the first sub-case, 
we employ 
\eqref{eq.3D4}. For the second sub-case,
we observe that the assumption
$t\leq \ell(Q_1)\le \ell(Q_l)$
imply $B(x,t)\subset 3Q_l^{**}$.
As a result, 
\begin{align*}
\lefteqn{
\prod_{l=1}^m
\Big(
\frac{1}{t^n}
\int_{B(x,t)}
|T_{\sigma_l}a_l(y)|^mdy
\Big)^{\frac1m}
}\\
&\lesssim
\prod_{l:B(x,t)\cap Q_l^{**}\neq \emptyset}
\chi_{3Q_l^{**}}(x)
M^{(m)} \circ T_{\sigma_l}(a_l)(x)
\prod_{l:B(x,t)\cap Q_l^{**}=\emptyset}
M\chi_{Q_l}(x)^\frac{n+N+1}{mn}.
\end{align*} 
Thus
\begin{equation}
\label{eq.4A03}
\Big|
\int_{{\mathbb R}^n} \phi_t(x-y){\mathcal T}_\sigma(a_1,\ldots,a_m)(y)\,dy
\Big|
\lesssim
\prod_{l=1}^m
M\chi_{Q_l}(x)^\frac{n+N+1}{mn}
\left(
1+M^{(m)} \circ T_{\sigma_l}(a_l)(x)
\right).
\end{equation}

\textbf{Case 2: $t>\ell(Q_1)$}. Now we can estimate
\begin{align*}
\frac{1}{t^n}
\int_{B(x,t)}
|{\mathcal T}_\sigma(a_1,\ldots,a_m)(y)|\,dy
\lesssim&
\frac{1}{|Q_1^*|}
\int_{\mathbb{R}^n}
|{\mathcal T}_\sigma(a_1,\ldots,a_m)(y)|\,dy\\
=&
\frac{1}{|Q_1^*|}
\int_{Q_1^*}
|{\mathcal T}_\sigma(a_1,\ldots,a_m)(y)|\,dy\\
&+
\frac{1}{|Q_1^*|}
\int_{\mathbb{R}^n\setminus Q_1^*}
|{\mathcal T}_\sigma(a_1,\ldots,a_m)(y)|\,dy.
\end{align*}
By the H\"older inequality and \eqref{eq.3D4}, 
the similar technique to \eqref{eq.4A03} 
yields 
\begin{align}
\notag
\frac{1}{|Q_1^*|}
\int_{Q_1^*}
|{\mathcal T}_\sigma(a_1,\ldots,a_m)(y)|\,dy
\lesssim&
\prod_{l=1}^m
\Big(
\frac{1}{|Q_1^*|}
\int_{Q_1^*}
|T_{\sigma_l}a_l(y)|^m\,dy
\Big)^{\frac1m}
\\
\notag
\lesssim&
\prod_{l=1}^m
\Big(
\inf_{z \in Q_{1}^*}
M\chi_{Q_{l}^{**}}(z)^{\frac{n+N+1}{mn}}
+
\inf_{z \in Q_{1}^*}
M^{(m)} \circ T_{\sigma_l}(a_l)(z)
\chi_{3Q_l^{**}}(z)
\Big)
\\
\label{eq.4A05}
\lesssim&
\prod_{l=1}^m
M\chi_{Q_l}(x)^\frac{n+N+1}{mn}
\left(
1+M^{(m)} \circ T_{\sigma_l}(a_l)(x)
\right), 
\end{align}
since $x\in Q_1^*$.
For the second term, using the decay of $T_{\sigma_1}a_1(y)$  when $y\notin Q_1^*$ as in Lemma \ref{lm.3A1},
we obtain
$$%\begin{align*}
%&
\frac{1}{|Q_1^*|}
\int_{{\mathbb R}^n \setminus Q_1^*}
|{\mathcal T}_\sigma(a_1,\ldots,a_m)(y)|\,dy
%\\
%&
\lesssim
\frac{1}{|Q_1^*|}
\int_{{\mathbb R}^n \setminus Q_1^*}
\frac{\ell(Q_1)^{n+N+1}}{|y-c_1|^{n+N+1}}
\prod_{l=2}^m |T_{\sigma_l}a_l(y)|\,dy.
$$%\end{align*}
We decompose ${\mathbb R}^n \setminus Q_1^*$
into dyadic annuli and estimate
\begin{align*}
&
\frac{1}{|Q_1^*|}
\int_{{\mathbb R}^n \setminus Q_1^*}
|{\mathcal T}_\sigma(a_1,\ldots,a_m)(y)|\,dy
\\
&\lesssim
\sum_{j=1}^\infty
2^{j(-N-1)}
\frac{1}{|2^jQ_1^*|}
\int_{2^{j}Q_1^*}
\chi_{2^{j}Q_1^*}(y)
\prod_{l=2}^m |T_{\sigma_l}a_l(y)|\,dy\\
&\lesssim
\sum_{j=1}^\infty
2^{j(-N-1)}
\prod_{l=2}^m
\Big(
\frac{1}{|2^jQ_1^*|}\int_{2^jQ_1^*}|T_{\sigma_l}a_l(y)|^m\,dy
\Big)^{\frac1m}\\
&\lesssim
\sum_{j=1}^\infty
2^{j(-N-1)}
\prod_{l=2}^m
\Big(
\inf_{z\in 2^jQ_1^*}(M\chi_{2^jQ_l^{**}})(z)^{\frac{n+N+1}{mn}}
+
\inf_{z\in 2^jQ_1^*}
M^{(m)} \circ T_{\sigma_l}(a_l)(z)
\chi_{2^{j+1}Q_l^{**}}(z)
\Big), %\\
%&\lesssim
%\prod_{l=2}^m
%\sum_{j=1}^\infty
%\Big(
%2^{-\frac{j(2N+2)}{2m-1}}
%\inf_{z\in 2^jQ_1^*}(M\chi_{2^jQ_l^{**}})(z)^{\frac{n+N+1}{n}}
%+
%2^{-\frac{j(2N+2)}{2m-1}}
%\inf_{z\in Q_1^*}
%M|T_{\sigma_l}a_l|^m(z)^{\frac1m}
%\chi_{2^{j+1}Q_l^{**}}(z)
%\Big),
\end{align*}
where we used \eqref{eq.3D4} in the last inequality.

Since $M\chi_{2^jQ}\lesssim 2^{jn}M\chi_Q$,
\[
%(M\chi_{2^jQ_l^{**}})^{\frac{n+N+1}{n}}
\chi_{2^{j+1}Q_l^{**}}(x)
\le 
(M\chi_{2^jQ_l^{**}})^{\frac{n+N+1}{mn}}
\lesssim
2^{\frac{j(n+N+1)}{m}}
M\chi_{Q_l}^{\frac{n+N+1}{mn}}.
\]
Insert this inequality into the previous estimate to obtain
\begin{align}
\notag
&\frac{1}{|Q_1^*|}
\int_{{\mathbb R}^n \setminus Q_1^*}
|{\mathcal T}_\sigma(a_1,\ldots,a_m)(y)|\,dy
\\
\notag
&\lesssim
\sum_{j=1}^\infty
2^{-j(\frac{n+N+1}{m}-n)}
\prod_{l=1}^m
\Big(
M\chi_{Q_l}(x)^{\frac{n+N+1}{mn}}
+
M^{(m)} \circ T_{\sigma_l}(a_l)(x)
M\chi_{Q_l}(x)^\frac{n+N+1}{mn}
\Big)
\\
\label{eq.4E00}
&\lesssim
\prod_{l=1}^m
M\chi_{Q_l}(x)^\frac{n+N+1}{mn}
\left(
1+M^{(m)} \circ T_{\sigma_l}(a_l)(x)
\right), 
\end{align}
since $N \gg n$.
Combining \eqref{eq.4A03}--\eqref{eq.4E00} together completes the proof of \eqref{eq.4A01}.
\end{proof}

\begin{lemma}\label{lm.4F01}
Assume
$x \notin Q_1^{**}$ and $c_1 \notin B(x,100n^2t)$.
Then %if we choose $B^1_{l,0}=b_{l,0}$ satisfying $(\ref{eq:160726-1})$ suitably,
%then
we have 
\begin{align*}
\frac{1}{t^n}
\int_{B(x,t)}|{\mathcal T}_\sigma(a_1,\ldots,a_m)(y)|\,dy
\lesssim 
\prod_{l=1}^m 
M\chi_{Q_l}(x)^\frac{n+N+1}{mn}
\left(
1+M^{(m)} \circ T_{\sigma_l}(a_l)(x)
\right).
\end{align*}
\end{lemma}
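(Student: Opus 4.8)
The plan is to imitate the proof of Lemma~\ref{lm.3A10}(1), but now to extract the decisive decay from the single \emph{linear} factor $T_{\sigma_1}(a_1)$ instead of from a multilinear kernel. By the triangle inequality it suffices to treat one product term, so, exactly as in the proof of Lemma~\ref{lm.4A00}, I would assume $\mathcal{T}_\sigma(a_1,\dots,a_m)=T_{\sigma_1}(a_1)\cdots T_{\sigma_m}(a_m)$; no cancellation of $\mathcal{T}_\sigma$ is needed here, only the kernel decay of the linear operator $T_{\sigma_1}$. First I note that $x\notin Q_1^{**}$ and $c_1\notin B(x,100n^2 t)$ force, for every $y\in B(x,t)$, that $y\notin Q_1^*$, that $|y-c_1|\approx|x-c_1|$, and that $|x-c_1|\gtrsim\ell(Q_1)$; hence Lemma~\ref{lm.3A1}, applied with $m=1$, yields the pointwise bound
\[
|T_{\sigma_1}(a_1)(y)|\lesssim\frac{\ell(Q_1)^{n+N+1}}{|x-c_1|^{n+N+1}}\approx M\chi_{Q_1}(x)^{\frac{n+N+1}{n}}\qquad(y\in B(x,t)),
\]
a quantity independent of $y$.

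Next I would apply H\"older's inequality with all $m$ exponents equal to $m$, obtaining
\[
\frac{1}{t^n}\int_{B(x,t)}\prod_{l=1}^m|T_{\sigma_l}(a_l)(y)|\,dy\le\prod_{l=1}^m I_l,\qquad I_l:=\Big(\frac{1}{t^n}\int_{B(x,t)}|T_{\sigma_l}(a_l)(y)|^m\,dy\Big)^{1/m}.
\]
The previous step gives $I_1\lesssim M\chi_{Q_1}(x)^{\frac{n+N+1}{n}}=\prod_{l=1}^m M\chi_{Q_1}(x)^{\frac{n+N+1}{mn}}$; one of these $m$ identical factors serves as the $l=1$ factor of the asserted bound, and the remaining $m-1$ are distributed, one to each $l\in\{2,\dots,m\}$. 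It therefore suffices to prove, for each fixed $l\ge2$,
\[
M\chi_{Q_1}(x)^{\frac{n+N+1}{mn}}\,I_l\lesssim M\chi_{Q_l}(x)^{\frac{n+N+1}{mn}}\bigl(1+M^{(m)}\circ T_{\sigma_l}(a_l)(x)\bigr).
\]

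To estimate $I_l$ I would split into cases according to the position of $Q_l$ relative to $B(x,t)$. If $B(x,t)\cap Q_l^{**}=\emptyset$ and $|x-c_l|\ge2t$, then $y\notin Q_l^*$ and $|y-c_l|\approx|x-c_l|$ on $B(x,t)$, so Lemma~\ref{lm.3A1} gives $I_l\lesssim\ell(Q_l)^{n+N+1}/|x-c_l|^{n+N+1}\approx M\chi_{Q_l}(x)^{\frac{n+N+1}{n}}\le M\chi_{Q_l}(x)^{\frac{n+N+1}{mn}}$, and $M\chi_{Q_1}(x)\le1$ closes this case. If $B(x,t)\cap Q_l^{**}\neq\emptyset$ and $t<\ell(Q_l)$, then $x$ lies in a fixed dilate of $Q_l$, so $M\chi_{Q_l}(x)\gtrsim1$, while $I_l\le M^{(m)}\circ T_{\sigma_l}(a_l)(x)$ by definition; again $M\chi_{Q_1}(x)\le1$ suffices. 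In the remaining cases — $B(x,t)\cap Q_l^{**}\neq\emptyset$ with $t\ge\ell(Q_l)$, or $B(x,t)\cap Q_l^{**}=\emptyset$ with $|x-c_l|<2t$ — a short geometric argument gives $M\chi_{Q_l}(x)\gtrsim(\ell(Q_l)/t)^n$, while $x\notin Q_1^{**}$, $\ell(Q_1)\le\ell(Q_l)$ and $|x-c_1|>100n^2 t$ give $M\chi_{Q_1}(x)\approx(\ell(Q_1)/|x-c_1|)^n\lesssim(\ell(Q_l)/t)^n$; moreover $I_l\lesssim(\ell(Q_l)/t)^{n/m}\le1$, either from the $L^m$-boundedness of $T_{\sigma_l}$ together with $\|a_l\|_{L^m}\le|Q_l|^{1/m}$, or — when $B(x,t)$ misses $Q_l^{**}$ — by integrating the decay estimate of Lemma~\ref{lm.3A1} over $\mathbb{R}^n\setminus Q_l^*$, which converges since $m(n+N+1)>n$. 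Combining these,
\[
M\chi_{Q_1}(x)^{\frac{n+N+1}{mn}}I_l\lesssim\Big(\frac{\ell(Q_l)}{t}\Big)^{\frac{n+N+1}{m}+\frac{n}{m}}\le\Big(\frac{\ell(Q_l)}{t}\Big)^{\frac{n+N+1}{m}}\lesssim M\chi_{Q_l}(x)^{\frac{n+N+1}{mn}}.
\]
Multiplying the $m-1$ resulting inequalities and restoring the leftover $l=1$ factor of $I_1$ proves the lemma.

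The crux I expect is this last family of cases, where $Q_l$ is comparable to or closer than $B(x,t)$: there the honest bound for $I_l$ only produces $(\ell(Q_l)/t)^{n/m}$ rather than the $(\ell(Q_l)/t)^{(n+N+1)/m}$ one would want from a direct comparison with $M\chi_{Q_l}(x)^{\frac{n+N+1}{mn}}$, and the deficit of $N+1$ powers of $\ell(Q_l)/t$ must be absorbed by the surplus decay $M\chi_{Q_1}(x)^{\frac{(m-1)(n+N+1)}{mn}}$ supplied by $T_{\sigma_1}(a_1)$ — which is legitimate precisely because $\ell(Q_1)$ is the smallest side length and $|x-c_1|\gtrsim t$. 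Establishing the supporting geometric estimates (the comparison $M\chi_{Q_1}(x)\lesssim M\chi_{Q_l}(x)$ and the lower bound $M\chi_{Q_l}(x)\gtrsim(\ell(Q_l)/t)^n$) uniformly over all sub-cases is the only delicate bookkeeping; the rest is routine.
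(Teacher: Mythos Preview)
Your argument is correct and follows essentially the same strategy as the paper: both proofs extract the full decay $(\ell(Q_1)/|x-c_1|)^{n+N+1}$ from the factor $T_{\sigma_1}(a_1)$ on $B(x,t)$, apply H\"older to the remaining factors, and then, index by index, use either kernel decay, the maximal operator $M^{(m)}\!\circ T_{\sigma_l}$, or the transfer $\ell(Q_1)/|x-c_1|\lesssim \ell(Q_l)/|x-c_l|$ (via $\ell(Q_1)\le\ell(Q_l)$ and $|x-c_l|\lesssim t\lesssim|x-c_1|$). The only cosmetic difference is organizational: the paper partitions the indices $l\ge2$ upfront into sets $J_0,J_1,\{2,\dots,m\}\setminus J$ and applies H\"older with $L^\infty$ on $\{1\}\cup J_0$ and $L^m$-averages on $J_1\cup J^c$, whereas you apply a uniform $L^m$ H\"older and then run an equivalent case split per index; the three regimes and the decisive ``borrowing'' of surplus $Q_1$-decay coincide.
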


\begin{proof}
Fix any $x\notin Q_1^{**}$ and $t>0$ such that 
$c_1\notin B(x,100n^2t)$. 
We denote
\begin{equation}
\label{lm.4F02}
J=
\{
2\le l\le m: 
x\notin Q_l^{**}
\},\ 
J_0 = \{l\in J : B(x,2t)\cap Q_l^*=\emptyset\},\
J_1 = J\setminus J_0.
\end{equation}
Similar to the previous lemma, it is enough to consider the reduced form \eqref{eq.4A02} of $\mathcal{T}_\sigma$.
From the H\"{o}lder inequality, we have 
\begin{align*}
&\frac{1}{t^n}
\int_{B(x,t)}
|\mathcal{T}_\sigma(a_1,\ldots,a_m)(y)|dy\\
&\lesssim
\|
T_{\sigma_1}a_1\chi_{B(x,t)}
\|_{L^\infty}
\prod_{l\in J_0}
\|T_{\sigma_l}a_l\chi_{B(x,t)}\|_{L^{\infty}}\\
&\quad \times
\prod_{l\in J_1}
\left(
\frac{1}{|B(x,t)|}
\int_{B(x,t)}|T_{\sigma_l}a_l(y)|^mdy
\right)^{\frac{1}{m}}
%&\quad \times 
\prod_{l\notin J}
\left(
\frac{1}{|B(x,t)|}
\int_{B(x,t)}|T_{\sigma_l}a_l(y)|^mdy
\right)^{\frac{1}{m}}\\
&=:
{\rm I}\times {\rm II}\times {\rm III}\times {\rm IV}.
\end{align*}
For $\rm I$, we notice 
$Q_1^*\cap B(x,2t)=\emptyset$ since 
we have 
$x\notin Q_1^{**}$ and 
$c_1\notin B(x,100n^2t)$. 
%\footnote{
%In fact, 
%if $Q_1^*\cap B(x,t)\neq \emptyset$, 
%$x\notin Q_1^**$ implies 
%$t\gtrsim \ell(Q_1)$. 
%Then from $B(x,t)\cap Q_1^*\neq \emptyset$ again, 
%we see 
%$
%Q_1\subset 
%B(x,100n^2t). 
%$
%But, this is a contradiction to 
%$c_1\notin B(x,100n^nt)$.
%}
So, 
we have only to use the decay estimate for $T_{\sigma_1}a_1$ to get 
\[
{\rm I}=
\|T_{\sigma_1}a_1\chi_{B(x,t)}\|_{L^\infty}
\lesssim 
\left(
\frac{\ell(Q_1)}{
|x-c_1|+\ell(Q_1)
}
\right)^{n+N+1}.
\]

For all 
$l\in J_1$,
since $B(x,2t)\cap Q_l^*\neq\emptyset$, $t\gtrsim \ell(Q_l)$; 
and hence, $Q_l^*\subset B(x,100n^2t)$. Therefore,
\begin{equation}
\label{eq.4E01}
\left(
\frac{1}{|B(x,t)|}
\int_{B(x,t)}|T_{\sigma_l}a_l(y)|^mdy
\right)^{\frac{1}{m}}
\lesssim 
\left(
\frac{|Q_l|}{|B(x,t)|}
\right)^{\frac{1}{m}}
\lesssim 1. 
\end{equation}
for all $l\in J_1$.
Now combining the above inequality with the estimates 
for $\rm I$ yields
\begin{equation}
\label{eq.4B01}
{\rm I}\times {\rm III}
\lesssim 
\left(
\frac{\ell(Q_1)}{
|x-c_1|+\ell(Q_1)
}
\right)^{\frac{n+N+1}{m}}
\prod_{l\in J_1}
\left(
\frac{\ell(Q_1)}{|x-c_1|+\ell(Q_1)}
\right)^{\frac{n+N+1}{m}}.
\end{equation}
As showed about $Q_l^*\subset B(x,100n^2t)$ for all $l\in J_1$. This implies $|x-c_l|\lesssim t$. Furthermore, 
$c_1\notin B(x,100n^2t)$ means 
$t\lesssim |x-c_1|$ which yields $|x-c_l|\lesssim t\lesssim |x-c_1|$.

Recalling $\ell(Q_1)\leq \ell(Q_l)$, 
we see that
\[
\frac{\ell(Q_1)}{
|x-c_1|+\ell(Q_1)
}
\lesssim 
\frac{\ell(Q_l)}{
|x-c_l|+\ell(Q_l)
}.
\]

%%%%%%%%%%%%%%%
%%%%%%%%%%%%%%%

From \eqref{eq.4B01}, we obtain 
\begin{align}
%\nonumber
{\rm I}\times {\rm III}
\lesssim 
M\chi_{Q_1^{**}}(x)^{\frac{n+N+1}{m n}}
\label{eq:161003-11}
\prod_{l\in J_1}
M\chi_{Q_l}(x)^\frac{n+N+1}{m n}.
\end{align}

Now, we turn to the estimate 
for ${\rm II}$ and ${\rm IV}$. 
For ${\rm II}$, we have only to employ 
the moment condition of $a_l$
to get 
\begin{align}
\label{eq:161003-12}
{\rm II}
&=
\prod_{l\in J_0}
\|T_{\sigma_l}a_l \cdot \chi_{B(x,t)}\|_{L^\infty}
\lesssim
%\prod_{l\in J_x; B(x,t)\cap Q_l^*=\emptyset}
%\left(
%\frac{\ell(Q_l)}{|x-c_l|+\ell(Q_l)}
%\right)^{n+N+1}
%|Q_l|^{-\frac{1}{p_l}}\\
%&\lesssim
\prod_{l\in J_0}
M\chi_{Q_l}(x)^{\frac{n+N+1}{n}}.
\end{align}
For ${\rm IV}$, since $x\in Q_l^{**}$, we can estimate 
\begin{align}
\label{eq:161003-13}
{\rm IV}
&\lesssim 
\prod_{l\notin J}
M^{(m)} \circ T_{\sigma_l}(a_l)(x)
\chi_{Q_l^{**}}(x)
\end{align}
%Modifying large numbers $N,M$ suitably again 
Putting (\ref{eq:161003-11})--(\ref{eq:161003-13}) together, 
we conclude the proof of Lemma \ref{lm.4F01}. 
\end{proof}

\begin{lemma}\label{lem:160722-3}
Assume
$x \notin Q_1^{**}$ and $c_1 \in B(x,100n^2t)$.
Then we have 
\begin{align}
\notag
&
\frac{\ell(Q_1)^{s+1}}{t^{n+s+1}}
\int_{Q_1^{*}}|{\mathcal T}_\sigma(a_1,\ldots,a_m)(y)|\,dy\\
\label{eq:160726-123}
&
\lesssim M\chi_{Q_1}(x)^{\frac{n+s+1}{n}}
\prod_{l=1}^m
\inf_{z\in Q_1^*}
M\chi_{Q_l}(z)^\frac{n+N+1}{mn}
\left(
1+M^{(m)} \circ T_{\sigma_l}(a_l)(z)
\right).
\end{align}
\end{lemma}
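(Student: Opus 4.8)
The plan is to mimic, for the product type, the two-part structure already used to prove \eqref{eq.3A11} in Lemma~\ref{lm.3A10}: first I would bound the average of $|\mathcal{T}_\sigma(a_1,\ldots,a_m)|$ over $Q_1^*$ by a product of local maximal-type quantities, and then convert the leftover factor $(\ell(Q_1)/t)^{n+s+1}$ into $M\chi_{Q_1}(x)^{(n+s+1)/n}$ using the geometry of the hypotheses $x\notin Q_1^{**}$ and $c_1\in B(x,100n^2t)$. As in the other proofs of this section, the triangle inequality lets us work with the single-term reduced operator \eqref{eq.4A02}, i.e., $\mathcal{T}_\sigma(a_1,\ldots,a_m)=T_{\sigma_1}(a_1)\cdots T_{\sigma_m}(a_m)$; the cancellation of $a_1$ is \emph{not} needed here (it enters only in the companion estimate over the complement $(Q_1^*)^c$).

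For the first part I would start from H\"older's inequality,
\[
\frac{1}{|Q_1^*|}\int_{Q_1^*}|\mathcal{T}_\sigma(a_1,\ldots,a_m)(y)|\,dy
\le
\prod_{l=1}^m\Big(\frac{1}{|Q_1^*|}\int_{Q_1^*}|T_{\sigma_l}a_l(y)|^m\,dy\Big)^{1/m},
\]
and then estimate each factor exactly as in the derivation of \eqref{eq.4A05}: for a fixed $l$, if $Q_1^{**}\cap Q_l^{**}=\emptyset$ then, since $\ell(Q_1)\le\ell(Q_l)$, \eqref{eq.3D4} applied to the linear operator $T_{\sigma_l}$ (with $m=1$, $j=0$, $Q_0=Q_1$) bounds the factor by $\inf_{z\in Q_1^*}M\chi_{Q_l^{**}}(z)^{(n+N+1)/(mn)}$ (after lowering the exponent via $M\chi\le1$); if $Q_1^{**}\cap Q_l^{**}\ne\emptyset$ then $Q_1^*\subset 3Q_l^{**}$ and, because $Q_1^*$ lies in a ball of radius $\lesssim\ell(Q_1)$ about any of its points, the factor is bounded by $\inf_{z\in Q_1^*}M^{(m)}\circ T_{\sigma_l}(a_l)(z)\,\chi_{3Q_l^{**}}(z)$. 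Using $M\chi_{3Q}\lesssim M\chi_Q$ and the fact that $\inf_{z\in Q_1^*}M\chi_{Q_l}(z)\gtrsim1$ whenever $Q_1^*$ meets $3Q_l^{**}$, the two cases merge into
\[
\Big(\frac{1}{|Q_1^*|}\int_{Q_1^*}|T_{\sigma_l}a_l(y)|^m\,dy\Big)^{1/m}
\lesssim
\inf_{z\in Q_1^*}M\chi_{Q_l}(z)^{\frac{n+N+1}{mn}}\big(1+M^{(m)}\circ T_{\sigma_l}(a_l)(z)\big),
\]
which is precisely \eqref{eq.4A05}. Taking the product over $l$ gives the desired bound for $\frac{1}{|Q_1^*|}\int_{Q_1^*}|\mathcal{T}_\sigma(a_1,\ldots,a_m)|$.

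For the second part I would multiply by $|Q_1^*|\approx\ell(Q_1)^n$, so that the left-hand side of \eqref{eq:160726-123} is at most $(\ell(Q_1)/t)^{n+s+1}$ times the product on the right. The hypotheses $x\notin Q_1^{**}$ and $c_1\in B(x,100n^2t)$ first force $\ell(Q_1)\lesssim t$ and then $Q_1^*\subset B(x,1000n^2t)$; consequently $\ell(Q_1)^n=|Q_1|=|Q_1\cap B(x,1000n^2t)|\le|B(x,1000n^2t)|\,M\chi_{Q_1}(x)\lesssim t^nM\chi_{Q_1}(x)$, whence $\ell(Q_1)/t\lesssim M\chi_{Q_1}(x)^{1/n}$ and $(\ell(Q_1)/t)^{n+s+1}\lesssim M\chi_{Q_1}(x)^{(n+s+1)/n}$. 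This is exactly the geometric passage that produced \eqref{eq.3A11}, and substituting it yields \eqref{eq:160726-123}.

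Since the per-factor estimate is already contained in the proof of Lemma~\ref{lm.4A00} and the geometric reduction in the proof of Lemma~\ref{lm.3A10}, I do not expect a genuine obstacle; the only delicate point is the bookkeeping in the case split $Q_1^{**}\cap Q_l^{**}=\emptyset$ versus $\ne\emptyset$ --- in particular, the harmless exponent adjustments $M\chi\le1$ and $M\chi_{3Q}\lesssim M\chi_Q$, and the observation that $\inf_{z\in Q_1^*}M\chi_{Q_l}(z)\gtrsim1$ exactly on the intersecting case, so that the ``$1+$'' in $1+M^{(m)}\circ T_{\sigma_l}(a_l)$ can absorb the stray $M^{(m)}$ term.
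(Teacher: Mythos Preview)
Your proposal is correct and follows essentially the same route as the paper: H\"older to factor the integral over $Q_1^*$, then the per-factor dichotomy (invoking \eqref{eq.3D4} when $Q_1^{**}\cap Q_l^{**}=\emptyset$ and the $M^{(m)}$ bound otherwise) exactly as in \eqref{eq.4A05}, and finally the geometric reduction $\ell(Q_1)/t\lesssim M\chi_{Q_1}(x)^{1/n}$ from the hypotheses $x\notin Q_1^{**}$, $c_1\in B(x,100n^2t)$. The paper's write-up compresses the case split into the single phrase ``deduced from \eqref{eq.3D4},'' but your more explicit bookkeeping is the same argument.
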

\begin{proof}
It is enough to restrict $\mathcal{T}_\sigma$ to the form  \eqref{eq.4A02}. By the H\"older inequality we have
\begin{align*}
&
\frac{\ell(Q_1)^{s+1}}{t^{n+s+1}}
\int_{Q_1^*}|{\mathcal T}_\sigma(a_1,\ldots,a_m)(y)|\,dy\\
\le&
\frac{\ell(Q_1)^{n+s+1}}{t^{n+s+1}}
\prod_{l=1}^m
\Big(
\frac{1}{|Q_1^*|}
\int_{Q_1^*}|T_{\sigma_l}a_l(y)|^mdy
\Big)^{\frac1m}\\
\lesssim&
\frac{\ell(Q_1)^{n+s+1}}{t^{n+s+1}}
\prod_{l=1}^m
\Big(
\inf_{z\in Q_1^*}
M\chi_{Q_l}(z)^{\frac{n+N+1}{n}}
+
\inf_{z\in Q_1^*}
M^{(m)} \circ T_{\sigma_l}(a_l)(z)
\chi_{2Q_l^{**}}(z)
\Big),
\end{align*}
where the last inequality is deduced from \eqref{eq.3D4}.

Since $x\notin Q_1^{**}$ and $c_1\in B(x,100n^2t)$, 
$Q_1 \subset B(x,10000n^3t)$ which implies 
$
\ell(Q_1)/t\lesssim M\chi_{Q_1}(x)^\frac{1}{n}
$. %if $Q_1^* \cap Q_l^*=\emptyset$.
As a result,
\begin{align*}
&
\frac{\ell(Q_1)^{s+1}}{t^{n+s+1}}
\int_{Q_1^*}|{\mathcal T}_\sigma(a_1,\ldots,a_m)(y)|\,dy\\
&\lesssim
M\chi_{Q_1}(x)^\frac{n+s+1}{n}
\prod_{l=1}^m
\inf_{z \in Q_1^*}
M\chi_{Q_l}(z)^\frac{n+N+1}{mn}
\left(
1+M^{(m)} \circ T_{\sigma_l}(a_l)(z)
\right).
\end{align*}
This proves
(\ref{eq:160726-123}).
\end{proof}

\begin{lemma}\label{lem:160722-4}
Assume
$x \notin Q_1^{**}$ and $c_1 \in B(x,100n^2t)$.
Then %if we choose $B^4_{l,j}=b_{l,j}$ satisfying $(\ref{eq:160726-1})$ suitably,
%then 
we have 
\begin{align*}
&
\frac{1}{t^{n+s+1}}
\int_{{\mathbb R}^n \setminus Q_1^*}
|y-c_1|^{s+1}|{\mathcal T}_\sigma(a_1,\ldots,a_m)(y)|\,dy\\
&
\lesssim 
M\chi_{Q_1}(x)^{\frac{n+s+1}{n}}
\prod_{l=1}^m
\inf_{z\in Q_1^*}M\chi_{Q_l}(z)^{\frac{n+N+1}{mn}}
\left(
1+M^{(m)} \circ T_{\sigma_l}(a_l)(z)
\right).
\end{align*}
\end{lemma}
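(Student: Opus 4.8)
The plan is to mimic the proof of Lemma~\ref{lem:160722-3}, but this time we must keep track of the extra weight $|y-c_1|^{s+1}$ and the decay of $T_{\sigma_1}a_1(y)$ for $y$ outside $Q_1^*$. First I would reduce, as in the previous lemmas, to a single product term of the form \eqref{eq.4A02}, so that $\mathcal{T}_\sigma(a_1,\ldots,a_m)(y)=T_{\sigma_1}(a_1)(y)\cdots T_{\sigma_m}(a_m)(y)$. Next I would decompose $\mathbb{R}^n\setminus Q_1^*$ into the dyadic annuli $A_j=2^jQ_1^*\setminus 2^{j-1}Q_1^*$, $j\ge 1$. On each annulus we have $|y-c_1|\approx 2^j\ell(Q_1)$, so the weight contributes a factor $(2^j\ell(Q_1))^{s+1}$, while Lemma~\ref{lm.3A1} (applied with $\Lambda=\{1\}$) gives the decay $|T_{\sigma_1}a_1(y)|\lesssim \ell(Q_1)^{n+N+1}/|y-c_1|^{n+N+1}\approx 2^{-j(n+N+1)}\ell(Q_1)^{-n}$.

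Then I would apply H\"older's inequality in the remaining $m-1$ factors over the ball $2^jQ_1^*$, exactly as in the estimate \eqref{eq.4E00} from the proof of Lemma~\ref{lm.4A00}: writing
\[
\frac{1}{|2^jQ_1^*|}\int_{2^jQ_1^*}\prod_{l=2}^m|T_{\sigma_l}a_l(y)|\,dy
\lesssim
\prod_{l=2}^m\Big(\frac{1}{|2^jQ_1^*|}\int_{2^jQ_1^*}|T_{\sigma_l}a_l(y)|^m\,dy\Big)^{1/m},
\]
and invoking \eqref{eq.3D4} together with the elementary bound $M\chi_{2^jQ}\lesssim 2^{jn}M\chi_Q$ (hence $\chi_{2^{j+1}Q_l^{**}}(x)\lesssim 2^{j(n+N+1)/m}M\chi_{Q_l}(x)^{(n+N+1)/(mn)}$) to convert each local average into $\inf_{z\in Q_1^*}M\chi_{Q_l}(z)^{(n+N+1)/(mn)}(1+M^{(m)}\circ T_{\sigma_l}(a_l)(z))$, up to a power of $2^j$. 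Summing over the annuli, the total power of $2^j$ is of the form $2^{j(s+1+n-(n+N+1))}=2^{-j(N-s)}$ after accounting for the weight, the decay, and the $m-1$ volume-to-maximal conversions; since $N\gg s$ this geometric series converges. Finally, using $c_1\in B(x,100n^2t)$ and $x\notin Q_1^{**}$ one has $\ell(Q_1)/t\lesssim M\chi_{Q_1}(x)^{1/n}$, so the prefactor $\ell(Q_1)^{n+s+1}/t^{n+s+1}$ becomes $M\chi_{Q_1}(x)^{(n+s+1)/n}$, yielding the claimed bound.

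The main obstacle is the bookkeeping of the exponents across the three competing scalings on each annulus: the growth $2^{j(s+1)}\ell(Q_1)^{s+1}$ from the weight, the decay $2^{-j(n+N+1)}\ell(Q_1)^{n+N+1}$ from $T_{\sigma_1}a_1$, and the $2^{jn(m-1)/m \cdot \text{(stuff)}}$-type losses incurred when replacing averages over $2^jQ_1^*$ by maximal functions centered on $Q_1^*$. One has to check that, after the H\"older split into $m-1$ factors each raised to $1/m$ and each paying $2^{j(n+N+1)/m}$, the net exponent on $2^j$ is still negative; this is precisely where the choice $N=m(n+1+2s)$ is used, and it is worth writing the arithmetic $-(n+N+1)+(s+1)+(m-1)\cdot\frac{n+N+1}{m} = s+1-\frac{n+N+1}{m} < -s$ explicitly. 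A secondary point is the distinction, as in Lemma~\ref{lm.4F01}, between indices $l$ with $Q_1^{**}\cap Q_l^{**}=\emptyset$ (where one uses the decay of $T_{\sigma_l}a_l$ and the moment condition) and those with $Q_1^{**}\cap Q_l^{**}\neq\emptyset$ (where $\chi_{Q_l^{**}}(x)=1$ and the $M^{(m)}\circ T_{\sigma_l}(a_l)$ term survives); one should treat these two families separately inside the product over $l$, but both cases are absorbed into the single uniform bound $\inf_{z\in Q_1^*}M\chi_{Q_l}(z)^{(n+N+1)/(mn)}(1+M^{(m)}\circ T_{\sigma_l}(a_l)(z))$.
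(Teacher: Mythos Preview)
Your proposal is correct and follows the paper's proof essentially line by line: decay of $T_{\sigma_1}a_1$ via Lemma~\ref{lm.3A1}, dyadic annuli on $(Q_1^*)^c$, H\"older plus \eqref{eq.3D4} for the remaining $m-1$ factors, the conversion $M\chi_{2^jQ}\lesssim 2^{jn}M\chi_Q$, summation of the geometric series, and the final step $\ell(Q_1)/t\lesssim M\chi_{Q_1}(x)^{1/n}$. The only slip is in the explicit arithmetic you singled out: you omitted the $+n$ coming from the annulus volume $|2^jQ_1^*|\sim(2^j\ell(Q_1))^n$ (and the stray ``$\ell(Q_1)^{-n}$'' in the decay line is spurious), so the true net $2^j$-exponent is $s+1+n-\tfrac{n+N+1}{m}$ rather than $s+1-\tfrac{n+N+1}{m}$; this is still negative for $N=m(n+1+2s)$, so the argument goes through unchanged.
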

\begin{proof}
Using the decay of $T_{\sigma_1}a_1(y)$ when $y\notin Q_1^*$,
we obtain
\begin{align*}
\lefteqn{
\frac{1}{t^{n+s+1}}
\int_{{\mathbb R}^n \setminus Q_1^*}|y-c_1|^{s+1}|{\mathcal T}_\sigma(a_1,\ldots,a_m)(y)|\,dy
}\\
&\lesssim
\frac{1}{t^{n+s+1}}
\int_{{\mathbb R}^n \setminus Q_1^*}
|y-c_1|^{s+1}
\frac{\ell(Q_1)^{n+N+1}}{|y-c_1|^{n+N+1}}
\prod_{l=2}^m |T_{\sigma_l}a_l(y)|\,dy.
\end{align*}
By dyadic decomposition of ${\mathbb R}^n \setminus Q_1^*$ as in the proof of Lemma \ref{lm.4A00}, we can estimate
\begin{align*}
&\frac{1}{t^{n+s+1}}
\int_{{\mathbb R}^n \setminus Q_1^*}|y-c_1|^{s+1}|{\mathcal T}_\sigma(a_1,\ldots,a_m)(y)|\,dy
\\
&\lesssim
\frac{\ell(Q_1)^{s+1}}{t^{n+s+1}}
\sum_{j=1}^\infty
2^{j(s-N-n)}
\int_{2^{j}Q_1^*}
\chi_{2^{j}Q_1^*}(y)
\prod_{l=2}^m |T_{\sigma_l}a_l(y)|\,dy\\
&\lesssim
\frac{\ell(Q_1)^{n+s+1}}{t^{n+s+1}}
\sum_{j=1}^\infty
2^{j(s-N)}
\prod_{l=2}^m
\Big(
\frac{1}{|2^jQ_1^*|}\int_{2^jQ_1^*}|T_{\sigma_l}a_l(y)|^m\,dy
\Big)^{\frac1m}\\
&\lesssim
\frac{\ell(Q_1)^{n+s+1}}{t^{n+s+1}}
\sum_{j=1}^\infty
2^{j(s-N)}
\prod_{l=2}^m
\Big(
\inf_{z\in 2^jQ_1^*}M\chi_{2^jQ_l^{**}}(z)^{\frac{n+N+1}{mn}}
+
\inf_{z\in 2^jQ_1^*}
M^{(m)} \circ T_{\sigma_l}(a_l)(z)
\chi_{2^{j+1}Q_l^{**}}(z)
\Big),
\end{align*}
where we used \eqref{eq.3D4} in the last inequality.

We now repeat the argument in establishing \eqref{eq.4E00} to obtain
\begin{align*}
&\frac{1}{t^{n+s+1}}
\int_{{\mathbb R}^n \setminus Q_1^*}|y-c_1|^{s+1}|{\mathcal T}_\sigma(a_1,\ldots,a_m)(y)|\,dy
\\
&\lesssim
\frac{\ell(Q_1)^{n+s+1}}{t^{n+s+1}}
\prod_{l=1}^m
\inf_{z\in Q_1^*}M\chi_{Q_l}(z)^{\frac{n+N+1}{mn}}
\left(
1+M^{(m)} \circ T_{\sigma_l}(a_l)(z)
\right).
\end{align*}

Moreover, the assumption 
$x\notin Q_1^{**}$ and 
$c_1\in B(x,100n^2t)$ implies 
$
\frac{\ell(Q_1)}{t}\lesssim M\chi_{Q_1}(x)^{\frac1n}. 
$
Therefore,
\begin{align*}
&
\frac{1}{t^{n+s+1}}
\int_{{\mathbb R}^n \setminus Q_1^*}|y-c_1|^{s+1}|{\mathcal T}_\sigma(a_1,\ldots,a_m)(y)|\,dy\\
&\lesssim
M\chi_{Q_1}(x)^{\frac{n+s+1}{n}}
\prod_{l=1}^m
\inf_{z\in Q_1^*}M\chi_{Q_l}(z)^{\frac{n+N+1}{mn}}
\left(
1+M^{(m)} \circ T_{\sigma_l}(a_l)(z)
\right).
\end{align*}
This proves
Lemma \ref{lem:160722-4}.
\end{proof}

\begin{lemma}
\label{lm.4B02}
For all $x\in \R^n$, we have
\begin{align*}
M_\phi\circ \mathcal{T}_\sigma(a_1,\ldots,a_m)(x)
\lesssim&
\prod_{l=1}^m
M\chi_{Q_l}(x)^{\frac{n+N+1}{mn}}
\left(
1+M^{(m)} \circ T_{\sigma_l}(a_l)(x)
\right)\\
&+
M\chi_{Q_1}(x)^{\frac{n+s+1}{n}}
\prod_{l=1}^m
\inf_{z\in Q_1^*}M\chi_{Q_l}(z)^{\frac{n+N+1}{mn}}
\left(
1+M^{(m)} \circ T_{\sigma_l}(a_l)(z)
\right).
\end{align*}
\end{lemma}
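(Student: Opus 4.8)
The plan is to combine the four preceding lemmas in this section according to the size of $t$ relative to $\ell(Q_1)$ and the position of $c_1$ relative to the ball $B(x,100n^2t)$, exactly mirroring the structure of the proof of Lemma \ref{lm.3A10}(3) in the Coifman--Meyer case. Fix $x\in\R^n$ and a scale $t\in(0,\infty)$; we must bound $|\int_{\R^n}\phi_t(x-y)\mathcal{T}_\sigma(a_1,\ldots,a_m)(y)\,dy|$ uniformly in $t$ and then take the supremum.

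First I would dispose of the case $x\in Q_1^{**}$: here Lemma \ref{lm.4A00} already gives the desired bound by the first product on the right-hand side (with the $\inf$-term only making the estimate larger, so it may simply be added). So assume $x\notin Q_1^{**}$ and split into the two subcases used throughout the section. If $c_1\notin B(x,100n^2t)$, then since $\phi$ is supported in the unit ball we have
\[
\Big|\int_{\R^n}\phi_t(x-y)\mathcal{T}_\sigma(a_1,\ldots,a_m)(y)\,dy\Big|
\lesssim \frac{1}{t^n}\int_{B(x,t)}|\mathcal{T}_\sigma(a_1,\ldots,a_m)(y)|\,dy,
\]
and Lemma \ref{lm.4F01} bounds this by the first product on the right-hand side. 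If instead $c_1\in B(x,100n^2t)$, I would invoke the cancellation condition \eqref{eq.TmCan} for $\mathcal{T}_\sigma$: replacing $\phi_t(x-\cdot)$ by the remainder $\delta_1^s(t;x,\cdot)$ of its degree-$s$ Taylor expansion at $c_1$ (as in \eqref{eq.3C01}), using $|\delta_1^s(t;x,y)|\lesssim t^{-n-s-1}|y-c_1|^{s+1}$, and splitting the integral over $Q_1^*$ and over $\R^n\setminus Q_1^*$. The first piece is controlled by Lemma \ref{lem:160722-3} and the second by Lemma \ref{lem:160722-4}; both contribute the second product on the right-hand side (and are dominated, up to constants, by terms already present since $M^{(m)}\circ T_{\sigma_l}(a_l)(z)\ge 0$ and $\inf_{z\in Q_1^*}\le$ the value at any fixed point). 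Taking the supremum over $t>0$ and combining the subcases yields the claimed inequality.

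The main obstacle is purely bookkeeping: one must check that the cancellation condition \eqref{eq.TmCan} is genuinely applicable here — that is, that $\mathcal{T}_\sigma(a_1,\ldots,a_m)$ is integrable against $(c_1-y)^\alpha$ for $|\alpha|\le s$, which follows from the decay estimate of Lemma \ref{lm.3A1} applied with $\Lambda=\{1\}$ (giving decay $|y-c_1|^{-n-N-1}$ with $N\gg s$) for $y$ outside $Q_1^*$ together with local integrability of $\mathcal{T}_\sigma(a_1,\ldots,a_m)$ inside $Q_1^*$ — the latter being where the entire sum in \eqref{eq.CalZygOPT-2} (not just the single reduced term \eqref{eq.4A02}) is needed, since cancellation is a property of the full operator. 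Once that is in place, everything reduces to citing the four lemmas with the correct ranges of $t$, and no new estimate is required.
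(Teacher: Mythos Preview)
Your proposal is correct and follows essentially the same approach as the paper: case $x\in Q_1^{**}$ handled by Lemma~\ref{lm.4A00}, case $x\notin Q_1^{**}$ split according to whether $c_1\in B(x,100n^2t)$, invoking Lemma~\ref{lm.4F01} in one subcase and, via the Taylor remainder argument \eqref{eq.3C02}, Lemmas~\ref{lem:160722-3} and~\ref{lem:160722-4} in the other. Your added remarks on the integrability needed for \eqref{eq.TmCan} and on the role of the full sum versus the reduced form \eqref{eq.4A02} are accurate and make explicit points the paper leaves implicit.
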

\begin{proof}
If $x\in Q_1^{**}$, 
the desired estimate is a consequence of Lemma \ref{lm.4A00}.  
Fix $x\notin Q_1^{**}$.
To estimate 
$M_{\phi}\circ \mathcal{T}_{\sigma}(a_1,\ldots,a_m)(x)$, 
we need to examine
\[
\Big|
\int_{{\mathbb R}^n} \phi_t(x-y){\mathcal T}_\sigma(a_1,\ldots,a_m)(y)\,dy
\Big|
\]
for each $t\in (0,\infty)$. If $c_1 \notin B(x,100n^2t)$, then we make use of Lemma \ref{lm.4F01}; otherwise, when $c_1 \in B(x,100n^2t)$ we recall \eqref{eq.3C02} and then apply Lemma \ref{lem:160722-3} and \ref{lem:160722-4} to obtain the required estimate in Lemma \ref{lm.4B02}. This completes the proof of the lemma.
\end{proof}

\subsection{The proof of Proposition \ref{LM.Key-31} for the product type}
To process the proof of  \eqref{eq.PWEST}, we set
\begin{equation*}
A= \Big\|\sum_{k_1,\ldots,k_m=1}^\infty
\Big(\prod_{l=1}^m \lambda_{l,k_l}\Big)
 M_\phi \circ {\mathcal T}_\sigma(a_{1,k_1},\ldots,a_{m,k_m})\Big\|_{L^p}.
\end{equation*}
For each $\vec{k}=(k_1,\ldots,k_m)$, we recall $R_{\vec{k}}$, the smallest-length cube among $Q_{1,k_1},\ldots,Q_{m,k_m}$.

In view of Lemma \ref{lm.4B02}, 
we have 
\begin{equation}\label{161031-1}
A\lesssim 
B:=
\left\|
\sum_{k_1,\ldots, k_m=1}^\infty
\prod_{l=1}^m
\lambda_{l,k_l}
\left(M\chi_{Q_{l,k_l}}\right)^{\frac{n+N+1}{mn}}
\left(
1+M^{(m)} \circ T_{\sigma_l}(a_{l,k_l})
\right)
\right\|_{L^p}. 
\end{equation}
In fact, our assumption imposing on $s$ means 
$(n+s+1)p/n>1$ and hence we may employ the boundedness of $M$ 
to obtain 
\begin{align*}
A&\lesssim 
B+
\left\|
\sum_{k_1,\ldots,k_m=1}^\infty
\left(M\chi_{R_{\vec{k}}^*}\right)^{\frac{n+s+1}{n}}
\prod_{l=1}^m
\lambda_{l,k_l}
\inf_{z\in R_{\vec{k}}^*}
\bigg(M\chi_{Q_{l,k_l}}\bigg) ^{\frac{n+N+1}{mn}}
\left(
1+M^{(m)} \circ T_{\sigma_l}(a_{l,k_l})
\right)
\right\|_{L^p}\\
&\lesssim
B.
\end{align*}

So, our task is to estimate $B$. 
Here, 
we prepare the following lemma. 
\begin{lemma}\label{lm-161031-1}
Let $p \in(0,\infty)$ and $\alpha>\max{(1,p^{-1})}$.
Assume that $q \in (p,\infty] \cap [1,\infty]$.
Suppose that we are given
a sequence of cubes $\{Q_k\}_{k=1}^\infty$
and a sequence of non-negative $L^q$-functions $\{F_k\}_{k=1}^\infty$.
Then
\[
\Big\|
\sum_{k=1}^\infty (M\chi_{Q_k})^\alpha F_k
\Big\|_{L^p}
\lesssim
\Big\|
\sum_{k=1}^\infty \chi_{Q_k}
M^{(q)}F_k
\Big\|_{L^p}.
\]
\end{lemma}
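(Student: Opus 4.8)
The plan is to reduce this estimate to the already-established Lemma \ref{lm.2A00} (the Hu-Meng/Nakai-Sawano vector-valued inequality) combined with Lemma \ref{lm.2B00}. The key observation is that the left-hand side is exactly of the form appearing in Lemma \ref{lm.2A00} once we absorb the maximal-function weight $(M\chi_{Q_k})^\alpha$ into a dilated characteristic function via a Calder\'on-Zygmund stopping argument. First I would split the sum dyadically: for each $k$ and each integer $j\ge 0$, set $E_{k,j}=\{x: 2^{-j-1}<M\chi_{Q_k}(x)\le 2^{-j}\}$ for $j\ge 1$, together with $E_{k,0}=\{x:M\chi_{Q_k}(x)>1/2\}\supset Q_k$. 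On $E_{k,j}$ one has $(M\chi_{Q_k})^\alpha\le 2^{-j\alpha}$, and moreover by the weak-$(1,1)$ bound for $M$ the set $E_{k,0}\cup\cdots\cup E_{k,j}$ is contained in a cube $Q_k^{(j)}$ centered at the center of $Q_k$ with $|Q_k^{(j)}|\lesssim 2^{j}|Q_k|$, i.e.\ $Q_k^{(j)}\subset C 2^{j/n}Q_k$. Hence
\[
\sum_{k=1}^\infty (M\chi_{Q_k})^\alpha F_k
\le \sum_{j=0}^\infty 2^{-j\alpha}\sum_{k=1}^\infty \chi_{Q_k^{(j)}}F_k .
\]

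Next I would apply Lemma \ref{lm.2A00} to the inner sum over $k$, with the cubes $Q_k^{(j)}$ and the functions $F_k$, which are non-negative and in $L^q$ with $q\in(p,\infty]\cap[1,\infty]$ as required:
\[
\Big\|\sum_{k=1}^\infty \chi_{Q_k^{(j)}}F_k\Big\|_{L^p}
\lesssim \Big\|\sum_{k=1}^\infty
\Big(\tfrac{1}{|Q_k^{(j)}|}\int_{Q_k^{(j)}}F_k^q\Big)^{1/q}\chi_{Q_k^{(j)}}\Big\|_{L^p}.
\]
For $x\in Q_k^{(j)}$ the average $\big(\tfrac{1}{|Q_k^{(j)}|}\int_{Q_k^{(j)}}F_k^q\big)^{1/q}$ is dominated by $M^{(q)}F_k(y)$ for every $y\in Q_k$ (since $Q_k\subset Q_k^{(j)}$), so it is at most $\inf_{y\in Q_k}M^{(q)}F_k(y)$; combined with $\chi_{Q_k^{(j)}}\le (M\chi_{Q_k})^{0}\cdot\chi_{Q_k^{(j)}}$ and $Q_k^{(j)}\subset C2^{j/n}Q_k$, Lemma \ref{lm.2B00} applied with exponent $\gamma$ slightly larger than $\max(1,1/p)$ (which costs a further harmless factor $2^{jn\gamma}$, absorbed by taking $\alpha>\gamma$ as in the hypothesis $\alpha>\max(1,p^{-1})$ together with $N\gg s$ giving enough room) lets us replace $\chi_{Q_k^{(j)}}$ by $\chi_{Q_k}$. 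Using the quasi-triangle inequality for $\|\cdot\|_{L^p}$ (the $\ell^{\min(p,1)}$ version) and summing the geometric series in $j$ — which converges precisely because $\alpha$ exceeds the accumulated loss $\sim 2^{jn\gamma/n}=2^{j\gamma}$ — yields
\[
\Big\|\sum_{k=1}^\infty (M\chi_{Q_k})^\alpha F_k\Big\|_{L^p}
\lesssim \Big\|\sum_{k=1}^\infty \chi_{Q_k}M^{(q)}F_k\Big\|_{L^p},
\]
which is the claim.

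The main obstacle I anticipate is bookkeeping the exponents so that the $j$-sum converges: one must check that the total dilation loss incurred by (a) enlarging $Q_k$ to $Q_k^{(j)}$ inside Lemma \ref{lm.2A00} and (b) shrinking $Q_k^{(j)}$ back to $Q_k$ via Lemma \ref{lm.2B00} is strictly less than $2^{j\alpha}$; this is where the hypothesis $\alpha>\max(1,p^{-1})$ is used, choosing $\gamma$ with $\max(1,1/p)<\gamma<\alpha$. A secondary technical point is that in the non-locally-convex range $p<1$ one must use the $p$-subadditivity $\|\sum f_j\|_{L^p}^p\le\sum\|f_j\|_{L^p}^p$ rather than the triangle inequality when combining the dyadic pieces; this only changes the convergence condition to $\alpha p>\gamma p$, still guaranteed. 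Everything else is a routine application of the two cited lemmas and the elementary geometry of $M\chi_Q$.
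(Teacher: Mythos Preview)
Your approach is essentially the same as the paper's: both decompose $(M\chi_{Q_k})^\alpha$ dyadically (the paper via the pointwise bound $M\chi_Q\lesssim\sum_{j\ge0}2^{-jn}\chi_{2^jQ}$, you via level sets of $M\chi_{Q_k}$, which amounts to a reparametrization $j\leftrightarrow jn$), apply Lemma~\ref{lm.2A00} on the dilated cubes to replace $F_k$ by its $L^q$-average, then use $\chi_{2^jQ_k}\lesssim(2^{jn}M\chi_{Q_k})^\beta$ with $\max(1,1/p)<\beta<\alpha$ together with Lemma~\ref{lm.2B00} to shrink back to $\chi_{Q_k}$ and sum the resulting geometric series. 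Your write-up has a couple of harmless slips---the reference to ``$N\gg s$'' is extraneous to this lemma, and the line $\chi_{Q_k^{(j)}}\le(M\chi_{Q_k})^{0}\cdot\chi_{Q_k^{(j)}}$ should read $\chi_{Q_k^{(j)}}\lesssim 2^{j\gamma}(M\chi_{Q_k})^\gamma$---but the argument is correct.
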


\begin{proof}
By Lemma \ref{lm.2A00} and the fact that
\(
M\chi_{Q}
\lesssim 
\chi_{Q}+
\sum_{j=1}^{\infty}
2^{-jn}\chi_{2^{j}Q\setminus 2^{j-1}Q},
\)
we have
\begin{align*}
\left\|
\sum_{k=1}^\infty (M\chi_{Q_k})^\alpha F_k
\right\|_{L^p}
&\lesssim
\left\|
\sum_{j=0}^\infty
\sum_{k=1}^\infty
2^{-\alpha jn}
\chi_{2^jQ_k}F_k
\right\|_{L^p}\\
&\lesssim
\left\|
\sum_{j=0}^\infty
\sum_{k=1}^\infty
2^{-\alpha jn}
\chi_{2^jQ_k}
\left(
\frac{1}{|2^jQ_k|}
\int_{2^jQ_k}F_k(y)^qdy
\right)^\frac{1}{q}
\right\|_{L^p}.
\end{align*}
Choose $\alpha>\beta>\max(1,\frac1p)$ and observe the trivial estimate 
\[
\chi_{2^jQ_k}
\lesssim 
\left(
2^{jn}M\chi_{Q_k}
\right)^{\beta}.
\] 
Now, Lemma \ref{lm.2B00} gives
\begin{align*}
\left\|
\sum_{k=1}^\infty (M\chi_{Q_k})^\alpha F_k
\right\|_{L^p}
\lesssim&
\left\|
\sum_{j=0}^\infty
\sum_{k=1}^\infty
\chi_{Q_k}
\left(
\frac{2^{(\beta-\alpha)jqn}}{|2^jQ_k|}
\int_{2^jQ_k}F_k(y)^qdy
\right)^\frac{1}{q}
\right\|_{L^p}
\lesssim&
\Big\|
\sum_{k=1}^\infty \chi_{Q_k}
M^{(q)}F_k
\Big\|_{L^p},
\end{align*}
which yields the desired estimate.
\end{proof}
Lemma \ref{lm-161031-1} can be regard as a substitution 
of Lemma \ref{lm.2A00}.

Before applying Lemma \ref{lm-161031-1} to $B$, 
we observe 
\begin{align*}
B
\leq
\prod_{l=1}^m
\left\|
\sum_{k_l=1}^\infty
\lambda_{l,k_l}
\left(M\chi_{Q_{l,k_l}}\right)^{\frac{n+N+1}{mn}}
\left(
1+M^{(m)} \circ T_{\sigma_l}(a_{l,k_l})
\right)
\right\|_{L^{p_l}}.
\end{align*}
Then applying Fefferman-Stein's vector-valued inequality 
and Lemma \ref{lm-161031-1}, 
\begin{align*}
&\left\|
\sum_{k_l=1}^\infty
\lambda_{l,k_l}
\left(M\chi_{Q_{l,k_l}}\right)^{\frac{n+N+1}{mn}}
\left(
1+  M^{(m)} \circ T_{\sigma_l}(a_{l,k_l})
\right)
\right\|_{L^{p_l}}\\
&\lesssim 
\left\|
\sum_{k_l=1}^\infty
\lambda_{l,k_l}
  M \big( \chi_{Q_{l,k_l}^{**}}\big)^{\frac{n+N+1}{mn}}
\right\|_{L^{p_l}}
+
\left\|
\sum_{k_l=1}^\infty
\lambda_{l,k_l}
\left(M\chi_{Q_{l,k_l}}\right)^{\frac{n+N+1}{mn}}
M^{(m)} \circ T_{\sigma_l}(a_{l,k_l})
\right\|_{L^{p_l}}\\
&\lesssim
\left\|
\sum_{k_l=1}^\infty
\lambda_{l,k_l}
\chi_{Q_{l,k_l}}
\right\|_{L^{p_l}}
+
\left\|
\sum_{k_l=1}^\infty
\lambda_{l,k_l}
\chi_{Q_{l,k_l}^{**}}
M\circ M^{(m)} \circ T_{\sigma_l}(a_{l,k_l})
\right\|_{L^{p_l}}. 
\end{align*}
For the second term, 
we choose $q\in(m,\infty)$ and
employ Lemma \ref{lm.2A00}, 
and the boundedness of $M$ and $T_{\sigma_l}$ 
to have 
\begin{align*}
&\left\|
\sum_{k_l=1}^\infty
\lambda_{l,k_l}
\chi_{Q_{l,k_l}^{**}}
M\circ M^{(m)} \circ T_{\sigma_l}(a_{l,k_l})
\right\|_{L^{p_l}}\\
&\lesssim
\left\|
\sum_{k_l=1}^\infty
\lambda_{l,k_l}
\frac{\chi_{Q_{l,k_l}^{**}}}{|Q_{l,k_l}|^\frac{1}{q}}
\left\|M\circ M^{(m)} \circ T_{\sigma_l}(a_{l,k_l})\right\|_{L^q}^q
\right\|_{L^{p_l}}
\lesssim 
\left\|
\sum_{k_l=1}^\infty
\lambda_{l,k_l}
\chi_{Q_{l,k_l}}
\right\|_{L^{p_l}}.
\end{align*}
As a result, 
\[
A\lesssim B\lesssim 
\prod_{l=1}^m
\left\|
\sum_{k_l=1}^\infty
\lambda_{l,k_l}
\chi_{Q_{l,k_l}}
\right\|_{L^{p_l}}, 
\]
which completes the proof of Proposition 
\ref{LM.Key-31}.

%=============================================================%
\section{The mixed type}
%=============================================================%

In this section, we   prove Proposition \ref{LM.Key-31} for operators of type \eqref{eq.CalZygOPT-3}. The main techniques to deal with the operator $\mathcal{T}_\sigma$ of mixed type are combinations of two previous types. We now establish some necessary estimates for $\mathcal{T}_\sigma$.
For the mixed type, we need the following lemma 
which can be shown by   a way similar to that in Lemma \ref{lm.3A1}.\footnote{
The detailed proof is as follows.
Fix any $y\in 2^{j+1}Q_1^*\setminus 2^jQ_1^*$. 
Let us use a notation $K^1(y,y_1,\ldots, y_m)$ 
as in the proof of Lemma \ref{lm.3A1}. 
Then for any $y_l\in Q_l$, $l=1,\ldots,m$, 
we have 
\begin{align*}
|
K^1(y,y_1,\ldots,y_m)
|
&\lesssim 
\left(
\frac{
\ell(Q_1)
}
{
|y-y_1|
+
\sum_{l\in \Lambda_j}
|y-y_l|
+
\sum_{l\geq2}
|y-y_l|
}
\right)^{n+N+1}\\
&\lesssim
\left(
\frac{
\ell(Q_1)
}
{
|y-c_1|
+
\sum_{l\in \Lambda_j}
|y-c_l|
+
\sum_{l\geq2}
|y-y_l|
}
\right)^{n+N+1}.
\end{align*}
In fact, 
if $l\in \Lambda_j$, 
$2^jQ_1^{**}\cap 2^jQ_l^{**}=\emptyset$ 
and hence, 
$y\in 2^{j+1}Q_1^*$ means 
$|y-y_l|\sim |y-c_l|$ for all $y_l\in Q_l$ for such $l$. 
Of course, 
$|y-y_1|\sim |y-c_1|$ is clear since $y\notin 2^jQ_1^*$.
Using this kernel estimate, 
we may prove the desired estimate.
}

\begin{lemma}\label{lm-161112-1}
Let $\sigma$ be a Coifman-Meyer multiplier, 
$a_l$ be $(p_l,\infty)$-atoms supported on $Q_l$ for 
$1\leq l\leq m$. 
Assume $\ell(Q_1)=\min{\{\ell(Q_l):l=1,\ldots, m\}}$ 
and write 
$\Lambda_j=\{l=1,\ldots,m: 2^jQ_1^{**}\cap 2^jQ_l^{**}=\emptyset\}$. 
Then for any 
$y\in 2^{j+1}Q_1^*\setminus 2^jQ_1^*$ we have 
\[
|T_\sigma(a_1,\ldots, a_m)(y)|
\lesssim 
\left(
\frac{\ell(Q_1)}
{
|y-c_1|
+
\sum_{l\in \Lambda_j}
|y-c_l|}
\right)^{n+N+1}.
\] 
\end{lemma}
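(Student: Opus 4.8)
The plan is to mimic the proof of Lemma \ref{lm.3A1}, using the cancellation of the atom $a_1$ to introduce a Taylor-polynomial correction, and then to track carefully which factors $|y-y_l|$ in the kernel denominator can be replaced by the centered quantities $|y-c_l|$. Fix $y \in 2^{j+1}Q_1^* \setminus 2^j Q_1^*$ and recall from \eqref{eq.3A5} that, using $\int x^\alpha a_1 = 0$ for $|\alpha|\le N$, we may write
\[
T_\sigma(a_1,\ldots,a_m)(y)
=
\int_{({\mathbb R}^n)^m}
K^1(y,y_1,\ldots,y_m)\, a_1(y_1)\cdots a_m(y_m)\, d\vec{y},
\]
where $K^1$ is the kernel of \eqref{eq.3A6}. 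The key kernel estimate, coming from the bound $|\partial^\beta_y K| \lesssim (\sum_i |y-y_i|)^{-mn-|\beta|}$ applied to the $N$-th order Taylor remainder in the $y_1$-variable and the bound $|y-y_1|\sim |y-c_1|$ for $y_1 \in Q_1$ and $y \notin Q_1^* \subset 2^j Q_1^*$, is
\[
|K^1(y,y_1,\ldots,y_m)|
\lesssim
\ell(Q_1)^{N+1}
\Bigl(
|y-c_1| + \sum_{l=2}^m |y-y_l|
\Bigr)^{-mn-N-1}.
\]

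Next I would upgrade the $|y-y_l|$ factors for $l \in \Lambda_j$ to centered ones. If $l \in \Lambda_j$, then $2^j Q_1^{**} \cap 2^j Q_l^{**} = \emptyset$; since $\ell(Q_1) \le \ell(Q_l)$, this forces $|c_1 - c_l| \gtrsim 2^j \ell(Q_l) \gtrsim 2^j \ell(Q_1)$. For $y \in 2^{j+1}Q_1^*$ and $y_l \in Q_l$ one then has $|y - c_l| \gtrsim |c_1 - c_l| - |y - c_1| \gtrsim |c_1-c_l|$ (after absorbing the small term $|y-c_1| \lesssim 2^{j+1}\sqrt n\, \ell(Q_1)$), and likewise $|y - y_l| \sim |y - c_l|$ since $\operatorname{diam} Q_l \ll |y - c_l|$. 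Hence in the denominator we may replace $\sum_{l \in \Lambda_j} |y - y_l|$ by $\sum_{l \in \Lambda_j} |y - c_l|$, obtaining
\[
|K^1(y,y_1,\ldots,y_m)|
\lesssim
\ell(Q_1)^{N+1}
\Bigl(
|y-c_1| + \sum_{l \in \Lambda_j} |y-c_l| + \sum_{l \notin \Lambda_j \cup \{1\}} |y-y_l|
\Bigr)^{-mn-N-1}.
\]

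Finally I would integrate. Using $|a_l| \le \chi_{Q_l}$ for each $l$, integrating in the $n(|\Lambda_j^c|-1)$ free variables $y_l$ ($l \notin \Lambda_j$, $l \ne 1$) converts a denominator exponent of $mn+N+1$ into $|\Lambda_j|n + n + N + 1$ (each integration in a single $n$-dimensional variable lowers the exponent by $n$ while the remaining terms are bounded below by the surviving centered terms), and integrating in $y_1 \in Q_1$ contributes a factor $|Q_1| = \ell(Q_1)^n$. This gives
\[
|T_\sigma(a_1,\ldots,a_m)(y)|
\lesssim
\frac{\ell(Q_1)^{n+N+1}}
{\bigl(|y-c_1| + \sum_{l \in \Lambda_j} |y - c_l|\bigr)^{n+N+1}},
\]
which is the assertion. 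The main obstacle—and the only genuinely delicate point—is the passage from $|y-y_l|$ to $|y-c_l|$ for $l \in \Lambda_j$: one must verify that the disjointness hypothesis $2^j Q_1^{**}\cap 2^j Q_l^{**}=\emptyset$, together with $y$ lying in the annulus $2^{j+1}Q_1^* \setminus 2^j Q_1^*$ and $\ell(Q_1) \le \ell(Q_l)$, really does make $|y-c_1|$ negligible compared to $|c_1 - c_l|$, so that the triangle inequality can be run in the desired direction. Everything else is the routine Taylor-remainder and integration bookkeeping already carried out in Lemma \ref{lm.3A1}.
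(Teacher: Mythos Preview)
Your approach is the same as the paper's (it too points to the Taylor-remainder argument of Lemma~\ref{lm.3A1} and the observation that $|y-y_l|\sim|y-c_l|$ for $l\in\Lambda_j$), and the geometric justification you give for that comparability is correct. There is, however, a genuine bookkeeping gap in your final integration step.

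After your second kernel bound you have replaced $|y-y_l|$ by $|y-c_l|$ for $l\in\Lambda_j$, so the integrand no longer depends on those $y_l$. You then only integrate the variables $y_l$ with $l\notin\Lambda_j\cup\{1\}$ (plus $y_1$ over $Q_1$), obtaining a denominator exponent $|\Lambda_j|n+n+N+1$ --- yet your displayed conclusion has exponent $n+N+1$. The missing $|\Lambda_j|$ integrations in $y_l$, $l\in\Lambda_j$, each contribute a factor $|Q_l|$ that you have not accounted for. The cleanest repair, and the one the paper uses, is not to \emph{replace} $|y-y_l|$ by $|y-c_l|$ for $l\in\Lambda_j$ but to \emph{augment} the denominator: since $|y-y_l|\sim|y-c_l|$ on $Q_l$, one has
\[
|K^1(y,y_1,\ldots,y_m)|\lesssim \ell(Q_1)^{N+1}\Bigl(|y-c_1|+\sum_{l\in\Lambda_j}|y-c_l|+\sum_{l\ge 2}|y-y_l|\Bigr)^{-mn-N-1},
\]
keeping all the $|y-y_l|$ terms. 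Now drop $\chi_{Q_l}$ for $l\ge 2$ and integrate each $y_l$, $l\ge 2$, over all of $\mathbb{R}^n$; this lowers the exponent by $(m-1)n$ down to $n+N+1$, exactly as in Lemma~\ref{lm.3A1}. Alternatively, in your route you can rescue the computation by noting $|Q_l|=\ell(Q_l)^n\lesssim|y-c_l|^n$ for $l\in\Lambda_j$ (since $y\in 2^jQ_1^{**}$ forces $y\notin 2^jQ_l^{**}$), which absorbs the extra factors into the denominator.
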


\subsection{Fundamental estimates for the mixed type}
Let $a_k$ be $(p_k,\infty)$-atoms supported in $Q_k$ for all $1\le k\le m$.
Suppose $Q_1$ is the cube such that $\ell(Q_1) =  \min\{\ell(Q_k) : 1\le k\le m\}$.
For each $1\le g\le G$,
let $Q_{l(g)}$ be the smallest cube
among $\{Q_l\}_{l \in I_g}$ and let $m_g = |I_g|$ be the cardinality of $I_g$.
Then we have the following analogues to Lemmas \ref{lm.4A00}--\ref{lm.4B02}.
We write $m_g=\sharp I_g$ for each $g$.

\begin{lemma}
\label{lm.5A00}
For all $x\in Q_1^{**}$, we have
\begin{align}
\notag
&M_\phi\circ \mathcal{T}_\sigma(a_1,\ldots,a_m)(x)\chi_{Q_1^{**}}(x)\\
\label{eq.5A01}
&\lesssim
\prod_{g=1}^G
\left(
M\chi_{Q_{l(g)}}(x)^\frac{(n+N+1)m_g}{nm}
M^{(G)} \circ T_{\sigma_{I_g}}(\{a_{l,k_l}\}_{l\in I_g})(x)
+
\prod_{l\in I_g}
M\chi_{Q_l}(x)^\frac{n+N+1}{mn}
\right).
\end{align}
\end{lemma}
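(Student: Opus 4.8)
The plan is to adapt the proof of Lemma~\ref{lm.4A00} to the mixed-type operator by handling each factor $T_{\sigma_{I_g}}(\{a_l\}_{l\in I_g})$ as in the Coifman--Meyer case, while treating the outermost ``product over $g$'' structure as in the product-type case. Fix $x\in Q_1^{**}$. As in the earlier lemmas it suffices to bound $\frac{1}{t^n}\int_{B(x,t)}|\mathcal{T}_\sigma(a_1,\ldots,a_m)(y)|\,dy$ for each $t>0$, and by passing to a single $\rho$ we may assume $\mathcal{T}_\sigma(a_1,\ldots,a_m)=\prod_{g=1}^G T_{\sigma_{I_g}}(\{a_l\}_{l\in I_g})$. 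Apply H\"older's inequality with exponent $G$ to split this into a product over $g$ of terms $\bigl(\frac{1}{t^n}\int_{B(x,t)}|T_{\sigma_{I_g}}(\{a_l\}_{l\in I_g})(y)|^G\,dy\bigr)^{1/G}$.

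Next I would split the range of $t$ into $t\le \ell(Q_1)$ and $t>\ell(Q_1)$, exactly as in Lemma~\ref{lm.4A00}. For each fixed $g$ there is a smallest cube $Q_{l(g)}$ among $\{Q_l\}_{l\in I_g}$; note $\ell(Q_1)\le \ell(Q_{l(g)})$ since $Q_1$ is globally smallest. When $B(x,t)\cap Q_{l(g)}^{**}\ne\emptyset$ one controls the $g$-th factor directly by $M^{(G)}\circ T_{\sigma_{I_g}}(\{a_l\}_{l\in I_g})(x)$ together with a harmless power of $M\chi_{Q_{l(g)}}(x)\gtrsim 1$; when $B(x,t)\cap Q_{l(g)}^{**}=\emptyset$ one instead uses the decay estimate for the $|I_g|$-linear Coifman--Meyer kernel --- the analogue of Lemma~\ref{lm.3A1} applied within the block $I_g$ --- to gain $\prod_{l\in I_g}(\ell(Q_l)/(|x-c_l|+\ell(Q_l)))^{(n+N+1)/m}$, which is $\lesssim\prod_{l\in I_g}M\chi_{Q_l}(x)^{(n+N+1)/(mn)}$. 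For $t>\ell(Q_1)$ one replaces the average over $B(x,t)$ by $\frac{1}{|Q_1^*|}\int_{\mathbb{R}^n}$, splits into $Q_1^*$ and its complement, and on the complement performs the dyadic annulus decomposition using Lemma~\ref{lm-161112-1} to extract the decay in $|y-c_1|$ and in $|y-c_l|$ for $l\in\Lambda_j$; summing the geometric series (which converges since $N\gg n$) yields the same product form.

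Assembling the estimates for the two cases and the two sub-cases, for each $g$ one obtains the bound
\[
\Bigl(\tfrac{1}{t^n}\int_{B(x,t)}|T_{\sigma_{I_g}}(\{a_l\}_{l\in I_g})(y)|^G\,dy\Bigr)^{1/G}
\lesssim
M\chi_{Q_{l(g)}}(x)^{\frac{(n+N+1)m_g}{nm}}\,M^{(G)}\circ T_{\sigma_{I_g}}(\{a_l\}_{l\in I_g})(x)
+\prod_{l\in I_g}M\chi_{Q_l}(x)^{\frac{n+N+1}{mn}},
\]
where the exponent $\frac{(n+N+1)m_g}{nm}$ arises because the power of $M\chi_{Q_{l(g)}}$ coming from $\ell(Q_1)/t\lesssim M\chi_{Q_1}(x)^{1/n}$-type bounds, combined with $\ell(Q_1)\le\ell(Q_{l(g)})$ and the fact that $|I_g|=m_g$ atoms are bundled in block $g$, accumulates to that size. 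Taking the product over $g=1,\ldots,G$ and then the supremum over $t$ and over $\rho$ gives \eqref{eq.5A01}.

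The main obstacle I anticipate is bookkeeping the exponents so that the two competing mechanisms --- the maximal-function bound $M^{(G)}\circ T_{\sigma_{I_g}}$ valid near the small cube, and the kernel-decay bound valid away from it --- produce compatible powers of $M\chi_{Q_{l(g)}}$ and $\prod_{l\in I_g}M\chi_{Q_l}$ that can be multiplied across $g$ without losing summability; in particular one must check that $\frac{(n+N+1)m_g}{nm}$ and $\frac{n+N+1}{mn}$ are each large enough (which they are, since $N\gg s$) to absorb the dilations $2^{jn}$ picked up from $M\chi_{2^jQ}\lesssim 2^{jn}M\chi_Q$ in the annulus sum, and large enough that later, in the proof of Proposition~\ref{LM.Key-31}, Lemmas~\ref{lm.2B00}, \ref{lm.2A00}, and \ref{lm-161031-1} apply. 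The assumption \eqref{160902-1} enters precisely here: it guarantees that within each block $I_g$ there is an index $l$ with $p_l<\infty$, so that the maximal-function side $M^{(G)}\circ T_{\sigma_{I_g}}$ can be controlled on $L^{p_l}$ via the boundedness of the Coifman--Meyer operator, which is the ingredient that would fail if every atom in the block were an $(\infty,\infty)$-atom.
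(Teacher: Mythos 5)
Your proposal follows essentially the same route as the paper's proof: the same reduction to a single product $\prod_{g}T_{\sigma_{I_g}}(\{a_l\}_{l\in I_g})$, the same H\"older split with exponent $G$, the same dichotomy $t\le\ell(Q_1)$ versus $t>\ell(Q_1)$ with the sub-cases $B(x,t)\cap Q_{l(g)}^{**}$ empty or not, and the same dyadic-annulus argument via Lemma \ref{lm-161112-1} on $(Q_1^*)^c$. The only presentational difference is that you phrase the conclusion as a uniform per-$g$ bound, whereas in Case 2 the paper treats the block $I_1$ containing $Q_1$ asymmetrically --- it alone supplies the decay $2^{-j(n+N+1)}$ that absorbs the growth $2^{jm_g(n+N+1)/m}$ from the remaining blocks before the product over $g$ is reassembled --- but you identify exactly this mechanism, so the argument is sound.
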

\begin{proof}
Fix $x\in Q_1^{**}$. We need to estimate
\[
\Big|
\int_{{\mathbb R}^n} \phi_t(x-y){\mathcal T}_\sigma(a_1,\ldots,a_m)(y)\,dy
\Big|
\lesssim
\frac{1}{t^n}
\int_{B(x,t)}
|{\mathcal T}_\sigma(a_1,\ldots,a_m)(y)|\,dy
\]
for each $t\in (0,\infty)$.
Similar to the previous section, it is enough to consider the following form:
\begin{equation}\label{eq.CalZygOPT-31}
{\mathcal T}_\sigma(f_1,\ldots,f_m)
=
\prod_{g=1}^{G}
T_{\sigma_{I_g}}(\{f_l\}_{l \in I_g}),
\end{equation}
where $\{I_g\}_{g=1}^{G}$ is a partition
of $\{1,\ldots,m\}$ with $1\in I_1$. 
By the H\"older inequality, we have
\begin{equation}
\label{eq.4B07}
\frac{1}{t^n}
\int_{B(x,t)}|{\mathcal T}_\sigma(a_1,\ldots,a_m)(y)|\,dy
\lesssim
\prod_{g=1}^G
\Big(
\frac{1}{t^n}
\int_{B(x,t)}
|T_{\sigma_{I_g}}(\{a_l\}_{l \in I_g})(y)|^{G}dy
\Big)^{\frac1{G}}.
\end{equation}
For each $1\le g\le G$, we need to examine
\[
\Big(
\frac{1}{t^n}
\int_{B(x,t)}
|T_{\sigma_{I_g}}(\{a_l\}_{l \in I_g})(y)|^{G}dy
\Big)^{\frac1{G}}.
\]
We consider two cases as in the proof of Lemma \ref{lm.4A00}.

\textbf{Case 1: $t\le \ell(Q_{1})$}.
We observe that  
\begin{align*}
\frac{1}{t^n}
\int_{B(x,t)}|{\mathcal T}_\sigma(a_1,\ldots,a_m)(y)|\,dy
&\le
\prod_{g: B(x,t)\cap Q_{l(g)}^{**}\neq\emptyset}
\Big(
\frac{1}{t^n}
\int_{B(x,t)}
|T_{\sigma_{I_g}}(\{a_l\}_{l \in I_g})(y)|^{G}dy
\Big)^{\frac1{G}}\\
&\quad \times 
\prod_{g: B(x,t)\cap Q_{l(g)}^{**}=\emptyset}
\Big(
\frac{1}{t^n}
\int_{B(x,t)}
|T_{\sigma_{I_g}}(\{a_l\}_{l \in I_g})(y)|^{G}dy
\Big)^{\frac1{G}}.
\end{align*}
When $B(x,t)\cap Q_{l(g)}^{**}\neq\emptyset$, 
we see that
$x\in 3Q_{l(g)}^{**}$. 
This shows 
\begin{align}\label{161112-4}
\lefteqn{
\prod_{g: B(x,t)\cap Q_{l(g)}^{**}\neq\emptyset}
\Big(
\frac{1}{t^n}
\int_{B(x,t)}
|T_{\sigma_{I_g}}(\{a_l\}_{l \in I_g})(y)|^{G}dy
\Big)^{\frac1{G}}
}\\
\nonumber
&\lesssim 
\prod_{g: B(x,t)\cap Q_{l(g)}^{**}\neq\emptyset}
\chi_{3Q_{l(g)}^{**}}(x)
M^{(G)} \circ T_{\sigma_{I_g}}(\{a_l\}_{l\in I_g})(x).
\end{align}
When $B(x,t)\cap Q_{l(g)}^{**}=\emptyset$, 
we may use \eqref{eq.3D4}
to have 
\begin{equation}\label{161112-3}
\prod_{g:B(x,t)\cap Q_{l(g)}^{**}=\emptyset}
\Big(
\frac{1}{t^n}
\int_{B(x,t)}
|T_{\sigma_{I_g}}(\{a_l\}_{l \in I_g})(y)|^{G}dy
\Big)^{\frac1{G}}
\lesssim 
\prod_{g: B(x,t)\cap Q_{l(g)}^{**}=\emptyset}
\prod_{l\in I_g}
M\chi_{Q_l}(x)^\frac{n+N+1}{mn}.
\end{equation}
These two estimates \eqref{161112-4} and \eqref{161112-3} yield the desired estimate in the Case 1.

\textbf{Case 2: $t> \ell(Q_{1})$}. We split
\begin{align*}
\lefteqn{
\frac{1}{t^n}
\int_{B(x,t)}|{\mathcal T}_\sigma(a_1,\ldots,a_m)(y)|\,dy
}\\
&\lesssim
\frac{1}{|Q_1^*|}
\int_{Q_1^*}
|{\mathcal T}_\sigma(a_1,\ldots,a_m)(y)|\,dy
+
\frac{1}{|Q_1^*|}
\int_{(Q_1^*)^c}
|{\mathcal T}_\sigma(a_1,\ldots,a_m)(y)|\,dy.
\end{align*}
For the first term, 
(\ref{161112-3}) yields
\begin{align*}
&
\frac{1}{|Q_1^*|}
\int_{Q_1^*}
|{\mathcal T}_\sigma(a_1,\ldots,a_m)(y)|\,dy\\
&\lesssim
\prod_{g=1}^G
\left(
\prod_{l\in I_g}
M\chi_{Q_l}(x)^\frac{n+N+1}{mn}
+
\chi_{Q_{l(g)}^{**}}(x)
M^{(G)} \circ T_{\sigma_{I_g}}(\{a_l\}_{l\in I_g})(x)
\right).
\end{align*}
For the second term, 
by a dyadic  decomposition of $(Q_1^*)^c$, 
\begin{align*}
&\frac{1}{|Q_1^*|}
\int_{(Q_1^*)^c}
|{\mathcal T}_\sigma(a_1,\ldots,a_m)(y)|\,dy\\
&=
\sum_{j=0}^{\infty}
\frac{1}{|Q_1^*|}
\int_{2^{j+1}Q_1^*\setminus 2^jQ_1^*}
|T_{\sigma_{I_1}}(\{a_l\}_{l\in I_1})(y)|
\prod_{g\geq 2}
|T_{\sigma_{I_g}}(\{a_l\}_{l\in I_g})(y)|dy
=
\sum_{j=0}^\infty I_j.
\end{align*}
Now, we fix any $j$ and evaluate each $I_j$. 
Letting 
$\Lambda_j=\{l=1,\ldots,m: 2^jQ_1^{**}\cap 2^jQ_l^{**}=\emptyset\}$ 
and using 
Lemma \ref{lm-161112-1},  for  
$y\in2^{j+1}Q_1^*\setminus2^jQ_1^*$ we obtain
\begin{align*}
|T_{\sigma_{I_1}}(\{a_l\}_{l\in I_1})(y)|
&\lesssim 
\left(
\frac{\ell(Q_1)}{|y-c_1|+\sum_{l\in I_1\cap \Lambda_j}|y-c_l|}
\right)^{n+N+1}\\
&\lesssim
2^{-j(n+N+1)}
\left(
\frac{2^j\ell(Q_1)}{2^j\ell(Q_1)+\sum_{l\in I_1\cap \Lambda_j}|c_1-c_l|}
\right)^{n+N+1} .
\end{align*} 
We estimate this term further. 
If $l\in I_1\cap \Lambda_j$, 
$|c_1-c_l|\sim |x-c_l|$ since $x\in Q_1^{**}$.
On the other hand, 
if $l\in I_1\setminus \Lambda_j$, 
$\chi_{2^jQ_l^{**}}(x)=\chi_{Q_1^{**}}(x)=1$ 
since $x\in Q_1^{**}$.
So, 
we have 
\begin{align*}
|T_{\sigma_{I_1}}(\{a_l\}_{l\in I_1})(y)|
&\lesssim 
2^{-j(n+N+1)}
\prod_{l\in I_1\cap \Lambda_j}
\left(
\frac{2^j\ell(Q_l)}{|x-c_l|}
\right)^\frac{n+N+1}{m}
\prod_{l\in I_1\setminus \Lambda_j}
\chi_{2^jQ_l^{**}}(x)\\
&\lesssim 
2^{-j(n+N+1)}
\prod_{l\in I_1\setminus \{1\}}
M\chi_{2^jQ_l^{**}}(x)^\frac{n+N+1}{mn}\\
&\lesssim
2^{-j(n+N+1)}
2^{j\frac{n+N+1}{m}(m_1-1)}
\prod_{l\in I_1}
M\chi_{Q_l}(x)^\frac{n+N+1}{mn}.
\end{align*}
This and H\"{o}lder's inequality imply that 
\begin{equation}\label{161112-1}
I_j
\lesssim 
2^{-j(N+1)}
2^{j\frac{n+N+1}{m}(m_1-1)}
\prod_{l\in I_1}
M\chi_{Q_l}(x)^\frac{n+N+1}{mn}
\prod_{g\geq2}
\left(
\frac{1}{|2^jQ_1^*|}
\int_{2^jQ_1^*}
|T_{\sigma_{I_g}}(\{a_l\}_{l\in I_g})(y)|^Gdy
\right)^\frac{1}{G}. 
\end{equation}
In the usual way, we claim that 
\begin{align}\label{161112-2}
&\prod_{g\geq2}
\left(
\frac{1}{|2^jQ_1^*|}
\int_{2^jQ_1^*}
|T_{\sigma_{I_g}}(\{a_l\}_{l\in I_g})(y)|^Gdy
\right)^\frac{1}{G}\\
&\lesssim 
2^{j\frac{n+N+1}{m}(m-m_1)}
\prod_{g\geq2}
\left(
M\chi_{Q_{l(g)}}(x)^\frac{(n+N+1)m_g}{nm}
M^{(G)} \circ T_{\sigma_{I_g}}(\{a_{l,k_l}\}_{l\in I_g})(x)
+
\prod_{l\in I_g}
M\chi_{Q_l}(x)^\frac{n+N+1}{mn}
\right).\nonumber
\end{align}
To see this, we again consider two possibilities of $g$ for each $j$; 
$2^jQ_1^{**}\cap 2^jQ_{l(g)}^{**}\neq \emptyset$ or 
not. 
In the first case, 
we notice $x\in Q_1^*\subset 2^jQ_{l(g)}^{**}$ 
and that we defined $m_g=\sharp I_g$,
and hence
\begin{align*}
\lefteqn{
\left(
\frac{1}{|2^jQ_1^*|}
\int_{2^jQ_1^*}
|T_{\sigma_{I_g}}(\{a_l\}_{l\in I_g})(y)|^Gdy
\right)^\frac{1}{G} 
}\\
&\lesssim 
\chi_{2^jQ_{l(g)}^{**}}(x)
M^{(G)} \circ T_{\sigma_{I_g}}(\{a_l\}_{l\in I_g})(x)\\
&\lesssim
2^{j\frac{m_g(n+N+1)}{m}}
M\chi_{Q_{l(g)}}(x)^\frac{m_g(n+N+1)}{mn}
M^{(G)} \circ T_{\sigma_{I_g}}(\{a_l\}_{l\in I_g})(x).
\end{align*}
In the second case; 
$2^jQ_1^{**}\cap 2^jQ_{l(g)}^{**}=\emptyset$, 
we use \eqref{eq.3D4} to see 
\begin{align*}
\lefteqn{
\left(
\frac{1}{|2^jQ_1^*|}
\int_{2^jQ_1^*}
|T_{\sigma_{I_g}}(\{a_l\}_{l\in I_g})(y)|^Gdy
\right)^\frac{1}{G} 
}\\
&\lesssim 
\prod_{l\in I_g}
M\chi_{2^jQ_l^{**}}(x)^\frac{n+N+1}{mn}
\lesssim
2^{j\frac{m_g(n+N+1)}{m}}
\prod_{l\in I_g}
M\chi_{Q_l}(x)^\frac{n+N+1}{mn}.
\end{align*}
These two estimates 
yields \eqref{161112-2}. 
Inserting \eqref{161112-2} to \eqref{161112-1}, 
we arrive at 
\[
I_j\lesssim 
2^{-j(\frac{n+N+1}{m}-n)}
\prod_{g=1}^G
\left(
M\chi_{Q_{l(g)}}(x)^\frac{m_g(n+N+1)}{mn}
M^{(G)} \circ T_{\sigma_{I_g}}(\{a_l\}_{l\in I_g})(x)
+
\prod_{l\in I_g}
M\chi_{Q_l}(x)^\frac{n+N+1}{mn}
\right). 
\]
Taking $N$ sufficiently large, 
we can sum the above estimate
up over $j \in {\mathbb N}$ and get desired estimate.
This completes the proof of Lemma \ref{lm.5A00}
\end{proof}

\begin{lemma}\label{lm.5A02}
Assume
$x \notin Q_1^{**}$ and $c_1 \notin B(x,100n^2t)$.
Then %if we choose $B^1_{l,0}=b_{l,0}$ satisfying $(\ref{eq:160726-1})$ suitably,
%then
we have 
\begin{align*}
&\frac{1}{t^n}
\int_{B(x,t)}|{\mathcal T}_\sigma(a_1,\ldots,a_m)(y)|\,dy\\
&\lesssim 
\prod_{g=1}^G
\left(
M\chi_{Q_{l(g)}}(x)^\frac{m_g(n+N+1)}{mn}
M^{(G)} \circ T_{\sigma_{I_g}}(\{a_l\}_{l\in I_g})(x)
+
\prod_{l\in I_g}
M\chi_{Q_l}(x)^\frac{n+N+1}{mn}
\right). 
\end{align*}
\end{lemma}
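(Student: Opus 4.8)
The plan is to adapt the proof of Lemma \ref{lm.4F01} to the mixed-type setting, replacing the single auxiliary factors $T_{\sigma_l}(a_l)$ by the group factors $T_{\sigma_{I_g}}(\{a_l\}_{l\in I_g})$ and the exponent $1/m$ by $1/G$ in every H\"older decomposition. First I would reduce, as in \eqref{eq.CalZygOPT-31}, to a single product $\prod_{g=1}^G T_{\sigma_{I_g}}(\{a_l\}_{l\in I_g})$; this is legitimate since no cancellation across groups is invoked here. Then apply H\"older's inequality with exponent $G$ to bound
\[
\frac{1}{t^n}\int_{B(x,t)}|{\mathcal T}_\sigma(a_1,\ldots,a_m)(y)|\,dy
\lesssim
\prod_{g=1}^G
\Big(\frac{1}{t^n}\int_{B(x,t)}|T_{\sigma_{I_g}}(\{a_l\}_{l\in I_g})(y)|^G\,dy\Big)^{1/G}.
\]

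Next I would split the groups $g=1,\ldots,G$ according to whether $B(x,t)$ meets $Q_{l(g)}^{**}$ or not, exactly paralleling the $J_0$/$J_1$/"$l\notin J$" trichotomy of Lemma \ref{lm.4F01} but now at the level of the smallest cube $Q_{l(g)}$ of each group. For groups with $B(x,t)\cap Q_{l(g)}^{**}=\emptyset$ one is in a position to use the decay estimate from Lemma \ref{lm-161112-1} (with $j=0$, since $B(x,t)$ is far from $Q_1^*$ by the hypothesis $c_1\notin B(x,100n^2t)$), which gives a factor $\big(\ell(Q_{l(g)})/(|x-c_{l(g)}|+\ell(Q_{l(g)}))\big)^{n+N+1}$; raising to the power $m_g/m$ and using $M\chi_{Q}(x)\gtrsim (\ell(Q)/(|x-c|+\ell(Q)))^n$ together with $\ell(Q_1)\le\ell(Q_l)$ and the comparison $|x-c_l|\lesssim|x-c_1|$ argued as in the proof of \eqref{eq:161003-11}, this yields the term $\prod_{l\in I_g}M\chi_{Q_l}(x)^{(n+N+1)/(mn)}$ (the moment condition of $a_l$ for $l$ with $x\notin Q_l^{**}$ handling those factors, and $\chi_{Q_l^{**}}(x)\le M\chi_{Q_l}(x)^{(n+N+1)/(mn)}$ the ones with $x\in Q_l^{**}$). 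For groups with $B(x,t)\cap Q_{l(g)}^{**}\neq\emptyset$, one has $x\in 3Q_{l(g)}^{**}$, so the local $L^G$ average of $|T_{\sigma_{I_g}}(\{a_l\}_{l\in I_g})|$ over $B(x,t)$ is controlled by $M^{(G)}\circ T_{\sigma_{I_g}}(\{a_l\}_{l\in I_g})(x)$ times a characteristic function absorbed into $M\chi_{Q_{l(g)}}(x)^{m_g(n+N+1)/(mn)}$; this is the first term in the claimed bound.

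I would then combine the two families of groups: each group contributes either the "$M^{(G)}$" term or the "$\prod_{l\in I_g}M\chi$" term, so taking the product over all $g$ produces exactly the sum-over-choices bound in the statement (every group's contribution is dominated by the sum of its two alternatives, and the product of sums expands to the displayed product). A small technical point, as in Lemma \ref{lm.4F01}, is that when $B(x,t)$ meets $Q_{l(g)}^*$ one has $t\gtrsim\ell(Q_{l(g)})$, so the trivial estimate $(|Q_{l(g)}|/|B(x,t)|)^{1/G}\lesssim1$ of \eqref{eq.4E01} is available and keeps these factors bounded; for the group containing the index $1$ one uses that $Q_1^*\cap B(x,2t)=\emptyset$ (from $x\notin Q_1^{**}$ and $c_1\notin B(x,100n^2t)$) to get genuine decay there.

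The main obstacle I expect is bookkeeping rather than a genuine new idea: one must keep careful track of the powers $m_g/m$ arising from raising the group decay estimates to the H\"older power, verify that $\sum_g m_g=m$ makes the exponents match the target $(n+N+1)/(mn)$ per index and $m_g(n+N+1)/(mn)$ per group, and confirm that the comparisons $|x-c_l|\lesssim|x-c_1|$ and $\ell(Q_1)\le\ell(Q_l)$ propagate correctly inside each group $I_1\ni 1$ versus the other groups. Since Lemma \ref{lm-161112-1} already packages the kernel decay in precisely the form needed (with the set $\Lambda_j$), and Lemma \ref{lm.4F01} supplies the template for the near/far split, no step should require an estimate not already established; the proof is essentially a faithful transcription of Lemma \ref{lm.4F01} with $m\rightsquigarrow G$ at the outer level and group-smallest-cubes in place of individual cubes.
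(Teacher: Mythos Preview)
Your plan follows the template of Lemma~\ref{lm.4F01} closely, but there is a real gap in how you treat the ``$J_1$-type'' groups: those $g\ge2$ with $x\notin Q_{l(g)}^{**}$ yet $B(x,2t)\cap Q_{l(g)}^*\neq\emptyset$. Your assertion that $B(x,t)\cap Q_{l(g)}^{**}\neq\emptyset$ forces $x\in 3Q_{l(g)}^{**}$ is false without a size restriction such as $t\le\ell(Q_{l(g)})$; that restriction held in Case~1 of Lemma~\ref{lm.5A00} but is unavailable here. For such $g$ the trivial bound you quote from \eqref{eq.4E01} gives only $\lesssim 1$, which does not supply the small factor $M\chi_{Q_{l(g)}}(x)^{m_g(n+N+1)/(mn)}$ that the conclusion requires when $x$ is far from $Q_{l(g)}$.

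The paper closes this gap by \emph{not} treating the groups symmetrically. It bounds the factor for $I_1$ (the group containing the index~$1$) in $L^\infty$ on $B(x,t)$ via Lemma~\ref{lm.3A1} (not Lemma~\ref{lm-161112-1}, which is stated for annuli $2^{j+1}Q_1^*\setminus2^jQ_1^*$), obtaining the full decay $\big(\ell(Q_1)/(|x-c_1|+\ell(Q_1))\big)^{n+N+1}$; see~\eqref{eq.5B08}. Only the exponent $m_1(n+N+1)/m$ is spent on the indices $l\in I_1$; the surplus $(m-m_1)(n+N+1)/m$ is then transferred to each $g\in J_1$ through the comparison $|x-c_{l(g)}|\lesssim t\lesssim|x-c_1|$, exactly as in~\eqref{eq:161003-11}. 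Your ``raise each far group's decay to the power $m_g/m$'' accounting discards precisely this surplus, leaving nothing for the $J_1$ groups. To repair your argument, single out $I_1$, retain its full $(n+N+1)$ power of decay (this is still available after your symmetric H\"older step, since the $L^G$ average is dominated by the $L^\infty$ norm), and redistribute the excess to the $J_1$ groups before bounding those by $M^{(G)}\circ T_{\sigma_{I_g}}(x)$.
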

\begin{proof}
Fix $x\notin Q_1^{**}$ and $t>0$ such that $c_1\notin B(x,100n^2t).$ Let ${\mathcal T}_\sigma$ be the operator of type \eqref{eq.CalZygOPT-3}.
We may consider the  reduced form \eqref{eq.CalZygOPT-31} of $\mathcal{T}_{\sigma}$ and start from \eqref{eq.4B07}.
We define 
\[
J=
\{
g=2,\ldots, m: 
x\notin Q_{l(g)}^{**}
\},
\quad
J_0=
\{
g\in J: 
B(x,2t)\cap Q_{l(g)}^*=\emptyset
\},
\quad
J_1=J\setminus J_0
\]
and split the product as follows:
\begin{align*}
\frac{1}{t^n}
\int_{B(x,t)}|{\mathcal T}_\sigma(a_1,\ldots,a_m)(y)|\,dy
&\lesssim
\left\|
T_{\sigma_{I_1}}(\{a_l\}_{l\in I_1})
\chi_{B(x,t)}
\right\|_{L^\infty}
\prod_{l\in J_0}
\left\|
T_{\sigma_{I_g}}(\{a_l\}_{l\in I_g})
\chi_{B(x,t)}
\right\|_{L^\infty}\\
&\quad \times 
\prod_{g\in J_1}
\left(
\frac{1}{|B(x,t)|}
\int_{B(x,t)}
|T_{\sigma_{I_g}}(\{a_l\}_{l\in I_g})(y)|^Gdy
\right)^\frac{1}{G}\\
&\quad \times 
\prod_{g\in \{2,\ldots,G\} \setminus J}
\left(
\frac{1}{|B(x,t)|}
\int_{B(x,t)}
|T_{\sigma_{I_g}}(\{a_l\}_{l\in I_g})(y)|^Gdy
\right)^\frac{1}{G}\\
&=
{\rm I}
\times 
{\rm II}
\times 
{\rm III}
\times 
{\rm IV}.
\end{align*}
To estimate I, we further define the partition of $I_1$: 
\begin{align*}
I_1^0 
&= \{l\in I_1 : x\notin Q_l^{**}, B(x,2t)\cap Q_l^*=\emptyset\},
\quad
I_1^1 
= \{l\in I_1 : x\notin Q_l^{**}, B(x,2t)\cap Q_l^*\ne\emptyset\},\\
I_1^2 
&= I_1\setminus(I_1^0\cup I_1^1).
\end{align*}
Since $x\notin Q_1^{**}$ and $c_1\notin B(x,100n^2t)$, we can see that $1\in I_1^0$. 
From Lemma \ref{lm.3A1}, we deduce
\begin{align*}
&|T_{\sigma_{I_1}}(\{a_l\}_{l \in I_1})(y)|\lesssim
\frac{\ell(Q_1)^{n+N+1}}{(\sum_{l\in I_1^0}|y-c_l|)^{n+N+1}}
\lesssim
\frac{\ell(Q_1)^{n+N+1}}{(\sum_{l\in I_1^0}|x-c_l|)^{n+N+1}}
\\
&\lesssim
\Big(\frac{\ell(Q_1)}{|x-c_1|+\ell(Q_1)}\Big)^{(m-m_1)\frac{n+N+1}{m}}
\prod_{l\in I_1^0}
\Big(\frac{\ell(Q_l)}{|x-c_l|+\ell(Q_l)}\Big)^{\frac{n+N+1}{m}}
\prod_{l\in I_1^1}
\Big(\frac{\ell(Q_1)}{|x-c_1|+\ell(Q_1)}\Big)^{\frac{n+N+1}{m}}
\end{align*}
for all $y\in B(x,t)$, where $m_1=|I_1|$ is the cardinality of the set $I_1$.
As in the proof of Lemma  \ref{lm.4F01}  for the product type, if $x\notin Q_l^{**}$ and $B(x,2t)\cap Q_l^{*}\ne\emptyset$ then $|x-c_l|\lesssim t\lesssim |x-c_1|$. This observation implies
\[
\frac{\ell(Q_1)}{|x-c_1|+\ell(Q_1)}
\lesssim
\frac{\ell(Q_l)}{|x-c_l|+\ell(Q_l)}
\]
for all $l\in I_1^1$. Therefore, we can estimate
\[
|T_{\sigma_{I_1}}(\{a_l\}_{l \in I_1})(y)|
\lesssim
\Big(\frac{\ell(Q_1)}{|x-c_1|+\ell(Q_1)}\Big)^{(m-m_1)\frac{n+N+1}{m}}
\prod_{l\in I_1^0\cup I_1^1}M\chi_{Q_l}(x)^{\frac{n+N+1}{mn}}
\]
for all $y\in B(x,t)$. Obviously, $1\lesssim M\chi_{Q_l}(x)$ for all $l\in I_1^2$, and hence we have
\begin{equation}
\label{eq.5B08}
|T_{\sigma_{I_1}}(\{a_l\}_{l \in I_1})(y)|
\lesssim
\Big(\frac{\ell(Q_1)}{|x-c_1|+\ell(Q_1)}\Big)^{(m-m_1)\frac{n+N+1}{m}}
\prod_{l\in I_1}M\chi_{Q_l}(x)^{\frac{n+N+1}{mn}}
\end{equation}
for all $y\in B(x,t)$ which gives the estimate for I. 
For the third term III, we simply have 
\[
{\rm III}
\leq 
\prod_{g\in J_1}
M^{(G)} \circ T_{\sigma_{I_g}}(\{a_l\}_{l\in I_g})(x).
\]
So, we obtain 
\begin{align*}
{\rm I}\times {\rm III}
&\lesssim 
\prod_{l\in I_1}M\chi_{Q_l}(x)^\frac{n+N+1}{mn}
\prod_{g\in J_1}
\left(
\frac{\ell(Q_1)}
{
\ell(Q_1)+
|x-c_1|
}
\right)^\frac{m_g(n+N+1)}{m}
M^{(G)} \circ T_{\sigma_{I_g}}(\{a_l\}_{l\in I_g})(x)\\
&\lesssim 
\prod_{l\in I_1}M\chi_{Q_l}(x)^\frac{n+N+1}{mn}
\prod_{g\in J_1}
M\chi_{Q_{l(g)}}(x)^\frac{m_g(n+N+1)}{mn}
M^{(G)} \circ T_{\sigma_{I_g}}(\{a_l\}_{l\in I_g})(x), 
\end{align*}
since $g\in J_1$ implies 
$|x-c_{l(g)}|\lesssim|x-c_1|$. 
For the second term II, 
we use Lemma \ref{161112-1}
and an argument as for estimate for I to get 
\[
{\rm II}=
\prod_{l\in J_0}
\left\|
T_{\sigma_{I_g}}(\{a_l\}_{l\in I_g})
\chi_{B(x,t)}
\right\|_{L^\infty}
\lesssim 
\prod_{g\in J_0}
\prod_{l\in I_g}M\chi_{Q_l}(x)^\frac{n+N+1}{mn}.
\]
For the last term IV, we recall $g\notin J$ means $x\in Q_{l(g)}^{**}$ 
and hence, 
\[
{\rm IV}
\lesssim
\prod_{g\notin J}
M\chi_{Q_{l(g)}}(x)^\frac{m_g(n+N+1)}{mn}
M^{(G)} \circ T_{\sigma_{I_g}}(\{a_l\}_{l\in I_g})(x).
\]
Combining the estimates for I, II, III and IV, 
we complete the proof of Lemma \ref{lm.5A02}.
\end{proof}

\begin{lemma}
\label{lm.5C05}
Assume
$x \notin Q_1^{**}$ and $c_1 \in B(x,100n^2t)$.
Then we have 
\begin{align*}
\lefteqn{
\frac{\ell(Q_1)^{s+1}}{t^{n+s+1}}
\int_{Q_1^{*}}|{\mathcal T}_\sigma(a_1,\ldots,a_m)(y)|\,dy
}\\
&
\lesssim M\chi_{Q_1}(x)^{\frac{n+s+1}{n}}
\prod_{g=1}^G
\inf_{z\in Q_1^*} \!\!
\left[
M\chi_{Q_{l(g)}}(z)^\frac{m_g(n+N+1)}{mn}
M^{(G)} \circ T_{\sigma_{I_g}}(\{a_l\}_{l\in I_g})(z)
+
\prod_{l\in I_g}
M\chi_{Q_l}(z)^\frac{n+N+1}{mn}
\right] \!. 
\end{align*}
\end{lemma}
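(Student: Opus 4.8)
The proof runs parallel to that of Lemma~\ref{lem:160722-3} for the product type, with individual indices replaced by the groups $I_g$. As in the proof of Lemma~\ref{lm.5A00}, by the triangle inequality it suffices to treat a single summand, so we may assume $\mathcal{T}_\sigma$ has the reduced form \eqref{eq.CalZygOPT-31}, $\mathcal{T}_\sigma(f_1,\ldots,f_m)=\prod_{g=1}^G T_{\sigma_{I_g}}(\{f_l\}_{l\in I_g})$ with $1\in I_1$.

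First I would apply H\"older's inequality with exponent $G$ to the $G$ factors, together with $|Q_1^*|\approx\ell(Q_1)^n$, to obtain
\[
\frac{\ell(Q_1)^{s+1}}{t^{n+s+1}}\int_{Q_1^*}|\mathcal{T}_\sigma(a_1,\ldots,a_m)(y)|\,dy
\lesssim
\frac{\ell(Q_1)^{n+s+1}}{t^{n+s+1}}\prod_{g=1}^G
\left(\frac{1}{|Q_1^*|}\int_{Q_1^*}|T_{\sigma_{I_g}}(\{a_l\}_{l\in I_g})(y)|^{G}\,dy\right)^{1/G}.
\]
Then, for each fixed $g$, I would dominate the $g$-th average by the bracket appearing in the statement. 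Since $Q_1$ is the globally smallest cube, $\ell(Q_1)\le\ell(Q_{l(g)})$. If $Q_1^{**}\cap Q_{l(g)}^{**}=\emptyset$, applying \eqref{eq.3D4} to the $m_g$-linear operator $T_{\sigma_{I_g}}$ with $Q_0=Q_1$, $j=0$ and $r=G$ bounds the average by $\prod_{l\in I_g}\inf_{z\in Q_1^*}M\chi_{Q_l^{**}}(z)^{(n+N+1)/(m_g n)}$; since $M\chi_{Q_l^{**}}\lesssim M\chi_{Q_l}\le 1$ and $m_g\le m$, this is at most $\inf_{z\in Q_1^*}\prod_{l\in I_g}M\chi_{Q_l}(z)^{(n+N+1)/(mn)}$. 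If instead $Q_1^{**}\cap Q_{l(g)}^{**}\ne\emptyset$, then $Q_1^*\subset 3Q_{l(g)}^{**}$; the trivial inequality $\frac{1}{|Q_1^*|}\int_{Q_1^*}|F(y)|^{G}\,dy\lesssim M(|F|^{G})(z)$, valid for every $z\in Q_1^*$, gives the average $\lesssim\inf_{z\in Q_1^*}M^{(G)}\circ T_{\sigma_{I_g}}(\{a_l\}_{l\in I_g})(z)$, and since $M\chi_{Q_{l(g)}}(z)\gtrsim 1$ on $Q_1^*\subset 3Q_{l(g)}^{**}$ this is $\lesssim\inf_{z\in Q_1^*}M\chi_{Q_{l(g)}}(z)^{m_g(n+N+1)/(mn)}M^{(G)}\circ T_{\sigma_{I_g}}(\{a_l\}_{l\in I_g})(z)$. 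In either case the $g$-th average is controlled by
\[
\inf_{z\in Q_1^*}\left[\,M\chi_{Q_{l(g)}}(z)^{\frac{m_g(n+N+1)}{mn}}M^{(G)}\circ T_{\sigma_{I_g}}(\{a_l\}_{l\in I_g})(z)+\prod_{l\in I_g}M\chi_{Q_l}(z)^{\frac{n+N+1}{mn}}\,\right],
\]
because each of the two displayed alternatives is bounded by one of the two nonnegative terms inside the bracket.

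Finally I would invoke the hypotheses: exactly as in Lemma~\ref{lem:160722-3}, from $x\notin Q_1^{**}$ and $c_1\in B(x,100n^2t)$ one gets $\ell(Q_1)/t\lesssim M\chi_{Q_1}(x)^{1/n}$, hence $\ell(Q_1)^{n+s+1}/t^{n+s+1}\lesssim M\chi_{Q_1}(x)^{(n+s+1)/n}$. Multiplying the per-group bounds over $g=1,\ldots,G$ and inserting this factor yields the asserted inequality.

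I do not expect a genuine obstacle: the argument is a routine adaptation of Lemma~\ref{lem:160722-3}, and the decay estimate of Lemma~\ref{lm-161112-1} for $T_{\sigma_{I_1}}$ away from $Q_1^*$ plays no role here, since the integration is only over $Q_1^*$. The one point needing care is the case analysis in the middle step --- verifying that the dichotomy of whether $Q_1^{**}$ meets $Q_{l(g)}^{**}$ is precisely what permits one to invoke \eqref{eq.3D4} for the sub-operator $T_{\sigma_{I_g}}$ versus the trivial maximal-function bound, and that the indicator functions of fixed dilates of $Q_{l(g)}^{**}$ can be absorbed into positive powers of $M\chi_{Q_{l(g)}}$.
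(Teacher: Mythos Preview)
Your argument is correct and matches the approach the paper intends. The paper omits the proof, saying only that it is ``very similar to those of Lemma~\ref{lm.5A00}''; your write-up supplies precisely the expected details---H\"older in the group index $g$, the dichotomy on whether $Q_1^{**}$ meets $Q_{l(g)}^{**}$ (invoking \eqref{eq.3D4} for $T_{\sigma_{I_g}}$ in the disjoint case and the trivial maximal bound together with $Q_1^*\subset 3Q_{l(g)}^{**}$ in the other), and the factor $\ell(Q_1)^{n+s+1}/t^{n+s+1}\lesssim M\chi_{Q_1}(x)^{(n+s+1)/n}$ exactly as in Lemma~\ref{lem:160722-3}.
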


\begin{lemma}
\label{lm.5C06}
Assume
$x \notin Q_1^{**}$ and $c_1 \in B(x,100n^2t)$.
Then we have 
\begin{align*}
\lefteqn{
\frac{1}{t^{n+s+1}}
\int_{(Q_1^{*})^c}|y-c_1|^{s+1}|{\mathcal T}_\sigma(a_1,\ldots,a_m)(y)|\,dy
}\\
&\quad
\lesssim M\chi_{Q_1}(x)^{\frac{n+s+1}{n}}
\prod_{g=1}^G
\inf_{z\in Q_1^*} \!\!
\left[
M\chi_{Q_{l(g)}}(z)^\frac{m_g(n+N+1)}{mn}
M^{(G)} \circ T_{\sigma_{I_g}}(\{a_l\}_{l\in I_g})(z)
+
\prod_{l\in I_g}
M\chi_{Q_l}(z)^\frac{n+N+1}{mn}
\right]\! . 
\end{align*}
\end{lemma}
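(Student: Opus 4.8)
The plan is to carry out, in the mixed setting, the dyadic-annulus argument used for the product type in the proof of Lemma~\ref{lem:160722-4}. As usual it is enough to treat one summand of~\eqref{eq.CalZygOPT-3}, i.e.\ the reduced operator~\eqref{eq.CalZygOPT-31}, $\mathcal{T}_\sigma=\prod_{g=1}^{G}T_{\sigma_{I_g}}(\{f_l\}_{l\in I_g})$ with $1\in I_1$. From $x\notin Q_1^{**}$ and $c_1\in B(x,100n^2t)$ one first reads off $\ell(Q_1)\lesssim t$, hence $2^jQ_1^{**}\subset B(x,C2^jt)$ for every $j\ge0$ and, most importantly, $\ell(Q_1)/t\lesssim M\chi_{Q_1}(x)^{1/n}$; the last bound will eventually produce the prefactor $M\chi_{Q_1}(x)^{(n+s+1)/n}$. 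One then decomposes $(Q_1^*)^c$ into the dyadic annuli $A_j=2^{j+1}Q_1^{**}\setminus 2^jQ_1^{**}$, $j\ge0$, together with the bounded layer $Q_1^{**}\setminus Q_1^*$, which is handled like the $j=0$ piece; on $A_j$ one has $|y-c_1|\approx 2^j\ell(Q_1)$.

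The heart of the matter is the pointwise bound for the distinguished factor $T_{\sigma_{I_1}}$ on $A_j$. Set $\Lambda_j=\{l\in I_1:\ 2^jQ_1^{**}\cap 2^jQ_l^{**}=\emptyset\}$. Applying Lemma~\ref{lm-161112-1} to the $m_1$-linear operator $T_{\sigma_{I_1}}$ (whose smallest atom-support is $Q_1$, as $1\in I_1$) and then repeating the $\Lambda_j$-analysis from the proof of Lemma~\ref{lm.5A00}, with the single change that, $x$ no longer lying in $Q_1^{**}$, each evaluation at $x$ is replaced by a uniform estimate over $z\in Q_1^*$—legitimate because $|z-c_l|\lesssim|c_1-c_l|$ for all $z\in Q_1^*$ when $l\in\Lambda_j$, while $Q_1^*\subset c\,2^jQ_l^{**}$ when $l\in I_1\setminus\Lambda_j$—one arrives, for $y\in A_j$, at
\[
|T_{\sigma_{I_1}}(\{a_l\}_{l\in I_1})(y)|\ \lesssim\ 2^{-j(n+N+1)+Cj}\ \inf_{z\in Q_1^*}\ \prod_{l\in I_1}M\chi_{Q_l}(z)^{\frac{n+N+1}{mn}},
\]
and the right-hand factor is dominated by the $g=1$ term of the product in the statement (that term being a sum of two nonnegative quantities, one of which is exactly $\prod_{l\in I_1}M\chi_{Q_l}(z)^{\frac{n+N+1}{mn}}$). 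I expect this to be the main obstacle. Unlike in the product type—where $I_1$ is a singleton and the corresponding factor is automatically $\gtrsim1$—the product $\prod_{l\in I_1}M\chi_{Q_l}(z)$ may here be genuinely small, so the whole $g=1$ factor has to be generated honestly from the refined kernel estimate of Lemma~\ref{lm-161112-1}; the crude bound $|T_{\sigma_{I_1}}(\{a_l\}_{l\in I_1})(y)|\lesssim(\ell(Q_1)/|y-c_1|)^{n+N+1}$ coming from Lemma~\ref{lm.3A1} with $\Lambda=\{1\}$ does not suffice.

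The remaining factors are treated as in Lemma~\ref{lm.5A00}. By H\"older's inequality with exponent $G$, $\int_{2^{j+1}Q_1^{**}}\prod_{g\ge2}|T_{\sigma_{I_g}}(\{a_l\}_{l\in I_g})(y)|\,dy$ is bounded by $|2^{j+1}Q_1^{**}|\prod_{g\ge2}\bigl(|2^{j+1}Q_1^{**}|^{-1}\int_{2^{j+1}Q_1^{**}}|T_{\sigma_{I_g}}(\{a_l\}_{l\in I_g})|^G\bigr)^{1/G}$. For each $g\ge2$ one estimates the corresponding average by applying~\eqref{eq.3D4} with $T_{\sigma_{I_g}}$ in place of $\mathcal{T}_\sigma$ and auxiliary cube $Q_0=Q_1$ (admissible since $\ell(Q_1)\le\ell(Q_{l(g)})$) in case $2^{j+1}Q_1^{**}\cap 2^{j+1}Q_{l(g)}^{**}=\emptyset$, and by dominating the average by $M^{(G)}\!\circ T_{\sigma_{I_g}}(\{a_l\}_{l\in I_g})(z)$ at a point $z\in 2^{j+1}Q_1^{**}$ (using $Q_1^*\subset c\,2^{j+1}Q_{l(g)}^{**}$) otherwise. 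Combining the two cases through a sum and $\inf_{z\in Q_1^*}$, downgrading exponents by $M\chi\le1$ and $m_g^{-1}\ge m^{-1}$, and passing from $2^{j+1}Q_l^{**},\,2^{j+1}Q_1^*$ to $Q_l,\,Q_1^*$ (the latter legitimate because $Q_1^*\subset 2^{j+1}Q_1^*$) at the cost of a factor $2^{Cj}$, this produces, for each $g\ge2$, the $g$-th factor of the stated product times $2^{Cj}$.

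Finally one multiplies, over $A_j$, the contribution $|y-c_1|^{s+1}\approx(2^j\ell(Q_1))^{s+1}$, the bound for $T_{\sigma_{I_1}}$, and the bound just obtained for $\int_{2^{j+1}Q_1^{**}}\prod_{g\ge2}|T_{\sigma_{I_g}}(\{a_l\}_{l\in I_g})|$ (which carries the volume factor $|2^{j+1}Q_1^{**}|\approx(2^j\ell(Q_1))^n$). Collecting powers of $2^j$, the annulus $A_j$ contributes $\lesssim 2^{j[(n+s+1)-\frac{n+N+1}{m}]}\,\ell(Q_1)^{n+s+1}\prod_{g=1}^G\inf_{z\in Q_1^*}[\,\cdots\,]$ to $\int_{(Q_1^*)^c}|y-c_1|^{s+1}|\mathcal{T}_\sigma(a_1,\dots,a_m)(y)|\,dy$; since the choice $N=m(n+1+2s)$ makes the exponent $(n+s+1)-\frac{n+N+1}{m}$ strictly negative (the growth constant $C$ is linear in $N$ with slope $\tfrac{m-1}{m}<1$, so the decay $-\tfrac{n+N+1}{m}$ prevails), the geometric series over $j\ge0$ converges, and dividing by $t^{n+s+1}$ and invoking $\ell(Q_1)/t\lesssim M\chi_{Q_1}(x)^{1/n}$ yields the asserted inequality.
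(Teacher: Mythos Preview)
Your proof is correct and follows essentially the approach the paper indicates: adapt Case~2 of the proof of Lemma~\ref{lm.5A00} (dyadic decomposition of $(Q_1^*)^c$, the refined kernel bound of Lemma~\ref{lm-161112-1} for $T_{\sigma_{I_1}}$, H\"older plus~\eqref{eq.3D4} for the factors $g\ge2$), inserting the extra weight $|y-c_1|^{s+1}$ and replacing evaluation at $x$ by $\inf_{z\in Q_1^*}$, exactly as Lemma~\ref{lem:160722-4} modifies Lemma~\ref{lm.4A00} in the product case. Your remark that Lemma~\ref{lm-161112-1} (and not merely the $\Lambda=\{1\}$ instance of Lemma~\ref{lm.3A1}) is required to produce the full $g=1$ factor is precisely the point, and your bookkeeping of the $2^j$-powers leading to the summable exponent $(n+s+1)-\tfrac{n+N+1}{m}$ is accurate.
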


The proof of Lemmas \ref{lm.5C05} and \ref{lm.5C06} are very similar to those of Lemma \ref{lm.5A00}, so we omit the details here.

\subsection{The proof of Proposition \ref{LM.Key-31} for the mixed type}
Employing the above lemmas, we   complete the proof of \eqref{eq.PWEST}. 
For each $\vec{k}=(k_1,\ldots, k_m)$, 
recall the smallest-length cube $R_{\vec{k}}$ among $Q_{1,k_1},\ldots, Q_{m,k_m}$ 
and write 
$Q_{l(g),\vec{k}(g)}$ for the  
 cube of smallest-length among 
$\{Q_{l,k_l}\}_{l\in I_g}$. 
Combining Lemmas \ref{lm.5A00}-\ref{lm.5C06}, 
we have the following pointwise estimate 
\begin{align*}
&M_\phi \circ {\mathcal T}_\sigma(a_{1,k_1},\ldots,a_{m,k_m})(x)
\lesssim
\prod_{g=1}^G
b_{g,\vec{k}(g)}(x)
+ 
M\chi_{R_{\vec{k}}^*}(x)^{\frac{n+s+1}{n}}
\prod_{g=1}^G
\inf_{z\in R_{\vec{k}}^*}
b_{g,\vec{k}(g)}(z),\\
&b_{g,\vec{k}(g)}(x)
=
%\left(
M\chi_{Q_{l(g),\vec{k}(g)}^*}(x)^\frac{m_g(n+N+1)}{mn}
M^{(G)} \circ T_{\sigma_{I_g}}(\{a_{l,k_l}\}_{l\in I_g})(x)
+
\prod_{l\in I_g}
M\chi_{Q_{l,k_l}}(x)^\frac{n+N+1}{mn}
%\right) 
\end{align*}
for all $x\in \R^n$.
As in the proof for the product type, we let 
\begin{equation*}
A= \Big\|\sum_{k_1,\ldots,k_m=1}^\infty
\Big(\prod_{l=1}^m \lambda_{l,k_l}\Big)
M_\phi \circ {\mathcal T}_\sigma(a_{1,k_1},\ldots,a_{m,k_m})\Big\|_{L^p}.
\end{equation*}
In view of $(n+s+1)p/n>1$, using Lemma \ref{lm.2B00} and H\"{o}lder's inequality, 
we see 
\begin{align*}
A
&\lesssim 
\prod_{g=1}^G
\bigg\|
\sum_{k_l\ge 1:l\in I_g}
\bigg(\prod_{l\in I_g}\lambda_{l,k_l}\bigg)
\bigg(
\Big(M\chi_{Q_{l(g),\vec{k}(g)}^*}\Big)^\frac{m_g(n+N+1)}{mn}
 M^{(G)} \circ T_{\sigma_{I_g}}(\{a_{l,k_l}\}_{l\in I_g})
 \\
& \qquad \qquad \qquad \qquad \qquad \qquad\qquad \qquad \qquad\qquad \qquad \qquad+
\prod_{l\in I_g}
(M\chi_{Q_{l,k_l}})^\frac{n+N+1}{mn}
\bigg)
\bigg\|_{L^{q_g}}\\ 
&\lesssim 
\prod_{g=1}^G
\Big(
A_{g,1}+
A_{g,2}
\Big), 
\end{align*}
where $q_g\in (0,\infty)$ is defined by 
$
1/q_g
=
\sum_{l\in I_g}
1/p_l
$  and  
\begin{align*}
A_{g,1}&=
\left\|
\sum_{k_l\ge 1 : l\in I_g}
\left(\prod_{l\in I_g}\lambda_{l,k_l}\right)
\Big(M\chi_{Q_{l(g),\vec{k}(g)}^*}\Big)^\frac{m_g(n+N+1)}{mn}
 M^{(G)}\circ T_{\sigma_{I_g}}(\{a_{l,k_l}\}_{l\in I_g})
\right\|_{L^{q_g}},\\ 
A_{g,2}&=
\left\|
\sum_{k_l\ge 1 : l\in I_g}
\prod_{l\in I_g}\lambda_{l,k_l}
(M\chi_{Q_{l,k_l}})^\frac{n+N+1}{mn}
\right\|_{L^{q_g}}.  
\end{align*}
For $A_{g,2}$, we have only to employ Lemma \ref{lm.2B00} to get 
the desired estimate. 
For $A_{g,1}$, take large $r$ and  
employ Lemma \ref{lm-161031-1} to obtain  
\[
A_{g,1}
\lesssim 
\left\|
\sum_{k_l\ge 1 : l\in I_g}
\left(\prod_{l\in I_g}\lambda_{l,k_l}\right)
\chi_{Q_{l(g),\vec{k}(g)}^*}
M^{(r)}\circ M^{(G)}
[T_{\sigma_{I_g}}(\{a_{l,k_l}\}_{l\in I_g})]
\right\|_{L^{q_g}}.
\]
Then it follows from Lemma \ref{lm.2A00} 
and \eqref{eq.3A3} 
that 
\begin{align*}
A_{g,1}
&\lesssim 
\left\|
\sum_{k_l\ge 1 : l\in I_g}
\left(\prod_{l\in I_g}\lambda_{l,k_l}\right)
\frac{\chi_{Q_{l(g),\vec{k}(g)}^*}}{|Q_{l(g),\vec{k}(g)}|^{1/q}}
\left\|
\chi_{Q_{l(g),\vec{k}(g)}^*}
M^{(r)}\circ M^{(G)}
[T_{\sigma_{I_g}}(\{a_{l,k_l}\}_{l\in I_g})]
\right\|_{L^q}
\right\|_{L^{q_g}}\\ 
&\lesssim 
\left\|
\sum_{k_l\ge 1 : l\in I_g}
\left(\prod_{l\in I_g}\lambda_{l,k_l}\right)
\chi_{Q_{l(g),\vec{k}(g)}^*}
\inf_{z\in Q_{l(g),\vec{k}(g)}^*}
\prod_{l\in I_g}
M\chi_{Q_l}(z)^\frac{n+N+1}{mn}
\right\|_{L^{q_g}}\\ 
&\leq
\prod_{l\in I_g}
\left\|
\sum_{k_l=1}^\infty
\lambda_{l,k_l}
(M\chi_{Q_l})^\frac{n+N+1}{mn}
\right\|_{L^{p_l}}
\lesssim 
\prod_{l\in I_g}
\left\|
\sum_{k_l=1}^\infty
\lambda_{l,k_l}
\chi_{Q_l}
\right\|_{L^{p_l}}, 
\end{align*}
which completes the proof of Lemma \ref{LM.Key-31} for for operators of mixed type.

%=============================================================%
\section{Examples}
%=============================================================%

We provide some examples of operators of the kinds discussed in this paper: all of the following 
are symbols of trilinear operators acting on functions on the real line, thus they are functions on 
$\mathbb R^3= \mathbb R\times \mathbb R\times \mathbb R$. 

The symbol 
$$
\sigma_1(\xi_1,\xi_2,\xi_3) = \frac{ (\xi_1+\xi_2+\xi_3)^2 }{ \xi_1^2+\xi_2^2+\xi_3^2} 
$$
is associated with an operator of type~\eqref{eq.CalZygOPT}.

The symbol 
\begin{align*}
\sigma_2 (\xi_1,\xi_2,\xi_3)
&= 
\frac{\xi_1^3}{(1+\xi_1^2)^{\frac 32}} \frac{1}{(1+\xi_2^2+\xi_3^2)^{\frac 32}} +
\frac{1}{(1+\xi_1^2)^{\frac 32}} \frac{\xi_2^3}{(1+\xi_2^2+\xi_3^2)^{\frac 32}} \\
&\quad
+\frac{1}{(1+\xi_1^2)^{\frac 32}} \frac{\xi_3^3}{(1+\xi_2^2+\xi_3^2)^{\frac 32}} 
- \frac{3\xi_1} {(1+\xi_1^2)^{\frac 32}} \frac{\xi_2\xi_3}{(1+\xi_2^2+\xi_3^2)^{\frac 32}} \\
&=
\frac{(\xi_1+\xi_2+\xi_3)(\xi_1^2+\xi_2^2+\xi_3^2-\xi_1\xi_2-\xi_2\xi_3-\xi_3\xi_1)}
{(1+\xi_1^2)^{\frac 32}(1+\xi_2^2+\xi_3^2)^{\frac 32}}
\end{align*}
provides an example of an operator of type ~\eqref{eq.CalZygOPT-3}. 
Note that each term is given as a product of a multiplier of 
$\xi_1$ times a multiplier of $(\xi_2,\xi_3)$.

The symbol 
\begin{eqnarray*}
\sigma_3 (\xi_1,\xi_2,\xi_3) &= & \frac{ \xi_1^4}{(1+\xi_1^2)^2} 
\frac{\xi_2^2}{(1+\xi_2^2)^2} \frac{\xi_3} { (1+\xi_3^2)^2} - 
 \frac{ \xi_1^4}{(1+\xi_1^2)^2} \frac{\xi_2}{(1+\xi_2^2)^2} 
\frac{ \xi_3^2} { (1+\xi_3^2)^2} \\
&& \quad- \frac{\xi_1^2}{(1+\xi_1^2)^2}\frac{ \xi_2^4}{(1+\xi_2^2)^2}
 \frac{\xi_3}{ (1+\xi_3^2)^2} +
 \frac{ \xi_1}{(1+\xi_1^2)^2} \frac{ \xi_2^4}{(1+\xi_2^2)^2} 
\frac{\xi_3^2} { (1+\xi_3^2)^2} \\
&& \quad+
\frac{\xi_1^2}{(1+\xi_1^2)^2}\frac{\xi_2} {(1+\xi_2^2)^2} 
\frac{ \xi_3^4}{(1+\xi_3 ^2)^2} 
 - \frac{\xi_1}{(1+\xi_1^2)^2}\frac{\xi_2^2} {(1+\xi_1^2)^2} \frac{ \xi_3^4}{(1+\xi_3 ^2)^2}\\
&=&-
\frac{\xi_1\xi_2\xi_3(\xi_1-\xi_2)(\xi_2-\xi_3)(\xi_3-\xi_1)
(\xi_1+\xi_2+\xi_3)}{(1+\xi_1^2)^2(1+\xi_2^2)^2(1+\xi_3^2)^2}
\end{eqnarray*}
yields an example of an operator of type ~\eqref{eq.CalZygOPT-2}. 
The next example:
\begin{eqnarray*}
\sigma_4 (\xi_1,\xi_2,\xi_3) &= &
\frac{\xi_1\xi_2}{\xi_1^2+\xi_2^2+(\xi_1+\xi_2)^2}
\cdot 1
-
\frac{\xi_1\xi_2}{\xi_1^2+\xi_2^2+\xi_3^2}
\end{eqnarray*}
shows that the integer $G(\rho)$ varies
according to $\rho$.
Notice that all four examples satisfy
$$
\sigma_1(\xi_1,\xi_2,\xi_3)=\sigma_2(\xi_1,\xi_2,\xi_3)
=\sigma_3(\xi_1,\xi_2,\xi_3)=\sigma_4(\xi_1,\xi_2,\xi_3)=0
$$
when $\xi_1+\xi_2+\xi_3=0$. 
This yields condition \eqref{eq.TmCan} when $s=0$; see \cite{Our-next-paper}.
For the case of $s\in \mathbb Z^+$,
we consider
$\sigma_1{}^{s+1},\sigma_2{}^{s+1},\sigma_3{}^{s+1}$,
for example.

\end{document}